\swapnumbers \theoremstyle{plain}
\newtheorem{theorem}{Theorem}[section]
\newtheorem{remark}[theorem]{Remark}
\newtheorem{corollary}[theorem]{Corollary}
\newtheorem{proposition}[theorem]{Proposition}
\newtheorem{lemma}[theorem]{Lemma}
\newtheorem{definition}[theorem]{Definition}
\theoremstyle{definition}
\newtheorem{example}[theorem]{\bf Example}
  \def\simto{\overset{\sim}{\longrightarrow}}
  \def\ip<#1>{\langle#1\rangle}   %inner product--write `\ip<contents>'
\newcommand{\al}{\alpha}  
\newcommand{\gam}{\gamma}  \newcommand{\lam}{\lambda}
 \renewcommand{\tilde}{\widetilde}
  \def\b1{\text{\bf\large 1}}
\newcommand{\Aut}{\operatorname{Aut}} 
\newcommand{\End}{\operatorname{End}} 
\newcommand{\Hom}{\operatorname{Hom}} \newcommand{\Imo}{\operatorname{Im}}
\newcommand{\perm}{\operatorname{perm}} 
 \newcommand{\GL}{\operatorname{GL}}
\newcommand{\SL}{\operatorname{SL}}
\newcommand{\Spec}{\operatorname{Spec}}
\newcommand{\beqn}{\begin{equation}} \newcommand{\eeqn}{\end{equation}}
  \newcommand{\fp}{\mathsf{p}}
 \newcommand{\fv}{\mathfrak{v}}
\newcommand{\bc}{\mathbb{C}}  
 \newcommand{\bz}{\mathbb{Z}}
\def\simto{\overset{\sim}{\longrightarrow}}
\newcommand{\cz}{\mathcal{Z}}
\newcommand{\cy}{\mathcal{Y}}
\newcommand{\cx}{\mathcal{X}}
\begin{document}

\title[Geometry of Permanents]{Geometry of orbits of permanents and determinants}
 \author[Shrawan Kumar]{Shrawan Kumar}
 \maketitle
 \section{Introduction}

 Let  ${\mathfrak v}$ be a complex vector space of dimension $m$  and let $E:={\mathfrak v}\otimes
{\mathfrak v}^* = \End {\mathfrak v}$.  Consider $\det\in Q:=S^m(E^*)$, where $\det$ is the function taking
determinant of any $X\in \End {\mathfrak v}$.
Fix a basis
 $\{ e_1, \dots, e_m \}$ of ${\mathfrak v}$ and a positive integer $n<m$ and
 consider the
 function $\fp\in Q$, defined by $\fp(X) =
x^{m-n}_{1,1} \perm (X^o),$ $X^o$ being the component of $X$ in the right
 down $n\times n$
corner, where any element of $\End \fv$ is represented by a $m\times m$-matrix
$X=(x_{i,j})_{1\leq i,j,\leq m}$ in the basis $\{e_i\}$ and perm denotes the
permanent. The group $G=\GL(E)$
canonically acts on $Q$. Let $\cx$ (resp. $\cy$) be the $G$-orbit closure of
det (resp. $\fp$) inside $Q$. Then, $\cx$ and $\cy$ are closed (affine)
subvarieties
of $Q$ which are stable under the standard homothecy action of $\bc^*$ on $Q$.
Thus, their affine coordinate rings $\bc[\cx]$ and $\bc[\cy]$ are
nonnegatively graded $G$-algebras over the complex numbers $\bc$. Clearly,
$\End E\cdot \det\subset \cx$, where $\End E$ acts on $Q$ via:
$(g\cdot q)(X)=q(g^t\cdot X)$, for $g\in \End E, q\in Q$ and
$X\in E$.

For any positive integer $n$, let $\bar{m}=\bar{m}(n)$ be the smallest
positive integer such that the permanent of any $n\times n$ matrix can be
realized as a linear projection of the determinant of a $\bar{m}\times \bar{m}$
matrix. This is equivalent to saying that $\fp \in \End E\cdot \det$ for the pair
$(\bar{m},n)$. Then,  Valiant conjectured that the function $\bar{m}(n)$ grows faster than
any polynomial in $n$ (cf. [V]).

Similarly, let $m=m(n)$ be the smallest integer such that $\fp\in \cx$
(for the pair $(m,n)$). Clearly, $m(n)\leq \bar{m}(n)$. Now,
Mulmuley-Sohoni strengthened Valiant's conjecture. They conjectured that,
in fact, the function $m(n)$ grows faster than
any polynomial in $n$ (cf. [MS1], [MS2] and the references therein). They further conjectured that
if $\fp\notin \cx$, then there exists an irreducible $G$-module which occurs in
$\bc[\cy]$ but does not occur in $\bc[\cx]$. (Of course, if $\fp\in \cx$,
then $\bc[\cy]$ is a $G$-module quotient of $\bc[\cx]$.) This Geometric Complexity Theory
programme initiated by Mulmuley-Sohoni provides a significant mathematical approach to
solving the Valiant's conjecture (in fact, strengthened version of
Valiant's conjecture proposed by them). In a recent paper, Landsberg-Manivel-Ressayre
[LMR] have shown that $m(n)\geq n^2/2$.

It may be remarked that, since $(\perm_n)_{n\geq 1}$ is $\bf{VNP}$-complete
(cf. [V]),  Valiant's above conjecture is equivalent to
$(\perm_n)_{n\geq 1}\notin \bf{VP}$. This is an algebraic version of
Cook's celebrated $\bf{P}\neq \bf{NP}$ conjecture. The conjecture of
Mulmuley-Sohoni is equivalent to $(\perm_n)_{n\geq 1}\notin
\overline{\bf{VP}_{ws}}$. For a survey of these problems, we refer to
the article [BL] by Burgisser-Landsberg-Manivel-Weyman.

 From the experience in representation theory (e.g., the Demazure character
 formula or the study of functions on the nilpotent cone), one important property of
 varieties which allows one to study the ring of regular functions on them
 is their {\em normality}. But, unfortunately, as we show in the paper,
 both of the varieties $\cx$ (for any $m\geq 3$) and $\cy$ (for any $m\geq 2n$ and
 $n\geq 3$) are {\em not} normal (cf. Theorems
 ~\ref{2.6} and ~\ref{7.4}).

 To prove the nonnormality of $\cx$, we study the defining equations of
 the boundary $\partial \cx:=\cx\setminus \cx^o$ and show that there exists
 a $G'$-invariant $f_o$ in $\bc[\cx]$ (where $G':=\SL(E)$), which defines  $\partial \cx$ set theoretically
 (but not scheme theoretically), cf. Corollaries ~\ref{2.5} and ~\ref{2.7}.
 In particular, each irreducible component of $\partial \cx$ is of
 codimension one in
 $\cx$ (cf. Corollary ~\ref{2.5}).
 To show that $\cx$ is not normal, we show that, in fact, the GIT quotient
 $\cx':=\cx//G'$ is not normal by analyzing the $G'$-invariants in
 $\bc[\cx]$.

 Let $\{ e_1^*, \dots,
 e_m^* \}$ be the dual basis of
${\mathfrak v}^*$. Then, of course, $\{e_{i,j}:=e_i\otimes e_j^*; 1
\leq i,j\leq m\}$ is a basis of $E$.
Let $S_1$ be the subspace of $E$  spanned by $\{e_{i,j}; m-n+1 \leq i,j\leq m
\}$, $S$ the subspace  of $E$  spanned by $S_1$ and
$e_{1,1}$ and $S^{\bot}$  the complementary
subspace spanned by the set $\{e_{i,j}\}_{1\leq i,j,\leq m}\setminus
\{e_{1,1}, e_{i,j}\}_{m-n+1 \leq i,j\leq m}$. Let $P$ be the maximal parabolic subgroup of
$G=\GL(E)$ which keeps the subspace $S^{\bot}$ of $E$ stable and let  $L_P$ be the Levi subgroup of $P$ defined by:
 $L_P=L_P^1\times L_P^2$, where $L_P^1:=\GL(S^{\bot})$ and $L_P^2:=\GL(S)$.
Let $R$ be the parabolic subgroup of $L_P^2$ which fixes the line
spanned by $e_{1,1}$.

 The proof of the nonnormality of $\cy$ is more involved. We first show that
 the $G$-module decomposition of $\bc[\cy]$ is equivalent to the $L_P^2$-module
 decomposition
 of the ring of the regular functions on the $L_P^2$-orbit closure
 $\mathcal{C}$ of
 $\fp$ (cf. Theorem ~\ref{4.2}). Next, we analyze $\mathcal{C}$
 in Section 6. In particular, we give its partial
 desingularization of the form $\mathcal D:=L_P^2\times_R \,
 (S^*\times \cz//\bc^*)$ (cf. Proposition ~\ref{5.3} and
 Lemma ~\ref{5.2}), where $\cz$ is the $\GL(S_1)$-orbit closure
 of the permanent function perm inside $S^n(E^*)$, $\bc^*$ acts on
 $S^*\times \cz$ via the equation \eqref{e5.1} and the action
 of $R$ on $S^*\times \cz//\bc^*$ is given in Section 6 immediately after
 Lemma ~\ref{5.2}. We determine the ring
 of regular functions on $\mathcal D$ (as a $L_P^2$-module) completely
 (and explicitly) in terms of the ring of regular functions on $\cz$ as a
 $\GL(S_1)$-module
 (cf. Theorem ~\ref{6.5}). Via the Zariski's main theorem, this allows one to give the $G$-module
 decomposition of the normalization of $\cy$ completely in terms of the
 $\GL(S_1)$-module decomposition of the ring of regular functions on the
 normalization of the $\GL(S_1)$-variety $\cz$ (use Theorem ~\ref{4.2}, Corollary ~\ref{4.4},
 Lemma ~\ref{5.2}, Proposition \ref{5.3} and Theorem ~\ref{6.5}). It may be remarked that we are not
 able to give an explicit $G$-module decomposition of $\bc[\cy]$ itself
 from that of the $\GL(S_1)$-module $\bc[\cz]$. By comparing the explicit $L_P^2$-module
 decomposition of
 the ring of regular functions $\bc[\mathcal D]$ mentioned above
 with
 the ring of regular functions on the $L_P^2$-orbit closure of
 $\fp$,  we conclude that
 $\cy$ is not normal for any $m\geq 2n$ and $n\geq 3$ (cf. Theorem ~\ref{7.4}). A similar idea
 allows us to conclude that the orbit closures of $\fp$ under the groups
 $R$ and $L_P^2$ are not normal (cf. Corollaries ~\ref{7.2} and ~\ref{7.5}).

 \noindent
 {\em Acknowledgements.}  I thank J. Landsberg for bringing my attention to
 the works of Mulmuley-Sohoni and his comments to an earlier version of the paper and to K. Mulmuley for explaining to me some of his
 works. This work was partially supported by the NSF grant DMS 0901239.

\section{Coordinate ring of the orbit closure of det}

Take a vector space ${\mathfrak v}$ of dimension $m$ and let $E={\mathfrak v}\otimes
{\mathfrak v}^* = \End {\mathfrak v}$. Consider $G = \GL(E)$ acting canonically on
$Q=S^m(E^*)$, and consider $\det\in Q$, where $\det$ is the function taking
determinant of any $A\in \End {\mathfrak v}$.

Recall the following result due to Frobenius [Fr] (cf., e.g., [GM] for a survey).
\begin{proposition} \label{1.1} The isotropy $G_{\det} \subset G$
consists of the transformations of the form $\tau : Y\mapsto AY^*B,$ where $Y^* = Y$ or $Y^t$
and $A,B\in \SL({\mathfrak v})$. (Here $Y^t$ denotes the transpose of $Y$
with respect to a fixed basis of  $\mathfrak v$.)
\end{proposition}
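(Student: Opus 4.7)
The plan is the classical Frobenius--Dieudonn\'e strategy, carried out in two halves. The ``easy'' inclusion is immediate from multiplicativity: for $A,B\in \SL(\fv)$ one has $\det(AYB)=\det A\cdot \det Y\cdot \det B=\det Y$ and $\det(Y^t)=\det Y$, so every transformation of the stated form lies in $G_{\det}$.

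For the reverse inclusion, suppose $\tau\in G_{\det}$, i.e.\ $\det\circ\tau=\det$ as polynomials on $E$. Then $\tau$ preserves the hypersurface $V=\{Y\in E:\det Y=0\}$. Its singular locus, cut out by the $(m-1)\times(m-1)$ minors, is $\{Y:\operatorname{rank}(Y)\leq m-2\}$; iterating the process (each further singular locus drops the rank by one) recovers the full rank stratification of $E$. Hence $\tau$ preserves every rank stratum, in particular the set $\mathcal{R}_1=\{Y:\operatorname{rank}(Y)\leq 1\}$, which is the affine cone over the Segre image $\bp(\fv)\times\bp(\fv^*)\hookrightarrow\bp(E)$.

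The heart of the proof is then a rigidity statement for the Segre: any linear automorphism of $E=\fv\otimes\fv^*$ preserving $\mathcal{R}_1$ must have the form $u\otimes v^*\mapsto (Au)\otimes(Bv^*)$ for some $A\in\GL(\fv)$, $B\in\GL(\fv^*)$, possibly composed with the swap $u\otimes v^*\mapsto v^*\otimes u$ (available because the two factors of the Segre have equal dimension). Translating back through the basis identification $\fv^*\simeq\fv$, the swap becomes transposition on $\End(\fv)$, and we obtain $\tau(Y)=AYB'$ or $\tau(Y)=AY^tB'$ for some $A,B'\in \GL(\fv)$. I expect this Segre-rigidity step to be the main technical obstacle; a convenient proof is to observe that through a generic point of the Segre there pass exactly two maximal linear subspaces (the two rulings, each of dimension $m-1$), so $\tau$ either preserves each ruling individually---producing the factorized form---or exchanges the two families, producing the swap.

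Finally, imposing $\det(\tau Y)=\det Y$ forces $\det(A)\cdot\det(B')=1$. Since the pairs $(A,B')$ and $(\lambda A,\lambda^{-1}B')$ define the same $\tau$, we may rescale by a suitable $m$-th root $\lambda=\det(A)^{-1/m}$ to achieve $\det A=\det B'=1$, producing the desired elements of $\SL(\fv)$ and completing the identification of $G_{\det}$.
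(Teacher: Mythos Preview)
The paper does not supply its own proof of this proposition: it is simply recalled as a classical result of Frobenius, with references [Fr] and the survey [GM]. Your sketch is a correct outline of the standard Frobenius--Dieudonn\'e argument---preservation of the rank stratification via iterated singular loci of the determinantal hypersurface, then rigidity of the two rulings on the Segre $\bp(\fv)\times\bp(\fv^*)$, and finally the rescaling to land in $\SL(\fv)$---and would serve perfectly well as a self-contained proof had the paper chosen to include one.
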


\begin{lemma}\label{1.2} Any $\tau$ of the form $\tau(Y) = AYB$ as above can be written as
\beqn
\End {\mathfrak v} = {\mathfrak v}\otimes {\mathfrak v}^* \to {\mathfrak v}\otimes
 {\mathfrak v}^*,\,\, v\otimes f \;\;\; \mapsto Av\otimes
B^*f,\eeqn
where $B^*$ is the dual map induced from $B$.
 In particular, such a $\tau$ has determinant 1.

 If $\tau$ is of the form $\tau(Y) = AY^tB$ as in the above proposition, then
 \beqn\det \tau = (-1)^{\frac{m(m-1)}{2}}.\eeqn
 \end{lemma}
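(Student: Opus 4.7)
For the first assertion, the natural plan is to work directly with the identification $\End \mathfrak{v} = \mathfrak{v}\otimes \mathfrak{v}^*$, under which a pure tensor $v\otimes f$ represents the rank-one endomorphism $w\mapsto f(w)v$. Computing $(AYB)(w)$ on $Y = v\otimes f$ gives $A(f(Bw)v) = (f\circ B)(w)\cdot Av = (B^*f)(w)\cdot Av$, so $AYB = Av\otimes B^*f$. Thus $\tau$ equals the tensor product operator $A\otimes B^*$ on $\mathfrak{v}\otimes\mathfrak{v}^*$, and the claimed formula is immediate.

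For the determinant of this $\tau$, I would invoke the standard identity $\det(A\otimes B^*) = (\det A)^{\dim \mathfrak{v}^*}(\det B^*)^{\dim \mathfrak{v}} = (\det A)^m(\det B)^m$, using $\det B^* = \det B$. Since $A,B\in \SL(\mathfrak{v})$, both factors are $1$, giving $\det \tau = 1$.

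For the second assertion, factor $\tau(Y)=AY^tB$ as $\tau = \tau_1\circ \tau_0$, where $\tau_0(Y)=Y^t$ is the transpose with respect to the chosen basis and $\tau_1(Y) = AYB$. By the first part $\det \tau_1 = 1$, so everything reduces to computing $\det \tau_0$ on $\End \mathfrak{v}$. In the basis of elementary matrices $E_{ij}$, the operator $\tau_0$ swaps $E_{ij}$ and $E_{ji}$ for $i\neq j$ and fixes each $E_{ii}$. Decomposing $\End \mathfrak{v}$ as the direct sum of symmetric matrices (dimension $m(m+1)/2$, on which $\tau_0$ acts as $+1$) and antisymmetric matrices (dimension $m(m-1)/2$, on which $\tau_0$ acts as $-1$) yields $\det \tau_0 = (-1)^{m(m-1)/2}$, hence $\det \tau = (-1)^{m(m-1)/2}$.

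There is no serious obstacle here; the only thing to be slightly careful about is the sign convention for $B^*$ and the computation of $\det(A\otimes B^*)$, but both are standard. The real content of the lemma is the clean tensor-factorization $\tau = A\otimes B^*$, which is then used repeatedly in the sequel.
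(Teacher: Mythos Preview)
Your proof is correct and follows essentially the same route as the paper: identify $\tau$ with $A\otimes B^*$, compute its determinant, and reduce the second case to the transpose map $\tau_0$. The only differences are cosmetic --- the paper verifies $\tau=\hat\tau$ by a basis computation and reads off $\det\tau$ from the eigenvalues of $A$ and $B$ rather than quoting the tensor-product determinant formula, and it leaves the computation of $\det\tau_0$ to the reader where you supply the symmetric/antisymmetric decomposition.
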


\begin{proof} Take a basis $\{ e_i\}$ of ${\mathfrak v}$ and let $\{ e^*_i\}$ be the
 dual basis of
${\mathfrak v}^*$. Let $A=(a_{i,j})$ be the matrix of $A$ in the basis $\{e_i\}$
of $\mathfrak v$ and similarly $B=(b_{i,j})$. Then, \[ (B^*e_j^*)\, e_p = e^*_j\, (Be_p)
 = \sum_{\ell}\, e^*_j \bigl( b_{\ell,
p}\, e_{\ell}\bigr)= b_{j,p} .\] Thus, $ B^*e^*_j = \sum_p\, b_{j,p}\, e^*_p .$ Hence,
denoting the map (1) by $\hat{\tau}$, we have
 \[
e_{i,j}:= e_i\otimes e^*_j \overset{\hat{\tau}}{\longmapsto} Ae_i\otimes B^*(e^*_j) =
\sum_{k,p} a_{k,i}e_k\otimes b_{j,p}e^*_p = \sum_{k,p} a_{k,i}b_{j,p}\, e_k\otimes e^*_p .\]
Thus, \[\bigl( \hat{\tau} (e_{i,j})\bigr)_{k,p}= a_{k,i} b_{j,p}  = (A e_{i,j}B)_{k,p} ,\]
where $\bigl( \hat{\tau} (e_{i,j})\bigr)_{k,p}$ denotes the $(k,p)$-th component of
$\hat{\tau} (e_{i,j})$ in the basis $\{e_{k,p}\}$. This
proves $\tau = \hat{\tau}$.

Let $\{\lam_1,\dots ,\lam_m\}$ be the eigenvalues of $A$ and $\{ \mu_1,\dots ,\mu_m\}$ the
eigenvalues of $B$. Then, \begin{align*} \det \hat{\tau} &= \prod^{m}_{i,j=1} \lam_i\mu_j\\
&= \prod_i\, \bigl(\lam^m_i \det B\bigr)\\ &= (\det A)^m\, (\det B)^m \\ &= 1, \; \text{
since $\det A = \det B=1$}. \end{align*}

To prove (2), in view of the above, we can assume that $\tau(Y)=Y^t$. The proof in this case
 is easy. \end{proof}

As a consequence  of Proposition ~\ref{1.1} and Lemma ~\ref{1.2}, we get the following.
\begin{corollary}\label{1.3}
We have a group isomorphism:
 \[ \phi: \SL({\mathfrak v}) \times \SL({\mathfrak v})/\Theta_m\simeq G^o_{\det}, \,\, \phi
 [A,B]
(v\otimes f)= Av\otimes (B^{-1})^*f, \] where $\Theta_m$ is the group of the $m$-th
roots of unity acting on $ \SL({\mathfrak v}) \times \SL({\mathfrak v})$ via: $z(A,B)=(zA,zB)$,
$[A,B]$ denotes the $\Theta_m$-orbit of $(A,B)$ and
$ G^o_{\det}$ denotes the identity component of $ G_{\det}$.

In particular, $\dim (G'\cdot \det)=(m^2-1)^2$, where $G':=\SL(E)$. Moreover,
$G^o_{\det} \subset G'_{\det}$.

If $\left(\begin{smallmatrix} m \\ 2
\end{smallmatrix}\right)$ is even, then $G_{\det} \subset G'$.
\end{corollary}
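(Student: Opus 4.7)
The plan is to build the isomorphism $\phi$ directly from the description of $G_{\det}$ in Proposition \ref{1.1} and Lemma \ref{1.2}, and then read off the remaining assertions. First I would verify the basic properties of $\phi$. Well-definedness on $\Theta_m$-orbits is immediate: replacing $(A,B)$ by $(zA, zB)$ with $z^m = 1$ rescales the first tensor slot by $z$ and the second by $z^{-1}$ (since $(zB)^{-1} = z^{-1}B^{-1}$ passes through the dualization as the factor $z^{-1}$), and these cancel. The homomorphism property reduces to $((B_1 B_2)^{-1})^* = (B_1^{-1})^* (B_2^{-1})^*$ on the $\mathfrak{v}^*$-factor. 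Injectivity: if $\phi[A,B]$ fixes every decomposable tensor $v \otimes f$, then $A$ and $(B^{-1})^*$ must be mutually reciprocal scalars, and combined with $\det A = \det B = 1$ this forces $A = zI$, $B = zI$ with $z^m = 1$, a single $\Theta_m$-orbit of $(I, I)$.

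For the surjection onto the identity component, I would use Proposition \ref{1.1} to split $G_{\det}$ into two pieces according to whether $Y^* = Y$ or $Y^t$. By Lemma \ref{1.2}(1) the no-transpose piece is exactly $\operatorname{Im}(\phi)$ via the matching $\tau(Y) = AYB \leftrightarrow \phi[A, B^{-1}]$; it is a subgroup, and it is connected as the image of the connected algebraic group $\SL(\mathfrak{v}) \times \SL(\mathfrak{v})$. The transpose piece is a single coset of this subgroup, and the two are disjoint for $m \geq 2$, since $AYB = Y^t$ for all $Y$ would force $B = A^{-1}$ (by $Y = I$) and then conjugation by $A$ would agree with transposition on $\End \mathfrak{v}$, impossible on a non-commutative algebra. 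Thus $G_{\det}$ has exactly two components and $\operatorname{Im}(\phi) = G^o_{\det}$.

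The remaining assertions then fall out of what has already been established. The inclusion $G^o_{\det} \subset G'_{\det}$ is immediate from the determinant-one statement of Lemma \ref{1.2}(1), since the elements of $\operatorname{Im}(\phi)$ are exactly the no-transpose transformations. The dimension count follows from orbit--stabilizer applied to $G' = \SL(E)$ acting on $\det$: because $G^o_{\det}$ sits inside $G'_{\det}$ as its identity component, $\dim G'_{\det} = \dim G^o_{\det} = 2(m^2 - 1)$, so $\dim(G' \cdot \det) = (m^4 - 1) - 2(m^2 - 1) = (m^2 - 1)^2$. Finally, when $\binom{m}{2}$ is even, $(-1)^{m(m-1)/2} = +1$, and Lemma \ref{1.2}(2) says the transpose-type elements also have determinant one, yielding $G_{\det} \subset G'$. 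The only real care required is in the second paragraph, tracking the connected-component and coset structure of $G_{\det}$; once that is pinned down, both the isomorphism and the dimension count are formal.
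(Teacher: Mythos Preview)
Your proposal is correct and follows exactly the route the paper intends: the paper states the corollary without proof, simply noting it ``as a consequence of Proposition~\ref{1.1} and Lemma~\ref{1.2}'', and you have carefully filled in precisely those details (well-definedness and kernel of $\phi$, the two-coset decomposition of $G_{\det}$ into transpose and non-transpose pieces, and the determinant computations from Lemma~\ref{1.2}). Nothing further is needed.
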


Since the isotropy $G'_{\det}$ is not contained in any
proper parabolic subgroup of $G'$ (as can be easily seen by observing that no
proper subspace of $E$ is stable under $G^o_{\det}$), Kempf's theorem [Ke, Corollary 5.1]
gives the following result observed in [MS1, Theorem 4.1]:
\begin{proposition} \label{1.4} The orbit $G'\cdot \det$ is closed in $Q$.
\end{proposition}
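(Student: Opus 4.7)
The plan is to verify the hypothesis of Kempf's theorem [Ke, Corollary 5.1]: if the isotropy subgroup $G'_v$ of a point $v$ in a rational $G'$-representation is not contained in any proper parabolic subgroup of $G'$, then the orbit $G'\cdot v$ is closed. Applied to $v=\det$, the Proposition thus reduces to showing that no proper parabolic subgroup of $G'=\SL(E)$ contains $G'_{\det}$.

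Every proper parabolic $P\subsetneq\SL(E)$ is the $\SL(E)$-stabilizer of a non-trivial partial flag in $E$, and in particular stabilizes some proper nonzero subspace of $E$. As parabolic subgroups are connected, and $G^o_{\det}\subset G'_{\det}$ by Corollary \ref{1.3}, any containment $G'_{\det}\subset P$ would force $G^o_{\det}\subset P$. Hence it suffices to exhibit no proper nonzero $G^o_{\det}$-stable subspace of $E$.

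By Corollary \ref{1.3}, $G^o_{\det}$ is the image of $\SL({\mathfrak v})\times\SL({\mathfrak v})$ acting on $E={\mathfrak v}\otimes{\mathfrak v}^*$ via $(A,B)\cdot (v\otimes f)=Av\otimes (B^{-1})^*f$. Under this action $E$ is the outer tensor product of ${\mathfrak v}$ and ${\mathfrak v}^*$, each of which is irreducible as an $\SL({\mathfrak v})$-module (the action on the second factor, $B\mapsto (B^{-1})^*$, is the costandard representation, still irreducible). Therefore $E$ is irreducible as an $\SL({\mathfrak v})\times\SL({\mathfrak v})$-module and admits no proper nonzero $G^o_{\det}$-stable subspace, completing the reduction. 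The only substantive ingredient is Kempf's closedness criterion; given it, the rest is an immediate consequence of the tensor-product structure of $E$, so I do not foresee any real obstacle.
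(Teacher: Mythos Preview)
Your proposal is correct and follows essentially the same approach as the paper: apply Kempf's criterion by showing that $G^o_{\det}$ leaves no proper nonzero subspace of $E$ invariant, which you justify via the irreducibility of the outer tensor product ${\mathfrak v}\boxtimes{\mathfrak v}^*$. The paper states this more tersely (``as can be easily seen by observing that no proper subspace of $E$ is stable under $G^o_{\det}$''), but the content is identical.
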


 Let $\cx^o:=G\cdot \det, \cx:=\overline{\cx^o},$ where the closure is taken inside $Q$, and let
 $\cx':=G'\cdot \det.$ The following simple lemma is taken from [MS].
\begin{lemma} \label{1.5} For any $d\geq 0$, the restriction map
\[ \phi^d : \bc^d[\cx] \to \bc [\cx'] \] is
injective, where $\bc^d[\cx]$ is the homogeneous degree $d$-part of
 $\bc [\cx]$ (i.e., $\bc^d[\cx]$ is a quotient of $S^d(S^m(E))$).
\end{lemma}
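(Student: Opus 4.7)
\smallskip
\noindent
The plan is to exploit the decomposition $G = \bc^*\cdot G'$ of $\GL(E)$ as the product of its subgroup $G' = \SL(E)$ with the central subgroup $\bc^*\cdot I_E$ of homotheties, which reduces the injectivity claim to the elementary observation that a homogeneous function vanishing on a set also vanishes on every $\bc^*$-scaling of that set. Geometrically, $\cx^o$ is simply the cone over $\cx'$ in $Q$.

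First I would record the scalar action: under the convention $(g\cdot q)(X) = q(g^t X)$ on $Q = S^m(E^*)$, a central element $tI_E\in G$ acts on $q\in Q$ by the scalar $t^m$, because $q$ is homogeneous of degree $m$ in $X$. Since $t\mapsto t^m$ is surjective on $\bc^*$ (the field being algebraically closed), this gives
\[
\cx^o \;=\; G\cdot\det \;=\; \bc^*\cdot(G'\cdot\det) \;=\; \bc^*\cdot\cx'.
\]

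Next, I would take any $f\in\bc^d[\cx]$ with $\phi^d(f) = 0$, i.e.\ $f\vert_{\cx'}\equiv 0$, and any point $y\in\cx^o$. Writing $y = \lambda x'$ with $\lambda\in\bc^*$ and $x'\in\cx'$, the degree-$d$ homogeneity of $f$ (inherited from the grading $\bc[Q] = \bigoplus_d S^d(S^m(E))$ that defines $\bc^d[\cx]$) yields $f(y) = \lambda^d f(x') = 0$. Hence $f$ vanishes on $\cx^o$, and since $\cx^o$ is Zariski-dense in $\cx = \overline{\cx^o}$, the regular function $f$ vanishes on all of $\cx$, proving $f = 0$ in $\bc[\cx]$.

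There is essentially no obstacle here: the whole content of the lemma is the remark that the homothecy grading on $\bc[\cx]$ is induced by the action of the central $\bc^*\subset G$ on $Q$, so that each $G$-orbit in $Q$ is a $\bc^*$-cone over the corresponding $G'$-orbit. The only minor point to watch is the exponent mismatch (central $tI_E$ acts by $t^m$ rather than $t$); algebraic closedness of $\bc$ makes this harmless.
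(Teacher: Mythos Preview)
Your proof is correct and follows essentially the same approach as the paper: both use that a degree-$d$ homogeneous function vanishing on $\cx'$ must vanish on all scalar multiples $\bc^*\cdot\cx'$, and that this set is dense in $\cx$. You are slightly more explicit about why $\bc^*\cdot\cx' = \cx^o$ (via surjectivity of $t\mapsto t^m$), whereas the paper simply asserts $\overline{\bc\cdot\cx'} = \cx$, but the argument is the same.
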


\begin{proof} Take $f\in\bc^d[\cx]$ such that $\phi^d(f) =0$, i.e.,
$f(x)=0$, for all $x\in \cx'$. Then, for any $ z\in\bc$ and $x\in \cx', f(zx) = z^d f(x) =
 0$, i.e., $ f(\bc\cdot \cx')\equiv
0$ and hence $f(\overline{\bc\cdot \cx'}) \equiv 0$. But,
$\overline{\bc\cdot \cx'}=\cx$ and hence  $f(\cx) \equiv 0.$ This proves the
lemma. \end{proof}

As a consequence of Proposition ~\ref{1.4} and Lemma ~\ref{1.5} and the
Frobenius reciprocity, one has the
following result due to [MS2]:
\begin{corollary} \label{1.6} An irreducible $G'$-module $M$ occurs in
$\bc [G'/G'_{\det} ] = \bc [\cx']
\Leftrightarrow M$ occurs in $\bc[\cx]$. In particular, an
irreducible $G'$-module $M$ occurs in $\bc[\cx]
\Leftrightarrow M^{G'_{\det}} \neq 0$. \end{corollary}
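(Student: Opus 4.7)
The plan is to deduce Corollary~\ref{1.6} by combining Proposition~\ref{1.4} (closedness of the $G'$-orbit), Lemma~\ref{1.5} (injectivity of $\phi^d$), and Frobenius reciprocity on the homogeneous space $G'/G'_{\det}$. The heart of the argument is showing that $\bc[\cx]$ and $\bc[\cx']$ have the same irreducible $G'$-constituents (with appropriate multiplicities); the ``in particular'' statement then falls out of Frobenius reciprocity.

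First I would treat the backward direction. By Proposition~\ref{1.4}, $\cx' = G'\cdot\det$ is closed in $Q$, hence closed inside $\cx$. The closed embedding $\cx' \hookrightarrow \cx$ therefore induces a $G'$-equivariant surjection
\[
\rho:\bc[\cx]\twoheadrightarrow\bc[\cx'].
\]
Since $G'=\SL(E)$ is reductive, $\rho$ splits as a morphism of $G'$-modules, so every irreducible $G'$-constituent of $\bc[\cx']$ also appears in $\bc[\cx]$.

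For the forward direction, note that $\rho$ preserves the grading $\bc[\cx]=\bigoplus_{d\geq 0}\bc^d[\cx]$, and its restriction to the degree-$d$ piece is precisely the map $\phi^d$ of Lemma~\ref{1.5}, which is injective and $G'$-equivariant. Hence any irreducible $G'$-submodule of $\bc[\cx]$ lies in some $\bc^d[\cx]$ and embeds via $\phi^d$ into $\bc[\cx']$.

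Finally, since $\cx'$ is closed it identifies $G'$-equivariantly with $G'/G'_{\det}$, and Frobenius reciprocity yields, for every irreducible $G'$-module $M$,
\[
\dim\Hom_{G'}\bigl(M,\bc[\cx']\bigr)=\dim\Hom_{G'_{\det}}(M,\bc)=\dim(M^*)^{G'_{\det}}.
\]
By Corollary~\ref{1.3} the identity component of $G'_{\det}$ is (a quotient of) $\SL(\fv)\times\SL(\fv)$, so $G'_{\det}$ is reductive; duality on finite-dimensional modules of a reductive group then gives $\dim(M^*)^{G'_{\det}}=\dim M^{G'_{\det}}$, producing the ``in particular'' statement. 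The only subtle point I anticipate is the surjectivity of $\rho$, which genuinely uses the closedness in Proposition~\ref{1.4}: absent that, restriction from $\cx$ would not be guaranteed to hit every regular function on $\cx'$, and the equivalence of module constituents would fail.
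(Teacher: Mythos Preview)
Your proof is correct and follows exactly the route the paper indicates (it merely cites Proposition~\ref{1.4}, Lemma~\ref{1.5}, and Frobenius reciprocity without spelling out the details you have supplied). One small imprecision worth tightening: an arbitrary irreducible $G'$-submodule of $\bc[\cx]$ need not literally lie inside a single graded piece $\bc^d[\cx]$; what is true, and all you need, is that since the grading is $G'$-stable, any irreducible \emph{constituent} of $\bc[\cx]$ already occurs in some $\bc^d[\cx]$ and hence, via the injective $\phi^d$, in $\bc[\cx']$.
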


\begin{example} Let $m=2$. Then, $G\cdot \det$ is dense in $Q=S^2(E^*)$
(since they have the same dimensions by Corollary ~\ref{1.3}). Moreover,
$Q$ has $5$ orbits under $G$ of dimensions: $10, 9, 7, 4, 0$.

To show this, obeserve that there are exactly $5$ quadratic forms in $4$ variables (up to
the change of a basis): $x_1^2+x_2^2+x_3^2+x_4^2; x_1^2+x_2^2+x_3^2; x_1^2+x_2^2;
x_1^2;0$. Their isotropies under the $G$-action have dimensions:
$6,7,9,12,16$ respectively.
\end{example}

\section{Non-normality of the orbit closure of $\det$}

We first recall the following two elementary lemmas from commutative Algebra.
\begin{lemma} \label{2.1} Let $R$ be a $\Bbb Z_+$-graded algebra over the complex numbers $\Bbb C$ with
the degree $0$-component $R^0=\Bbb C$ and let $M$ be a $\Bbb Z_+$-graded $R$-module. Let
$\mathfrak m$ be the augmentation ideal $\oplus_{d>0}\, R^d$ and assume that
$M/(\mathfrak m\cdot M)$ is a finite dimensional vector space over $R/\mathfrak m\simeq\Bbb C.
$ Then, $M$ is a finitely generated $R$-module.
\end{lemma}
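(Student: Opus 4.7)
The plan is to prove this via the standard graded Nakayama argument: exhibit an explicit finite generating set from a $\mathbb{C}$-basis of $M/(\mathfrak{m}M)$ and verify it generates by a minimal-degree argument on the quotient.

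First I would note that since $\mathfrak{m}$ is a homogeneous ideal, $\mathfrak{m}M$ is a graded submodule, so $M/(\mathfrak{m}M)$ inherits a $\mathbb{Z}_+$-grading. Because $M/(\mathfrak{m}M)$ is assumed finite-dimensional over $\mathbb{C}$, I can pick a finite $\mathbb{C}$-basis consisting of homogeneous elements, and then lift these to homogeneous elements $m_1,\dots,m_k \in M$. Let $N \subseteq M$ be the graded $R$-submodule generated by $m_1,\dots,m_k$. The goal is to prove $N = M$, which would give that $M$ is finitely generated.

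Consider the graded quotient $P := M/N$. By the choice of the $m_i$, the composition $N \to M \to M/(\mathfrak{m}M)$ is surjective, hence $M = N + \mathfrak{m}M$, which implies $P = \mathfrak{m}P$. Now assume for contradiction that $P \ne 0$, and let $d \ge 0$ be the smallest integer with $P^d \ne 0$. Then
\[
(\mathfrak{m}P)^d \;=\; \sum_{i \ge 1} R^i \cdot P^{d-i},
\]
and each summand is zero because either $i > d$ (so $d-i < 0$ and $P^{d-i} = 0$) or $1 \le i \le d$ (so $d-i < d$ and $P^{d-i} = 0$ by minimality of $d$). Hence $(\mathfrak{m}P)^d = 0$. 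But $P = \mathfrak{m}P$ forces $P^d = (\mathfrak{m}P)^d = 0$, contradicting the choice of $d$. Therefore $P = 0$, i.e., $N = M$, completing the proof.

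There is no substantive obstacle here: the statement is graded Nakayama, and the only mild point to check is that a basis of $M/(\mathfrak{m}M)$ can be taken homogeneous (immediate from the induced grading) and that the minimal-degree argument rules out nontrivial $P$ with $P = \mathfrak{m}P$. The hypothesis $R^0 = \mathbb{C}$ is used to identify $R/\mathfrak{m}$ with $\mathbb{C}$ so that "finite-dimensional over $R/\mathfrak{m}$" makes sense and gives finitely many generators.
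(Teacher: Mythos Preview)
Your proof is correct and follows essentially the same approach as the paper: lift a homogeneous basis of $M/(\mathfrak{m}M)$, let $N$ be the submodule it generates, observe $\mathfrak{m}(M/N)=M/N$, and derive a contradiction from the minimal nonzero degree of $M/N$. The only difference is that you spell out explicitly why $(\mathfrak{m}P)^d=0$, whereas the paper leaves this implicit.
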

\begin{proof} Choose a set of homogeneous generators $\{\bar{x}_1, \dots, \bar{x}_n\}
\subset M/(\mathfrak m\cdot M)$ over $R/\mathfrak m$ and let $x_i\in M$ be
a homogeneous lift of $\bar{x}_i$. Let $N\subset M$ be the graded
$R$-submodule: $Rx_1+\dots +Rx_n$. It is easy to see that \beqn
\label{e2.1}\mathfrak m\cdot (M/N)=M/N.\eeqn If $M/N \neq 0$, let
$d_o\geq 0$ be the smallest degree such that $(M/N)^{d_o}\neq 0$.
Clearly, \eqref{e2.1} contradicts this. Hence $N=M$.
\end{proof}
\begin{lemma} \label{2.2} Let $R$ and $S$ be two non-negatively graded finitely generated domains over
$\Bbb C$ such that $R^0=S^0=\Bbb C$  and let $f:R \to S$ be a graded
algebra injective homomorphism. Assume that the induced map
$\hat{f}: \Spec S \to \Spec R$ satisfies $(\hat{f})^{-1} (\mathfrak
m_R)=\{\mathfrak m_S\}$, where $\mathfrak m_S$ is the augmentation ideal
of $S$ and $\Spec S$ denotes the space of maximal ideals of $S$.
Then, $S$ is a finitely generated $R$-module; in particular, it is
integral over $R$.
\end{lemma}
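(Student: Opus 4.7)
The plan is to apply Lemma~\ref{2.1} directly to $M=S$, viewed as a graded $R$-module via the injection $f$. The content then reduces to showing that $S/(\mathfrak m_R\cdot S)$ is a finite-dimensional vector space over $\mathbb{C}$.

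First I would translate the fibre hypothesis into a radical identity. For any maximal ideal $\mathfrak n\subset S$, the ideal $f^{-1}(\mathfrak n)\subset R$ is again maximal (the standard corollary of the Nullstellensatz for morphisms between finitely generated $\mathbb{C}$-algebras, which is why $\hat f$ is well-defined on $\Spec$). Hence $f^{-1}(\mathfrak n)=\mathfrak m_R$ if and only if $\mathfrak m_R\subset f^{-1}(\mathfrak n)$, i.e.\ if and only if $\mathfrak m_R\cdot S\subset \mathfrak n$. The assumption $(\hat f)^{-1}(\mathfrak m_R)=\{\mathfrak m_S\}$ therefore says that $\mathfrak m_S$ is the \emph{only} maximal ideal of $S$ containing $\mathfrak m_R\cdot S$, so by the Nullstellensatz applied inside the finitely generated $\mathbb{C}$-algebra $S$,
\[
\sqrt{\mathfrak m_R\cdot S}\;=\;\mathfrak m_S.
\]

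Next I would use the graded structure to promote this radical identity to an honest containment of a power. Since $S$ is finitely generated, $\mathfrak m_S$ can be generated by finitely many homogeneous elements $s_1,\dots,s_k$ of strictly positive degree, each of which lies in $\sqrt{\mathfrak m_R\cdot S}$, so a common exponent gives $\mathfrak m_S^N\subset \mathfrak m_R\cdot S$ for some $N\gg 0$. The quotient $S/\mathfrak m_S^N$ is finite-dimensional over $\mathbb{C}$, since every homogeneous element of $S$ of sufficiently large degree is a linear combination of monomials in the $s_i$ of total length $\geq N$, hence lies in $\mathfrak m_S^N$. The further quotient $S/(\mathfrak m_R\cdot S)$ is therefore also finite-dimensional over $R/\mathfrak m_R\simeq\mathbb{C}$.

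Lemma~\ref{2.1} applied to the graded $R$-module $S$ then gives that $S$ is finitely generated as an $R$-module, and the final integrality assertion is the standard Cayley--Hamilton consequence of module-finiteness. The only step with any subtlety is the passage from the set-theoretic fibre hypothesis to the radical identity $\sqrt{\mathfrak m_R\cdot S}=\mathfrak m_S$; once that is in hand, the rest is graded Nakayama and presents no real obstacle.
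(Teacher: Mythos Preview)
Your proof is correct and follows essentially the same route as the paper's own proof: both reduce to showing $\sqrt{\mathfrak m_R\cdot S}=\mathfrak m_S$, deduce $\mathfrak m_S^N\subset \mathfrak m_R\cdot S$ for some $N$, conclude that $S/(\mathfrak m_R\cdot S)$ is finite-dimensional, and then invoke Lemma~\ref{2.1}. The only differences are cosmetic (the paper cites [AM, Corollary~7.16 and Proposition~5.1] where you spell out the arguments).
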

\begin{proof} Let $\mathfrak m_R'$ be the ideal in $S$ generated by $f(\mathfrak m_R)$. Then,
by assumption, $\mathfrak m_S$ is the only maximal ideal of $S$ containing
$\mathfrak m_R'$. Hence, the
radical ideal $\surd{\mathfrak m_R'}=\mathfrak m_S.$ Thus,
$\mathfrak m_R'\supset \mathfrak m_S^d$, for some $d>0$ (cf. [AM,
Corollary 7.16]). In particular, $S/\mathfrak m_R'$ is a finite
dimensional vector space over $\Bbb C$ and hence by the above lemma,
$S$ is a finitely generated $R$-module. This proves that $S$ is
integral over $R$ (cf. [AM, Proposition 5.1]).
\end{proof}

Let
$\partial \cx:=\cx\setminus \cx^o$ be its boundary; all equipped with the
locally-closed (reduced) subvariety structure coming from $Q$. Let $\mathcal{I}\subset
\Bbb C[\cx]$
denote the ideal of $\partial \cx$.

\begin{lemma} \label{2.3}
For any nonzero $G$-submodule $V\subset \mathcal{I}$, the zero set
\[ Z(V):=\{y\in \cx: f(y)=0 \,\forall f\in V\}\]
equals $\partial \cx$.
\end{lemma}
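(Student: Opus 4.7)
The plan is to show the two inclusions $\partial \cx \subseteq Z(V)$ and $Z(V) \subseteq \partial \cx$ separately; only the second requires any real argument.

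The inclusion $\partial \cx \subseteq Z(V)$ is immediate: by definition of $\mathcal{I}$, every $f \in V \subset \mathcal{I}$ vanishes identically on $\partial \cx$.

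For the reverse inclusion, I will argue by contradiction using the fact that $V$ is $G$-stable together with the transitivity of $G$ on $\cx^o$. Suppose that $y \in Z(V) \cap \cx^o$; then $y = g \cdot \det$ for some $g \in G$. Recalling that the $G$-action on $\bc[\cx]$ is given by $(h\cdot f)(x) = f(h^{-1}\cdot x)$, the hypothesis $f(y) = 0$ for all $f \in V$ translates into $(g^{-1}\cdot f)(\det) = 0$ for all $f \in V$. Since $V$ is $G$-stable, $g^{-1}\cdot f$ ranges over all of $V$ as $f$ does, so every element of $V$ vanishes at $\det$. Applying $G$-stability once more, for any $h \in G$ and $f \in V$ we have $f(h\cdot \det) = (h^{-1}\cdot f)(\det) = 0$, because $h^{-1}\cdot f \in V$. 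Hence $V$ vanishes identically on $\cx^o = G\cdot\det$, and by continuity on all of $\cx = \overline{\cx^o}$. This forces $V = 0$ in $\bc[\cx]$, contradicting the assumption that $V$ is a nonzero submodule. Therefore $Z(V) \cap \cx^o = \emptyset$, i.e.\ $Z(V) \subseteq \partial \cx$.

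There is no real obstacle here; the only thing to be careful about is the bookkeeping of the $G$-action on functions (which sign of $g$ appears), but once that is fixed the argument is a one-line orbit-transitivity observation. Combining the two inclusions yields $Z(V) = \partial \cx$, completing the proof.
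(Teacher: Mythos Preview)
Your proof is correct and follows essentially the same approach as the paper: both arguments use that $Z(V)$ is $G$-stable (which you verify explicitly via the action on functions) together with the fact that $\cx^o$ is a single $G$-orbit, so that if $Z(V)$ meets $\cx^o$ it must contain all of $\cx$, forcing $V=0$. The paper states this more tersely, but the underlying idea is identical.
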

\begin{proof} Of course, $Z(V)\supset \partial \cx$. Moreover, $Z(V)$ is a
$G$-stable subset of $\cx$. If $Z(V)$ properly contains $\partial \cx$,
then $Z(V)=\cx,$ which is a contradiction since $V$ is  nonzero.
\end{proof}
\begin{remark} {\em The above lemma is clearly true (by the same proof)
for any $G$-orbit closure $X$ in an affine $G$-variety $Y$.}
\end{remark}

\begin{proposition} \label{2.4} The ideal $\mathcal{I}\subset \Bbb C[\cx]$  contains
 a nonzero $G'$-invariant.
\end{proposition}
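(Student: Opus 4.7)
The plan is to produce the required $G'$-invariant by analysing the categorical quotient $\pi\colon \cx \to \cx//G' = \Spec \bc[\cx]^{G'}$, combining $G'$-invariance with the $\bc^{*}$-homothety action on $\cx$. First I set up the $G'$-orbit picture in $\cx^o$: since $G$ is generated by $G'$ and the centre $\bc^{*}\cdot I_E$ (which acts on $Q$ by the $m$-th power of scalar homothety), $\cx^o = G\cdot\det = \bc^{*}\cdot(G'\cdot\det)$, and each orbit $z\cdot(G'\cdot\det)$ is closed in $\cx$ by Proposition~\ref{1.4}. A direct check shows that distinct cosets in $\bc^{*}/\mu_m$ give distinct $G'$-orbits in $\cx^o$, so $\cx^o$ is a $\bc^{*}$-family of closed $G'$-orbits of dimension $(m^2-1)^2$ (Corollary~\ref{1.3}). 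Therefore $\dim\cx = (m^2-1)^2 + 1$ and $\dim \cx//G' = 1$, so $\bc[\cx]^{G'} \supsetneq \bc$; being graded with degree-zero part $\bc$, it contains a nonzero homogeneous element $f$ of some positive degree $d$.

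Next I will show $\pi(\partial\cx) = \{p_0\}$, where $p_0 := \pi(0)$. A dimension argument---any $G'$-orbit $G'\cdot y$ whose closure contains some $z\cdot(G'\cdot\det)$ would have dimension $> (m^2-1)^2$, hence be dense and open in $\cx$, forcing $G'\cdot y \subset \cx^o$---shows that each fibre of $\pi$ lies entirely in $\cx^o$ or entirely in $\partial\cx$. Hence $\pi(\partial\cx)$ is disjoint from the open dense $\pi(\cx^o)$ and is a proper closed (and so finite) subset of the $1$-dimensional variety $\cx//G'$. Because $\pi$ is $\bc^{*}$-equivariant and $\partial\cx$ is $\bc^{*}$-stable, the finite set $\pi(\partial\cx)$ consists of $\bc^{*}$-fixed points; but the grading on $\bc[\cx]^{G'}$ with degree-zero part $\bc$ makes $p_0$ the unique $\bc^{*}$-fixed closed point of $\cx//G'$, so $\pi(\partial\cx) \subseteq \{p_0\}$, with equality since $0\in\partial\cx$.

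Finally, since $f$ is homogeneous of positive degree, $f(0) = 0$, so, viewed as a function on $\cx//G'$, $f$ vanishes at $p_0$ and therefore on $\pi^{-1}(p_0) \supseteq \partial\cx$. Hence $0 \neq f\in\mathcal{I}^{G'}$, as required. I expect the main obstacle to lie in the second step: simultaneously leveraging the closedness of $G'\cdot\det$ (Proposition~\ref{1.4}), the dimension count, and the $\bc^{*}$-action to control \emph{all} closed $G'$-orbits in $\partial\cx$ and force them to collapse onto $\{0\}$; everything else is standard bookkeeping with graded rings and GIT quotients.
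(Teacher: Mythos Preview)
Your proof is correct and follows the same overall strategy as the paper: pass to the GIT quotient $\cx//G'$, use the $\bc^*$-action together with the closedness of $G'\cdot\det$ (Proposition~\ref{1.4}) to show that $\partial\cx$ maps to the single cone point $\pi(0)$, and conclude that any homogeneous $G'$-invariant of positive degree lies in $\mathcal I$.

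The technical execution differs in one respect worth noting. The paper constructs an explicit $\bc^*$-equivariant map $\bar\sigma\colon\bc\to\cx//G'$, $z\mapsto\pi\bigl((zI)\cdot\det\bigr)$, checks $\bar\sigma^{-1}(0)=\{0\}$, and invokes Lemma~\ref{2.2} to conclude $\bar\sigma$ is finite and surjective; this single stroke both produces nonconstant invariants and shows that every closed $G'$-orbit in $\cx$ is of the form $G'\cdot\sigma(z)$, so the only one inside $\partial\cx$ is $\{0\}$. You instead get $\dim(\cx//G')=1$ from the dimension formula in Corollary~\ref{1.3}, and rule out mixing of fibres by the observation that a $G'$-orbit in $\partial\cx$ whose closure contained some $z\cdot(G'\cdot\det)$ would have dimension $\geq(m^2-1)^2+1=\dim\cx$ and hence meet $\cx^o$. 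Your route is a bit more elementary in that it bypasses Lemma~\ref{2.2}; the paper's route is more self-contained in that it never needs the explicit dimension of $G'\cdot\det$.
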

\begin{proof} Let $Z:=\cx//G'$, where (as earlier) $G'= \SL(E)$. Then,
$Z$ is
an irreducible affine variety with $\Bbb C^*$-action
coming from the action of $\Bbb C^*$ on $Q$ via: $z\cdot v=z^mv$.
Consider the $\Bbb C^*$-equivariant map $\sigma: \Bbb C \to \cx$,
$z\mapsto (zI)\odot \det,$ where ($(zI)\odot \det ) (e)= \det (ze),$
for any $e\in E$,  and $\Bbb C^*$ acts on $\Bbb C$ via: $z\cdot v=zv$.
Consider the composite map $\bar{\sigma}=\pi\circ \sigma: \Bbb C \to Z$,
 where $\pi:\cx \to \cx//G'$ is
the canonical projection. By Proposition ~\ref{1.4}, $(\bar{\sigma})^{-1}
\{0\}=\{0\}.$ Moreover, clearly $\bar{\sigma}$ is a dominant morphism since
$G\cdot \det$ is dense in $\cx$. Thus, by Lemma ~\ref{2.2}, $\bar{\sigma}$
is a finite (in particular, surjective) morphism. Moreover,
  no $G'$-orbit $S$ in $\partial
\cx\setminus \{0\}$ is closed in $\cx$. In fact, for any such $S$, $0\in
\bar{S}$:

Let $S'$ be a closed $G'$-orbit in $\bar{S}$. If $S'$ is nonzero,
 $S'=G'\cdot\sigma (z)$, for some $z\in \Bbb C^*$, since $\bar{\sigma}$ is surjective.
 But, $G'\cdot \sigma(z)\subset \cx^o$, whereas $S'\subset \partial \cx.$ This is a
 contradiction. Hence $0\in \bar{S}$.

Take any nonzero homogeneous polynomial $f_o\in \Bbb C[Z]=\Bbb C[\cx]^{G'}$ of
positive degree. Then, $f_o$ restricted to $\partial \cx//G'$ is identically zero,
since $\partial \cx//G'\simeq \{0\}.$ Hence, $f_o\in \mathcal{I}$. This proves the lemma.

\end{proof}
\begin{corollary} \label{2.5} For any nonzero homogeneous $f_o\in \Bbb C[\cx]^{G'}$ of positive
 degree, the zero set $Z(f_o)=\partial \cx$. In particular,
\[\surd\langle f_o\rangle = \mathcal I,\]
where $\langle f_o\rangle $ is the ideal of $\Bbb C[\cx]$ generated by
$f_o$.

Moreover, each irreducible component of $\partial \cx$ is of codimension one in $\cx$.
\end{corollary}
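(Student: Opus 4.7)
The plan is to bootstrap the argument of Proposition~\ref{2.4} to the two-sided equality $Z(f_o)=\partial\cx$, and then read off the radical statement from Hilbert's Nullstellensatz and the codimension statement from Krull's Hauptidealsatz.

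For the inclusion $\partial\cx\subset Z(f_o)$, I would reuse the proof of Proposition~\ref{2.4}: the finiteness of $\bar\sigma$ established there implies that every $G'$-orbit in $\partial\cx$ has $0$ in its closure, so the GIT quotient map $\pi:\cx\to Z:=\cx//G'$ collapses $\partial\cx$ to the single point $\pi(0)$. Under the identification $\bc[Z]=\bc[\cx]^{G'}$, this point corresponds to the augmentation ideal of the graded ring $\bc[Z]$, so any homogeneous $f_o$ of positive degree vanishes at $\pi(0)$ and therefore on all of $\partial\cx$.

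For the reverse inclusion $Z(f_o)\subset\partial\cx$, the key point is that $G=\bc^*\cdot G'$, with the central $\bc^*$ acting on $Q=S^m(E^*)$ via the character $z\mapsto z^m$ (this is immediate from the defining formula $(g\cdot q)(X)=q(g^t\cdot X)$ with $g=zI$). Writing an arbitrary point of $\cx^o=G\cdot\det$ as $(zI)(g'\cdot\det)=z^m(g'\cdot\det)$ with $g'\in G'$, the $G'$-invariance of $f_o$ and its homogeneity of degree $d$ as a function on the vector space $Q$ yield
\[
f_o\bigl((zI)\,g'\cdot\det\bigr)=z^{md}\,f_o(\det).
\]
Since $\cx^o$ is dense in $\cx$ and $f_o\neq 0$ in $\bc[\cx]$, we must have $f_o(\det)\neq 0$; hence $f_o$ vanishes nowhere on $\cx^o$, and so $Z(f_o)\subset\partial\cx$. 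Combining the two inclusions, $Z(f_o)=\partial\cx$.

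The formula $\sqrt{\langle f_o\rangle}=\mathcal I$ is then immediate from Hilbert's Nullstellensatz, since $\mathcal I$ is by construction the radical ideal of the reduced subvariety $\partial\cx$. Finally, $\bc[\cx]$ is a Noetherian integral domain ($\cx$ being irreducible and affine), and $f_o$ is a nonzero non-unit (it sits in the positive-degree part of the graded ring, hence in the augmentation ideal), so Krull's Hauptidealsatz gives that every minimal prime over $\langle f_o\rangle$ has height one; these minimal primes are exactly the ideals of the irreducible components of $V(f_o)=\partial\cx$, which therefore each have codimension one in $\cx$. Essentially no step presents a genuine obstacle; the only conceptual point to pin down is the dichotomy, forced by the decomposition $G=\bc^*\cdot G'$ combined with homogeneity, that a $G'$-invariant homogeneous $f_o$ of positive degree is either identically zero or nowhere zero on the single dense $G$-orbit $\cx^o$.
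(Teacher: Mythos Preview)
Your proof is correct and follows essentially the same approach as the paper. The only cosmetic differences are that the paper packages the reverse inclusion $Z(f_o)\subset\partial\cx$ as an instance of Lemma~\ref{2.3} (noting that $\bc f_o$ is a one-dimensional $G$-submodule of $\mathcal I$, since $f_o$ is $G'$-invariant and homogeneous), whereas you compute $f_o$ on $\cx^o$ directly; and the paper cites the geometric form of the principal ideal theorem from Shafarevich rather than Krull's Hauptidealsatz.
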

\begin{proof} By the last paragraph of the proof of the above proposition,
${(f_o)}_{|\partial \cx}\equiv 0$.  Thus, the first part of the corollary is
a particular case of Lemma ~\ref{2.3}.

 For
 the second part, observe that $f_o$ does not vanish anywhere on $\cx^o$
 since $f_o$ is $G'$-invariant and homogeneous. Moreover,
 $f_o\circ \bar{\sigma}: \Bbb C \to \Bbb C$ is surjective
 (being nonzero).  Now use
 [S, Theorem 7, page 76].

 \end{proof}
\begin{remark} {\em (a) The assertion in the above corollary, that each irreducible
component of $\partial \cx$ is of codimension one in $\cx$, can also be proved
by using Lemma ~\ref{4.6a}. (Observe that $G\cdot\det$ is affine by
Corollary ~\ref{1.3}, using
Matsushima's theorem.)

(b) Let $V$ be a nontrivial irreducible representation of $\GL(d)$ and let
$v_o\in V$ be such that $\SL(d)$-orbit of $v_o$ is closed. Then, it is easy to
see (by the same proof) that Lemma ~\ref{1.5}, Proposition ~\ref{2.4} and
Corollary ~\ref{2.5} remain true for the $\GL(d)$-orbit closure $X$ of
$v_o$.}
\end{remark}

\begin{theorem} \label{2.6} For any $m\geq 3$, $\cx=\overline{G\cdot \det}$ is not normal.
\end{theorem}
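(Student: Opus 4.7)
The plan is to show non-normality of the GIT quotient $Z := \cx /\!/ G'$; since $G'$ is reductive and $\cx$ is an affine $G'$-variety, normality of $\cx$ would imply that of $Z$, so it suffices to establish non-normality of $Z$. By Corollary~\ref{1.3}, $\dim G'\cdot\det = (m^2-1)^2$, while $\dim\cx = \dim(G\cdot\det) = (m^2-1)^2 + 1$, so $\dim Z = 1$ and $Z$ is an affine curve. The $\bc^*$-equivariant finite surjective map $\bar\sigma : \bc \to Z$, $z\mapsto [z^m\det]$, constructed in the proof of Proposition~\ref{2.4}, exhibits $\bc$ as a finite cover of $Z$; the normalization of $Z$ is a normal affine curve with non-trivial $\bc^*$-action and a $\bc^*$-fixed point (the image of $0$), hence isomorphic to $\bc$.

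Next, I would analyze $\bc[Z] = \bc[\cx]^{G'}$ as a graded ring. Dominance of $\bar\sigma$ makes $\bar\sigma^* : \bc[Z] \hookrightarrow \bc[t]$ injective, and the computation $(\bar\sigma^* f)(z) = f(z^m\det) = z^{md} f(\det)$ for $f\in\bc^d[Z]$ shows $\bar\sigma^*(\bc^d[Z])\subseteq \bc\cdot t^{md}$, so each graded piece $\bc^d[Z]$ is of dimension at most $1$. Moreover, $\bc^d[\cx]^{G'}$ is a quotient of $S^d(S^m(E))^{\SL(E)}$, and the classical fact that $\SL_N$-invariants in $(\bc^N)^{\otimes k}$ exist only when $N\mid k$ (here $N = m^2$ and $k = dm$) forces $\bc^d[Z] = 0$ unless $m\mid d$. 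Consequently $\bc[Z]\cong \bc[u^S]$ as graded rings, where $u$ is a variable of weight $m$ and $S := \{\tilde d \ge 0 : \bc^{m\tilde d}[\cx]^{G'}\neq 0\}\subseteq \bz_{\geq 0}$ is the submonoid of degrees supporting a nonzero invariant.

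The semigroup algebra $\bc[u^S]$ is integrally closed if and only if $S = d\,\bz_{\geq 0}$ for some $d\ge 1$ (i.e., $\bc[u^S]$ is a polynomial ring in one variable). For $m=2$ a plethystic check gives $S = 2\,\bz_{\geq 0}$ (generated by the discriminant of a quadratic form in $4$ variables, of degree $4 = 2m$), consistent with the normality of $\cx = Q$ in that case. For $m\ge 3$ the task is to verify that $S$ is not of this form, by exhibiting a $G'$-invariant of some degree $md_2$ not a multiple of the smallest positive element of $S$. This identification, via a plethystic/character-theoretic decomposition of $S^{\bullet}(S^m(E))$ combined with a non-vanishing check on $\cx$ (i.e., verifying the candidate invariants do not vanish at $\det$), is the principal technical obstacle.
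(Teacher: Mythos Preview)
Your setup matches the paper's: reduce to non-normality of $Z=\cx/\!/G'$, observe that $Z$ is a one-dimensional graded affine variety with each $\bc^d[Z]$ at most one-dimensional and supported in degrees divisible by $m$, so $\bc[Z]\simeq\bc[u^S]$ for a numerical submonoid $S\subset\bz_{\ge 0}$. But you stop exactly where the argument has to be finished, calling the verification that $S$ is not of the form $d\,\bz_{\ge 0}$ ``the principal technical obstacle.'' As written, this is a genuine gap: you have not proved the theorem.

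The paper closes this gap, and the idea you are missing is to compute the \emph{normalization} of $Z$ first rather than $S$ directly. Since $\cx^o=G\cdot\det$ is smooth and affine (Matsushima, using Corollary~\ref{1.3}), the inclusion $\bc[\cx]^{G'}\hookrightarrow\bc[\cx^o]^{G'}$ realizes $\bc[Z]$ inside its normalization. Frobenius reciprocity together with Lemma~\ref{1.2} gives $\bc^{\ge 0}[\cx^o]^{G'}=\bc[\hat f_o]$, a polynomial ring with generator $\hat f_o$ in degree $p_m m$, where $p_m=1$ if $\binom{m}{2}$ is even and $p_m=2$ otherwise. If $Z$ were normal, then $\hat f_o$ would already lie in $\bc[\cx]^{G'}$, forcing $S^{p_m m}(S^m(E))^{G'}\neq 0$; but Howe's plethysm result [Ho, Proposition~4.3(a)] says this space vanishes for $m\ge 3$, a contradiction.

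Your suggested endgame --- producing two elements of $S$ with gcd strictly smaller than the minimal positive element --- would require constructing explicit invariants \emph{and} checking they do not vanish on $\cx$, both of which are hard. The paper sidesteps this entirely: it determines the generator of the normalization via representation theory of the (known) isotropy $G_{\det}$, and then shows that generator is \emph{absent} from $\bc[\cx]^{G'}$ by a single plethysm vanishing.
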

\begin{proof} Assume that $\cx$ is normal, then so would be
$Z=\cx//G'$. By Matsushima's theorem, since the isotropy of
$\det$ is reductive (cf. Corollary ~\ref{1.3}), $\cx^o$ is an affine
variety. By the Frobenius reciprocity, \beqn \bc
[\cx^o]^{G'}\simeq \oplus_{n\in \bz}\, V(n\delta)\otimes
[V(n\delta)^*]^{G_{\det}}, \eeqn where $V(n\delta)$ is the
irreducible $G$-module with highest weight corresponding to the
partition $(n\geq \dots \geq n)$ ($m^2$ factors). By Lemma
~\ref{1.2}, if $m(m-1)/2$ is even, $[V(n\delta)^*]^{G_{\det}}$ is
one dimensional, for all $n\in \bz$. If  $m(m-1)/2$ is odd,
\begin{align}
\dim [V(n\delta)^*]^{G_{\det}}&=1, \,\,\text{if $n$ is even}\\
&=0, \,\,\text{if $n$ is odd}.
\end{align}
For $d\in \Bbb Z_+$, let $\bc^d[\cx^o]$ denote the subspace of $\bc[\cx^o]$
such that, for any $z\in \bc^*$, the matrix $zI$ acts via $z^{md}$. Let $\hat{f}_o\in \bc^{p_mm}[\cx^o]^{G'}$ be a nonzero
element,  where $p_m=1$ if $m(m-1)/2$ is even and $p_m=2$ if
$m(m-1)/2$ is odd. Then, clearly,
\[\bc^{\geq 0}[\cx^o]^{G'}\simeq \oplus_{n\in \bz_+}\,\bc
\hat{f}_o^n.\] Now, $\bc[\cx]^{G'}\subset \bc[\cx^o]^{G'}$ is
a homogeneous subalgebra. Let $d_o>0$ be the smallest integer such
that $f_o=\hat{f}_o^{d_o}\in \bc[\cx]^{G'}$. (Such a $d_o$ exists by Proposition
~\ref{2.4}.) Since, by
assumption, $\bc[\cx]^{G'}$ is a normal ring, $\hat{f}_o\in
\bc^{p_mm}[\cx]^{G'}$. In particular, from the surjectivity
$\bc[Q]\twoheadrightarrow \bc[\cx]$, we would get
$\bc^{p_mm}[Q]^{G'}\neq 0,$ hence $S^{p_mm}(Q^*)^{G'}\neq
0.$ This contradicts [Ho, Proposition 4.3(a)]. Thus, $Z$ (and hence
$\cx$) is not normal.
\end{proof}

\begin{corollary} \label{2.7} For any $m\geq 3$,
and any nonzero homogeneous $f_o\in \Bbb C[\cx]^{G'}$ of positive
 degree,
$\langle f_o\rangle$ is not a radical ideal of $\bc[\cx].$
\end{corollary}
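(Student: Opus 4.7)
I would argue by contradiction. The key is that if $\langle f_o\rangle$ were radical, I could iteratively ``extract $N$-th roots'' of $f_o$ inside $\bc[\cx]$ (where $f_o = \hat f_o^N$) and eventually force $\hat f_o$ itself into $\bc[\cx]^{G'}$, contradicting the Howe-type vanishing already used in the proof of Theorem~\ref{2.6} (which says the smallest positive exponent $d_o$ with $\hat f_o^{d_o} \in \bc[\cx]^{G'}$ is $\geq 2$).

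For the setup, Corollary~\ref{2.5} yields $V(f_o) = \partial \cx$ set-theoretically, hence $\cx^o = D(f_o)$ and $\bc[\cx^o] = \bc[\cx][f_o^{-1}]$. Since $f_o$ is $G'$-invariant, taking $G'$-invariants commutes with this localization to give $\bc[\cx^o]^{G'} = \bc[\cx]^{G'}[f_o^{-1}]$. Writing $f_o = \hat f_o^N$ (so $N \geq d_o \geq 2$), the element $\hat f_o \in \bc[\cx^o]^{G'}$ must admit an expression $\hat f_o \cdot f_o^k = \hat f_o^{1+kN} \in \bc[\cx]^{G'}$ for some integer $k \geq 0$; the case $k = 0$ would give $\hat f_o \in \bc[\cx]^{G'}$, impossible by $d_o \geq 2$, so $k \geq 1$.

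Under the assumption that $\langle f_o\rangle$ is radical, I would then perform a descending induction on $j \in \{k, k-1, \dots, 1\}$, showing that $\hat f_o^{1 + jN} \in \bc[\cx]$ implies $\hat f_o^{1 + (j-1)N} \in \bc[\cx]$. Setting $h_j := \hat f_o^{1+jN}$, the identity
\[ h_j^N = (\hat f_o^N)^{1+jN} = f_o^{1+jN} \in \langle f_o\rangle \]
would place $h_j$ in $\sqrt{\langle f_o\rangle} = \langle f_o\rangle$. Writing $h_j = f_o \cdot h'$ with $h' \in \bc[\cx]$ and computing the quotient inside the overring $\bc[\cx^o]$ gives $h' = \hat f_o^{1 + (j-1)N}$, so this power of $\hat f_o$ lies in $\bc[\cx]$. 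Iterating down to $j = 0$ yields $\hat f_o \in \bc[\cx] \cap \bc[\cx^o]^{G'} = \bc[\cx]^{G'}$, the desired contradiction.

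The principal subtlety is the ``division by $f_o$'' in the inductive step: the factorization $h_j = f_o h'$ lives in the domain $\bc[\cx]$, but the explicit identification $h' = \hat f_o^{1+(j-1)N}$ is transparent only inside the overring $\bc[\cx^o]$, so the canonical inclusion $\bc[\cx] \hookrightarrow \bc[\cx^o]$ (from density of $\cx^o$ in $\cx$) is used to transport the formula back. Beyond this, the argument amounts to the observation that radicality of $\langle f_o\rangle$ would force $\bc[\cx]$ to be closed under taking $N$-th roots of elements of the form $f_o^\bullet$, incompatible with the ``missing $N$-th root'' $\hat f_o \in \bc[\cx^o]^{G'}\setminus \bc[\cx]^{G'}$.
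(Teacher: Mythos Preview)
Your argument is correct, and it reaches the same contradiction as the paper but by a somewhat different route. The paper argues more abstractly: assuming $\langle f_o\rangle$ is radical, it shows that \emph{any} $h\in\bc(\cx)$ integral over $\bc[\cx]$ must already lie in $\bc[\cx]$ (write $h=h_1/f_o^{d}$ in lowest terms, clear denominators in an integral equation to get $h_1^n\in\langle f_o\rangle$, then radicality forces $h_1\in\langle f_o\rangle$, contradicting minimality). This proves $\cx$ is normal, contradicting Theorem~\ref{2.6}. Your approach instead applies this idea to the single explicit element $\hat f_o$ (which is integral since $\hat f_o^N=f_o$) and unrolls the division step as an explicit descending induction on the exponent, landing directly on the Howe vanishing $d_o\geq 2$ from the proof of Theorem~\ref{2.6} rather than on the normality statement itself. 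The paper's version is slicker and yields the pleasant side statement ``radicality of $\langle f_o\rangle$ $\Rightarrow$ normality of $\cx$''; yours is more hands-on and avoids invoking the general integral-closure formalism, at the cost of re-deriving a special case of it.
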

\begin{proof} Let $\bc(\cx)=\bc(\cx^o)$ be the function field of $\cx$ (or $\cx^o$). As in the proof of the above theorem,
$\cx^o$ is affine and, of course, normal (in fact, smooth). Take a
function $h\in \bc (\cx)$ which is integral over $\bc[\cx]$. Since $\cx^o$
is normal, $h\in \bc[\cx^o]$. If $h\notin \bc[\cx]$, we can write
$h=h_1/f_o^{d_o}$, for some $d_o> 0$ and $h_1\in \bc[\cx]\setminus
\langle f_o\rangle$ (cf. [S, Page 50] and Corollary ~\ref{2.5}).
From this (and since $h$ is integral over $\bc[\cx]$) we see that
$h_1^d\in \langle f_o\rangle$, for some $d> 0$. If $\langle
f_o\rangle$ were a radical ideal, we would have $h_1 \in
\langle f_o\rangle$. This contradicts the choice of $h_1$. Hence
$h\in \bc[\cx]$. Thus, $\cx$ is normal, contradicting Theorem
~\ref{2.6}. This proves the corollary.
\end{proof}
\begin{remark} {\em The saturation property fails  for
$\bc[\cx]$ for $m=2$.

By [GW, Page 296], as modules for $\GL(d)$ (for any $d\geq 1$),
\[ S(S^2(\bc^d))\simeq \oplus_{\mu \in 2\sum_{i=1}^d\bz_+\omega_i}\,V(\mu),\]
where $\omega_i:=\epsilon_1+ \cdots +\epsilon_i$ is the $i$-th fundamental weight of $\GL(d)$.
Observe that, for $m=2$, since $\cx=Q$, we have $\bc[\cx]=S(S^2(E))$.
Thus, $V(2\omega_2)$ appears in $S^2(S^2(E))$, but $V(\omega_2)$
does not appear in $S^1(S^2(E))$.}
\end{remark}

\section{Isotropy of Permanent}

Consider the space $\fv$ of dimension $m$ as in Section 1. Fix a positive integer
$n<m$. Choose a basis
 $\{e_{1}, \dots ,e_m\}$ of $\fv$ and consider the subspace $\fv_1$ of dimension $n$
 spanned by $\{e_{m-n+1}, \dots, e_m\}$.
 We identify $\End \fv_1$ with the space of $n\times n$-matrices (under the basis
 $\{e_{m-n+1}, \dots, e_m\}$). Then, the {\it permanent}  of a $n\times n$-matrix
 gives rise to the function $\perm \in S^n((\End \fv_1)^*).$ Consider the standard action of
 $\GL(\End \fv_1)$ on $S^n((\End \fv_1)^*)$. In particular, $\GL(\End \fv_1)$ acts on
 $\perm$.

Recall the following from [MM] (cf. also [B]).
\begin{proposition} \label{3.1} For $n \geq 3$, the isotropy of perm under the action of the
group $\GL(\End \fv_1)$ consists of the transformations
 \[ \tau : X\mapsto \lam X^* \mu , \] where $X^*$ is $X$ or $X^t$ and $\lam
,\mu$ belong to the subgroup $\hat{D}$ of
$\GL(\fv_1)$ generated by the permutation matrices together with  the diagonal matrices of
determinant 1.
\end{proposition}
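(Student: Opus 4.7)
My plan is to reduce Proposition~\ref{3.1} to two classical inputs: the description of linear automorphisms of the variety of rank-one matrices, and the explicit factorization of $\perm$ on such matrices.

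First I verify the easy direction directly. For a permutation matrix $P$ and diagonal matrix $D$, one has $\perm(PX)=\perm(XP)=\perm(X)$ by symmetry of $\perm$ in the rows and columns, and $\perm(DX)=\perm(XD)=\det(D)\perm(X)$ by multilinearity in the rows and columns. Combined with $\perm(X^t)=\perm(X)$ and the condition $\det(D_\lambda)\det(D_\mu)=1$ built into membership in $\hat D$, every $\tau$ of the asserted form stabilizes $\perm$.

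For the converse, let $\tau\in\GL(\End\fv_1)$ satisfy $\perm\circ\tau=\perm$. The crucial intermediate goal is: after possibly composing $\tau$ with the transpose, $\tau$ carries the cone $\mathcal{R}_1$ of matrices of rank $\le 1$ into itself. Granted this, the classical description of linear self-maps of the Segre cone $\mathcal{R}_1\subset\End\fv_1$ yields $A,B\in\GL(\fv_1)$ with $\tau(ab^t)=(Aa)(Bb)^t=A(ab^t)B^t$, which by linearity extends to $\tau(X)=AXB^t$ on all of $\End\fv_1$. Next, using $\perm(ab^t)=n!\prod_i a_ib_i$, invariance gives the polynomial identity
\[
\prod_{i=1}^n (Aa)_i\cdot\prod_{j=1}^n (Bb)_j \;=\; \prod_{i=1}^n a_i\cdot\prod_{j=1}^n b_j \qquad\text{in } \bc[a_1,\dots,a_n,b_1,\dots,b_n].
\]
Since each $(Aa)_i$ is a linear form in $a$ alone and each $(Bb)_j$ in $b$ alone, unique factorization in this polynomial ring forces each row of $A$ (and of $B$) to be supported on a single coordinate; thus $A=P_AD_A$ and $B=P_BD_B$ with $P_A,P_B$ permutation matrices and $D_A,D_B$ diagonal, subject to $\det(D_A)\det(D_B)=1$. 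Writing $A=c_1\lambda$ and $B^t=c_2\mu$ with $\lambda,\mu\in\hat D$ and scalars $c_1,c_2$ satisfying $(c_1c_2)^n=1$, the $n$-th root of unity $c_1c_2$ can be absorbed into $\lambda$ while keeping $\lambda\in\hat D$ (since multiplying a det-$1$ diagonal by an $n$-th root of unity preserves the determinant); this yields $\tau(X)=\lambda X\mu$ with $\lambda,\mu\in\hat D$, and the alternative $\tau(X)=\lambda X^t\mu$ arises symmetrically when $\tau$ swaps the two rulings of the Segre.

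The step I expect to be the main obstacle is the rank-one preservation. The natural attack is to single out $\mathcal{R}_1\setminus\{0\}$ as a distinguished subvariety of the hypersurface $\{\perm=0\}$---for example as a maximal-dimensional irreducible component of its singular locus, or of the common zero locus of the permanental cofactors $\partial\perm/\partial x_{i,j}$---and then to rule out extraneous components; this is the technical core of the Marcus--Minc theorem cited as [MM]. The hypothesis $n\ge 3$ is essential here: when $n=2$, $\perm$ and $\det$ differ by a sign change on one entry, so the stabilizer is conjugate to $G_{\det}$ of Proposition~\ref{1.1}, which is strictly larger than the group described above.
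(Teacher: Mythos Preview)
The paper does not prove Proposition~\ref{3.1}; it is quoted from Marcus--Minc~[MM] (with~[B] as an alternative reference), so there is no argument in the paper to compare against directly.

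Your overall architecture is the right shape, and the endgame---once $\tau(X)=AXB^t$ or $AX^tB^t$ is established, use $\perm(ab^t)=n!\prod_i a_i\prod_j b_j$ and unique factorization in $\bc[a,b]$ to force $A,B$ monomial---is correct. The gap is in your proposed reduction step. You suggest singling out the rank-one cone $\mathcal R_1$ as a distinguished subvariety of $\{\perm=0\}$, e.g.\ as a component of its singular locus; but $\mathcal R_1$ is \emph{not} contained in $\{\perm=0\}$ at all. Your own formula gives $\perm(ab^t)=n!\prod_i a_i\prod_j b_j$, which is nonzero for generic $a,b$ (for instance $\perm(\mathbf 1\,\mathbf 1^t)=n!$). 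So a generic rank-one matrix lies off the hypersurface, and neither $\{\perm=0\}$ nor its singular locus nor the common zero set of the $\partial\perm/\partial x_{i,j}$ can recover $\mathcal R_1$. The proposed mechanism for rank-one preservation therefore cannot work as stated.

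The invariant actually exploited in~[MM] is different: the maximal linear subspaces contained in $\{\perm=0\}$. For $n\ge 3$ one shows these are exactly the $2n$ subspaces $\{X:\text{row }i=0\}$ and $\{X:\text{column }j=0\}$, each of dimension $n(n-1)$. A permanent-preserving $\tau$ must permute this finite family, and the incidence pattern among them (two subspaces of the same type meet in dimension $n(n-2)$, while a row-type and a column-type meet in dimension $(n-1)^2$) forces $\tau$ either to preserve the row/column dichotomy or to swap the two families globally. From this one extracts permutations of rows and of columns (or a transpose), reducing to $\tau(X)=AXB$ or $\tau(X)=AX^tB$; your UFD argument on rank-one matrices then finishes the determination of $A$ and $B$. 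The hypothesis $n\ge 3$ is used precisely in classifying those maximal linear subspaces, consistent with your closing remark about $n=2$.
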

Lemma ~\ref{1.2} and its proof give the following.
\begin{lemma} \label{3.1a} The determinant of the above map $\tau : X\mapsto \lam X^*\mu$ is given by
\begin{align*} \det \tau &= (-1)^{\frac{n(n-1 )}{2}}\, (\det \lam)^n\, (\det \mu)^n,
\quad\text{if }X^*=X^t,\\ &= (\det \lam)^n\, (\det \mu)^n, \quad\text{if }X^*=X.
\end{align*}

If particular, if $n=2k$, for an odd integer $k$, then, \begin{alignat*}{2} \det \tau &= -1,
\qquad \text{if} \quad X^*=X^t,\\ &= 1, \qquad \text{if}\quad X^*=X . \end{alignat*}
\end{lemma}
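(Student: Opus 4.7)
The proof follows almost immediately from the argument used for Lemma~\ref{1.2}, now carried out on $\End \fv_1$ (of dimension $n$) without the $\SL$ restriction on the factors. I will proceed in three short steps.

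First, I will handle the case $X^* = X$. The map $\tau : X \mapsto \lambda X \mu$ is exactly the map analyzed in Lemma~\ref{1.2}, with $\mathfrak{v}$ replaced by $\fv_1$, $A$ replaced by $\lambda$, and $B$ replaced by $\mu$. Repeating the calculation in the proof of Lemma~\ref{1.2} verbatim, one obtains
\[
\det \tau \;=\; \prod_{i,j=1}^{n}\lam_i\mu_j \;=\; (\det \lam)^n(\det \mu)^n,
\]
where the $\lam_i$'s and $\mu_j$'s are the eigenvalues of $\lam$ and $\mu$. No determinant-one hypothesis was used to reach this point in Lemma~\ref{1.2}, so the formula applies here.

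Second, I will handle the transpose case $X^* = X^t$. Write $\tau$ as the composition $\tau = \tau_1 \circ T$, where $T : X \mapsto X^t$ on $\End \fv_1$ and $\tau_1 : X \mapsto \lam X \mu$. The determinant of $T$ was computed (in the easy part of the proof of Lemma~\ref{1.2}) to be $(-1)^{n(n-1)/2}$, since $T$ permutes the basis $\{e_{i,j}\}$ by swapping $e_{i,j}$ with $e_{j,i}$, fixing the $n$ diagonal elements and producing $\binom{n}{2}$ transpositions. Multiplying by the formula from Step 1 gives
\[
\det \tau \;=\; (-1)^{n(n-1)/2}\,(\det \lam)^n(\det \mu)^n.
\]

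Third, for the particular case $n = 2k$ with $k$ odd, note that elements of $\hat D$ (permutation matrices times unit-determinant diagonal matrices) have determinant $\pm 1$, so $(\det \lam)^n = (\det \mu)^n = 1$ because $n$ is even. Furthermore, $n(n-1)/2 = k(2k-1)$ has the same parity as $k$, which is odd, so $(-1)^{n(n-1)/2} = -1$. Substituting into the two formulas above yields the stated values $-1$ and $1$.

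There is no genuine obstacle here: the content is entirely a bookkeeping consequence of Lemma~\ref{1.2}, and the only thing to verify is the parity computation $n(n-1)/2 \equiv k \pmod{2}$ when $n = 2k$, which is immediate.
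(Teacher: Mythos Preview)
Your proof is correct and follows exactly the route the paper indicates: the paper simply states that ``Lemma~\ref{1.2} and its proof give the following,'' and you have written out precisely that argument---the eigenvalue product for $X\mapsto\lambda X\mu$, the $\binom{n}{2}$ transpositions for $X\mapsto X^t$, and the parity check for $n=2k$ with $k$ odd.
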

\begin{corollary} \label{3.2} Let $n\geq 3$. Consider the homomorphism
\[\gam : \hat{D}\times \hat{D}
\longrightarrow (\GL(\End \fv_1))_{\perm}, \,\,\gamma (\lam ,\mu ) (v\otimes f)
=\lam v\otimes  (\mu^{-1})^*f,\] for $v\otimes f\in \fv_1\otimes \fv_1^*=\End \fv_1,$
where $(\mu^{-1})^*$ denotes the map induced by $\mu^{-1}$ on the dual space $\fv_1^*$.
 Then, $\gam$ induces
an embedding  of groups \[ \bar{\gamma}:(\hat{D}\times
\hat{D})/\Theta_n \hookrightarrow {(\GL(\End \fv_1))}_{\perm} ,\]
where $\Theta_n$ acts on $\hat{D}\times \hat{D}$ via: $z\cdot (\lam
,\mu ) = (z\lam ,z\mu )$, for $z\in \Theta_n$.

Moreover, $\Imo \bar{\gamma}$ contains the identity component of
${(\GL(\End \fv_1))}_{\perm}$.

Further, if  $n=2k$, for an odd integer $k$, then, $\bar{\gamma}$ is an isomorphism onto
${(\SL(\End \fv_1))}_{\perm}$.
\end{corollary}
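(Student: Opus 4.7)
The plan is to verify the three assertions in order, each reducing to a short computation that leans on Proposition~\ref{3.1} and Lemma~\ref{3.1a}.

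First I would check that $\gam$ lands in $(\GL(\End\fv_1))_{\perm}$ and determine its kernel. By the same tensor calculation as in Lemma~\ref{1.2}, $\gam(\lam,\mu)$ acts on $X\in\End\fv_1$ by $X\mapsto \lam X\mu^{-1}$. Every element of $\hat D$ is a monomial matrix whose diagonal part has determinant $1$, so the invariance $\perm(\lam X\mu^{-1})=\perm(X)$ reduces to the elementary identities $\perm(PX)=\perm(XP)=\perm(X)$ for permutation $P$ and $\perm(DX)=(\det D)\,\perm(X)$ for diagonal $D$. Clearly $\gam$ is a group homomorphism, and if $\gam(\lam,\mu)=\text{id}$ then (taking $X=I$) $\lam=\mu$, whence $\lam$ commutes with every $X$, forcing $\lam=cI$ for a scalar $c$; the requirement $cI\in\hat D$ amounts to $c^n=1$. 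Thus $\ker\gam=\{(cI,cI):c\in\Theta_n\}$ is the diagonally embedded $\Theta_n$, yielding the injectivity of $\bar\gam$.

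For the identity component assertion, Proposition~\ref{3.1} writes every $\tau\in (\GL(\End\fv_1))_{\perm}$ in the form $\tau(X)=\lam X^*\mu$ with $X^*\in\{X,X^t\}$, and the subgroup $H$ of untwisted elements ($X^*=X$) is exactly $\Imo\gam$. The assignment $\tau\mapsto X^*\text{-type}$ is a homomorphism to $\bz/2$ (composing two transposed elements yields an untwisted one), so $H$ has index at most $2$ in $(\GL(\End\fv_1))_{\perm}$ and hence contains the identity component.

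Finally, assume $n=2k$ with $k$ odd. For untwisted $\tau$, Lemma~\ref{3.1a} gives $\det\tau=(\det\lam)^n(\det\mu)^n$; since $\det\lam,\det\mu\in\{\pm1\}$ and $n$ is even, this equals $1$, so $\Imo\gam\subset\SL(\End\fv_1)$. For a twisted element the same lemma gives $\det\tau=(-1)^{n(n-1)/2}=(-1)^{k(2k-1)}=-1$, since both $k$ and $2k-1$ are odd; no twisted element lies in $\SL(\End\fv_1)$. Combined with the previous step this yields $(\SL(\End\fv_1))_{\perm}=H=\Imo\gam$, and the injectivity from Step~1 upgrades this to the asserted isomorphism. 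The main obstacle is purely bookkeeping: characterizing $\hat D$ correctly as monomial matrices with determinant-$1$ diagonal part, and tracking the sign $(-1)^{n(n-1)/2}$ in the transposed case.
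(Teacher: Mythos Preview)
Your proposal is correct and follows exactly the route the paper intends: the corollary is stated without proof, as an immediate consequence of Proposition~\ref{3.1} and Lemma~\ref{3.1a}, and you have accurately filled in the kernel computation, the index-$\le 2$ argument for the identity component, and the determinant count in the case $n=2k$ with $k$ odd.
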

Since the isotropy $\SL(\End \fv_1)_{\perm}$ is not contained in any proper
parabolic subgroup of $\SL(\End \fv_1)$,  Kempf's theorem
[Ke, Corollary 5.1] gives the following result observed in [MS1, Theorem 4.7]:
\begin{proposition} For $n \geq 3$,
$\SL(\End \fv_1)$-orbit of perm inside $S^n((\End \fv_1)^*)$ is closed.

In particular, an irreducible $\SL(\End \fv_1)$-module $M$ occurs in
$\bc[\overline{\GL(\End \fv_1)
\cdot
\perm}]$ if and only if $M^{\SL(\End \fv_1)_{\perm}}\neq 0$ (cf. the proof of Corollary
~\ref{1.6}).
\end{proposition}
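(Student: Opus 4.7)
The plan is to apply Kempf's theorem exactly as in Proposition~\ref{1.4}: once we know that the isotropy $H:=\SL(\End \fv_1)_{\perm}$ is not contained in any proper parabolic subgroup of $\SL(\End \fv_1)$, closedness of the orbit follows immediately. To check this standard criterion, it suffices to verify that no proper subspace of the representation $\End \fv_1$ is stable under the identity component $H^o$, since a parabolic containing $H$ would correspond to a nontrivial $H^o$-stable flag.

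By Corollary~\ref{3.2}, $H^o$ is contained in $\Imo \bar\gamma$ and in particular contains all transformations of the form $v\otimes f\mapsto \lam v\otimes (\mu^{-1})^*f$ with $\lam,\mu$ diagonal matrices of determinant one. With respect to the basis $\{e_{i,j}=e_i\otimes e_j^*\}_{m-n+1\leq i,j\leq m}$ of $\End \fv_1$, these diagonal operators act by distinct characters on each line $\bc e_{i,j}$ (the weights being pairwise distinct for $n\geq 2$), so any $H^o$-stable subspace must be a coordinate subspace, i.e.\ spanned by some subset $T\subset\{(i,j)\}$. But $H^o$ also contains the image of $\hat D\times \hat D$, which includes the permutation matrices; for any two pairs $(i,j)$ and $(i',j')$ we can choose permutations $\sig,\tau$ with $\sig(i)=i'$, $\tau(j)=j'$, and then the corresponding element of $\Imo\bar\gamma$ sends $e_{i,j}$ to (a nonzero multiple of) $e_{i',j'}$. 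Hence $T$ must be either empty or the whole index set, so no proper subspace is stable, proving that $H$ lies in no proper parabolic. Kempf's theorem [Ke, Corollary 5.1] then gives that $\SL(\End \fv_1)\cdot \perm$ is closed in $S^n((\End \fv_1)^*)$.

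For the second assertion, I would follow the proof of Corollary~\ref{1.6} verbatim. Set $\ca:=\SL(\End \fv_1)$ and $\cb:=\GL(\End \fv_1)$, and let $\cw:=\overline{\cb\cdot \perm}$. The closedness established above makes $\ca\cdot \perm\cong \ca/\ca_{\perm}$ an affine variety, so by Frobenius reciprocity
\[
\bc[\ca/\ca_{\perm}] \;\simeq\; \bigoplus_{M} M\otimes (M^*)^{\ca_{\perm}},
\]
the sum running over irreducible $\ca$-modules $M$. The analog of Lemma~\ref{1.5} applies here: because $\cw$ is the $\bc^*$-stable closure of $\cb\cdot \perm$ and $\bc^*\cdot(\ca\cdot \perm)$ is dense in $\cw$, the graded restriction map $\bc^d[\cw]\to \bc[\ca\cdot \perm]$ is injective for each $d\geq 0$. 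Thus an irreducible $\ca$-module $M$ appears in $\bc[\cw]$ if and only if it appears in $\bc[\ca/\ca_{\perm}]$, which by the displayed decomposition is equivalent to $M^{\ca_{\perm}}\neq 0$.

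The only step requiring any care is the verification that $H^o$ has no nontrivial invariant subspace; the rest is a mechanical transcription of the det-case arguments. This step is essentially combinatorial (distinct torus weights plus transitivity of $S_n\times S_n$ on pairs $(i,j)$) and presents no real obstacle for $n\geq 3$.
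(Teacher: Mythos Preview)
Your overall strategy is exactly the paper's: show that $H:=\SL(\End\fv_1)_{\perm}$ lies in no proper parabolic and invoke Kempf's theorem, then copy the argument of Corollary~\ref{1.6}. The second half is fine. The first half, however, contains a genuine slip.

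You write that it suffices to show no proper subspace of $\End\fv_1$ is $H^o$-stable, and then claim ``$H^o$ also contains the image of $\hat D\times \hat D$, which includes the permutation matrices.'' That is false: by Corollary~\ref{3.2} the identity component $H^o$ is precisely the image of the \emph{diagonal} part of $\hat D\times\hat D$ (a torus), and the permutation elements lie in other components. Since $H^o$ is a torus acting with the weights $(\lam,\mu)\mapsto \lam_i\mu_j^{-1}$, \emph{every} coordinate subspace $\bigoplus_{(i,j)\in T}\bc e_{i,j}$ is $H^o$-stable, so your stated criterion fails outright.

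The fix is immediate: argue with $H$ rather than $H^o$. A proper parabolic containing $H$ stabilizes a proper subspace $W$; since $H^o\subset H$, $W$ is a coordinate subspace by your torus-weight argument. Now use permutation-type elements of $H$ (not $H^o$) to force $W=0$ or $W=\End\fv_1$. One must also check these elements lie in $\SL(\End\fv_1)$: by Lemma~\ref{3.1a}, $\det\bar\gamma(\sig,\tau)=(\operatorname{sgn}\sig\cdot\operatorname{sgn}\tau)^n$, so for odd $n$ not every pair works; but since $n\geq 3$ one may always choose $\sig,\tau$ to be even permutations (e.g.\ $3$-cycles) sending any prescribed index to any other, which suffices.
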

 By exactly the same proof as that of Theorem ~\ref{2.6}, we get the following:

\begin{theorem} For $n\geq 3$, the subvariety  $\overline{\GL(\End \fv_1)
\cdot
\perm} \subset S^n((\End \fv_1)^*)$ is not normal.
\end{theorem}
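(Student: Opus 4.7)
The plan is to mimic the proof of Theorem~\ref{2.6} line by line in the permanent setting. Write $W := \End\fv_1$ (a vector space of dimension $n^2$), $H := \GL(W)$, $H' := \SL(W)$, $Q_n := S^n(W^*)$, $\cy^o := H\cdot\perm$, and $\cy := \overline{\cy^o}\subset Q_n$.

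First I would transport Proposition~\ref{2.4} to this setting: the ideal of the boundary $\partial\cy := \cy\setminus\cy^o$ in $\bc[\cy]$ contains a nonzero $H'$-invariant. The argument of Section~2 is formal and goes through verbatim, once one notes that its one nontrivial input -- closedness of $H'\cdot\perm$ inside $Q_n$ -- is precisely the proposition immediately following Corollary~\ref{3.2}.

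Next, assume for contradiction that $\cy$ is normal; then so is the GIT quotient $\cy//H'$. By Corollary~\ref{3.2}, the identity component of the isotropy $H_{\perm}$ is a finite quotient of $\hat{D}\times\hat{D}$, which is reductive, so $H_{\perm}$ itself is reductive and Matsushima's theorem implies $\cy^o$ is affine. Frobenius reciprocity then yields, as a graded $H$-module,
\[
\bc[\cy^o]^{H'}\;\simeq\;\bigoplus_{k\in\bz}\,V_H(k\delta)\otimes[V_H(k\delta)^*]^{H_{\perm}},
\]
where $V_H(k\delta)$ is the irreducible $H$-module with highest weight $(k,\ldots,k)$ ($n^2$ entries). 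Using Lemma~\ref{3.1a} in place of Lemma~\ref{1.2}, I would check that $[V_H(k\delta)^*]^{H_{\perm}}$ is at most one-dimensional for each $k$ and nonzero precisely on a sublattice $p_n\bz\subset\bz$ for some integer $p_n\in\{1,2\}$, determined by the parity of $n(n-1)/2$ together with the sign computation $(\det\lam)^n(\det\mu)^n$ for $\lam,\mu\in\hat{D}$. Consequently $\bc^{\geq 0}[\cy^o]^{H'}$ is the polynomial ring $\bc[\hat f_o]$ on a single generator $\hat f_o$ of minimal positive degree $p_n n$.

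Let $d_o>0$ be the least integer with $\hat f_o^{d_o}\in\bc[\cy]^{H'}$, which exists by the first step. If $\cy$ were normal, then the subring $\bc[\cy]^{H'}\subset\bc[\hat f_o]$ would also be normal, forcing $\hat f_o\in\bc[\cy]^{H'}$. Via the surjection $\bc[Q_n]\twoheadrightarrow\bc[\cy]$ this would produce a nonzero $H'$-invariant in $S^{p_n n}(Q_n^*) = S^{p_n n}(S^n(W^*))$, contradicting [Ho, Proposition~4.3(a)] applied with $\dim W = n^2$ (exactly the analog of the inequality used in the determinant case). The main obstacle is the bookkeeping in the Frobenius reciprocity step: one must correctly pin down the $\bc^*$-weight sublattice $p_n\bz$ on which $H_{\perm}$-invariants are nonzero, and confirm that $p_n n$ still lies in the range where Howe's vanishing applies; once that is in place, the remainder of the argument is formally identical to the proof of Theorem~\ref{2.6}.
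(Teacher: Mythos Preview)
Your proposal is correct and follows exactly the approach the paper intends: the paper itself says this theorem holds ``by exactly the same proof as that of Theorem~\ref{2.6},'' and your write-up faithfully transports each ingredient (Proposition~\ref{2.4} via closedness of the $\SL$-orbit, reductivity of the isotropy via Corollary~\ref{3.2} and Lemma~\ref{3.1a}, the polynomial structure of $\bc^{\geq 0}[\cy^o]^{H'}$, and the contradiction with Howe's vanishing). The only slip is cosmetic: $Q_n^* \cong S^n(W)$, not $S^n(W^*)$, though this is irrelevant for $\SL(W)$-invariants.
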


We prove the following lemma for its application in the next section.

\begin{lemma} \label{3.5} Let $C=(c_{i,j}) \in \End \fv_1$ be such that
\[\perm (X+C)=\perm (X), \,\,\text{for all}\, X\in \End \fv_1.\]
 Then, $C=0$. \end{lemma}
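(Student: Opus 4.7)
The plan is to use the multilinearity of the permanent in the rows of its argument to decompose $\perm(X+C)$ as a sum of ``mixed permanents,'' then to extract information from the degree-$(n-1)$ component in $X$.

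First I would write, using row multilinearity,
\[
\perm(X+C) \;=\; \sum_{S \subseteq \{1,\dots,n\}} P_S(X,C),
\]
where $P_S(X,C)$ denotes the permanent of the matrix whose $i$-th row is the $i$-th row of $X$ for $i\in S$ and the $i$-th row of $C$ for $i\notin S$. The term $S=\{1,\dots,n\}$ equals $\perm(X)$, so the hypothesis $\perm(X+C)=\perm(X)$ forces $\sum_{S\subsetneq\{1,\dots,n\}} P_S(X,C)=0$ identically in $X$. Grouping by $|S|$, each homogeneous component in $X$ must vanish separately; in particular the degree $(n-1)$ part gives
\[
\sum_{i=1}^{n} \sum_{j=1}^{n} c_{i,j}\, M_{i,j}(X) \;=\; 0 \quad\text{for all } X\in\End\fv_1,
\]
where $M_{i,j}(X)$ is the permanent of the $(n-1)\times(n-1)$ submatrix of $X$ obtained by deleting row $i$ and column $j$ (this comes from expanding the permanent that has the $i$-th row of $C$ along that row).

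The remaining step, which is the heart of the argument, is to show that the polynomials $\{M_{i,j}(X)\}_{1\le i,j\le n}$ in the entries $\{x_{k,\ell}\}$ are linearly independent. For this I would inspect monomials: every monomial appearing in $M_{i,j}$ has the form $\prod_{k\ne i} x_{k,\tau(k)}$ for some bijection $\tau:\{1,\dots,n\}\setminus\{i\}\to\{1,\dots,n\}\setminus\{j\}$, so such a monomial uses each row index except $i$ and each column index except $j$ exactly once. Hence the missing row index and missing column index are recoverable from the monomial, which means distinct $M_{i,j}$ and $M_{i',j'}$ involve disjoint sets of monomials and are therefore linearly independent in the polynomial ring $\bc[x_{k,\ell}]$.

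Combining these two steps, the identity $\sum_{i,j} c_{i,j}M_{i,j}(X)=0$ forces $c_{i,j}=0$ for every $i,j$, hence $C=0$. The one point that requires care is ensuring that the degree-$(n-1)$ component of the expansion really takes the stated form; beyond that, linear independence of the $M_{i,j}$ via monomial disjointness is the key observation and poses no real obstacle.
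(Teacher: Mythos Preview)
Your argument is correct. The multilinear expansion, the isolation of the degree-$(n-1)$ component, and the linear-independence step for the permanental minors $M_{i,j}$ via disjoint monomial supports are all sound; in particular, each monomial in $M_{i,j}$ uses every row index except $i$ and every column index except $j$, so $(i,j)$ is recoverable and the $M_{i,j}$ have pairwise disjoint supports.

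Your route, however, is genuinely different from the paper's. The paper argues by induction on $n$: it specializes $x_{1,2}=\cdots=x_{1,n}=0$ so that $\perm(X)=x_{1,1}\perm X^{(1,1)}$, then expands $\perm(X+C)$ along the first row and uses divisibility by $x_{1,1}$ to split off the identity $\perm(X^{(1,1)}+C^{(1,1)})=\perm(X^{(1,1)})$, which by induction forces $C^{(1,1)}=0$ (and similarly all $C^{(1,j)}=0$); substituting back then kills the $c_{1,j}$. Your proof avoids induction entirely by reading off the degree-$(n-1)$ component of the expansion directly and invoking linear independence of the $M_{i,j}$. The paper's argument is more hands-on and reuses the hypothesis recursively, while yours is a cleaner one-shot polynomial-identity argument; in exchange, the paper's method would adapt more readily to variants where the minors fail to be linearly independent, since it never relies on that fact.
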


\begin{proof} Take $X=(x_{i,j})$ with $x_{1,2} = \cdots  = x_{1,n} =0$. Then,
\begin{align}\label{e3.1} \perm (X) &=
\perm \begin{pmatrix} x_{1,1} &0 &\cdots &0 \\ x_{2,1} &x_{2,2} &
\cdots & x_{2,n}\\ \vdots &\vdots & &\vdots \\ x_{n,1} &x_{n,2}
&\cdots & x_{n,n} \end{pmatrix} \notag\\ &= x_{1,1} \perm X^{(1,1)}
,
\end{align}
where \[  X^{(1,1)} = \begin{pmatrix} x_{2,2} &\cdots &x_{2,n}\\ \vdots & &\vdots\\
x_{n,2}
&\cdots &x_{n,n} \end{pmatrix} . \]
By assumption, for any $X=(x_{i,j})$ as above,
\begin{align}  \label{e3.2}\perm (X) &= \perm (X+C) \notag\\
&= (x_{1,1}+c_{1,1}) \,
\perm \bigl( X^{(1,1)}+C^{(1,1)}\bigr) + c_{1,2}\, \perm \bigl( X^{(1,2)} +C^{(1,2)}\bigr)\notag\\
&\,\,\, +
\cdots + c_{1,n} \perm \bigl( X^{(1,n)} + C^{(1,n)}\bigr).
\end{align}
Now, $x_{1,1}$ divides the left side by ~\eqref{e3.1}, hence it must
also divide the right side of the above equation. Thus, \beqn
\label{e3.3} \sum^n_{j=1} c_{1,j} \perm \Bigl( X^{(1,j)} +
C^{(1,j)}\Bigr) = 0 \eeqn and (by equations
~\eqref{e3.1}-~\eqref{e3.3})
\[ \perm \Bigl( X^{(1,1)} + C^{(1,1)} \Bigr)= \perm \bigl(X^{(1,1)}\bigr) . \]
By induction, this gives \[ C^{(1,1)} \equiv 0 . \] By a similar
argument, \[ C^{(1,j)} = 0,\quad \,\text{for all}\, j . \]
Substituting  this in ~\eqref{e3.3}, we get
 \[
\sum^n_{j=1} c_{1,j} \perm X^{(1,j)} =0,\]
which gives  $c_{1,j} = 0$ for
all $ j$. Hence,
 \[C=0.\]
\end{proof}

\section{Functions on the orbit closure of $\fp$}

{\em We take in this and the subsequent sections} $3\leq n <m$.

 Recall the definition of the subspace $\fv_1\subset \fv$ from Section
 3. Let
 $\fv_1^{\bot}$ be the complementary subspace of $\fv$ with basis $\{e_{1}, \dots ,e_{m-n}\}$.
Consider the  function $\fp\in Q=S^m(E^*)$, defined by $\fp(X) =
x^{m-n}_{1,1} \perm (X^o),$ $X^o$ being the component of $X$ in the right down $n\times n$
corner $\left(\begin{smallmatrix} x_{1,1} &\quad &*\\ &\ddots \\ * && \underbrace{X^o}_n
\end{smallmatrix}\right)$, where any element of $\End \fv$ is represented by a $m\times m$-matrix
$X=(x_{i,j})_{1\leq i,j,\leq m}$ in the basis $\{e_i\}$.

Let $S$ be the subspace of $E$  spanned by
$e_{1,1}$ and $e_{i,j}$; $m-n+1 \leq i,j\leq m$; and $S^{\bot}$ be the complementary
subspace spanned by the set $\{e_{i,j}\}_{1\leq i,j,\leq m}\setminus
\{e_{1,1}, e_{i,j}\}_{m-n+1 \leq i,j\leq m}$ (where, as in Section 1,
$e_{i,j}:=e_i\otimes e_j^*$). Let $P$ be the maximal parabolic subgroup of
$G=\GL(E)$ which keeps the subspace $S^{\bot}$ of $E$ stable. Let $U_P$ be the
 unipotent radical of $P$ and let $L_P$ be the Levi subgroup of $P$ defined by:
 $L_P=L_P^1\times L_P^2$, where $L_P^1:=\Aut S^{\bot}$ and $L_P^2:=\Aut S$.

\begin{lemma} \label{4.1} The subgroups $L_P^1$ and $U_P$ act trivially on $\fp$.

Hence, $P\cdot \fp=L^2_P\cdot \fp$.
\end{lemma}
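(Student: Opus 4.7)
The plan is to reduce the lemma to two elementary observations together with the standard Levi decomposition of $P$.

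First, from the definition $\fp(X) = x_{1,1}^{m-n}\perm(X^o)$, the polynomial $\fp$ depends on $X$ only through the coordinates $x_{1,1}$ and $x_{i,j}$ ($m-n+1\leq i,j\leq m$), i.e., only through the image of $X$ under the projection $\pi_S\colon E = S^{\bot}\oplus S\twoheadrightarrow S$ with kernel $S^{\bot}$. Second, both $L_P^1$ and $U_P$ act trivially on the quotient $E/S^{\bot}\simeq S$: for $L_P^1 = \GL(S^{\bot})$ this is immediate from the description of the Levi $L_P = L_P^1\times L_P^2$, since $L_P^1$ acts on $S^{\bot}$ and as the identity on $S$; and for $U_P$ it is the defining property of the unipotent radical of the maximal parabolic $P$ stabilizing $S^{\bot}$ --- in block form with respect to $E = S^{\bot}\oplus S$, elements of $U_P$ look like $\bigl(\begin{smallmatrix} I & B\\ 0 & I\end{smallmatrix}\bigr)$, which preserves $S^{\bot}$ and induces the identity on $E/S^{\bot}$.

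Combining these, for any $g$ in $L_P^1$ or $U_P$ and any $X\in E$, one has $\pi_S(g^{-1}X) = \pi_S(X)$, so $(g\cdot \fp)(X) = \fp(g^{-1}X) = \fp(X)$, i.e., $\fp$ is fixed. The final assertion $P\cdot \fp = L_P^2\cdot \fp$ then follows at once from $P = U_P\cdot L_P = U_P\cdot L_P^1\cdot L_P^2$: any $p\in P$ factors as $p = u\,l_1\,l_2$ with $u\in U_P$, $l_i\in L_P^i$, and since $L_P^1$ commutes with $L_P^2$ inside $L_P$ and both $u$ and $l_1$ fix $\fp$, one has $p\cdot \fp = l_2\cdot \fp$.

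There is essentially no obstacle here; the entire argument is a straightforward bookkeeping of the parabolic structure together with the observation that $\fp$ only sees the $S$-direction. The only subtlety is to use the convention for the $\GL(E)$-action on $Q$ that makes $U_P$ (rather than the opposite unipotent) trivial on $E/S^{\bot}$, namely the standard action $(g\cdot q)(X) = q(g^{-1}X)$; with the extension to $\End E$ via transpose discussed in the introduction, the same conclusion holds once one keeps track of which block is upper-triangular.
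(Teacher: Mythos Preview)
Your proof is correct and follows essentially the same approach as the paper: both arguments rest on the observation that $\fp$ depends only on the $S$-component of $X$, together with the fact that $L_P^1$ and $U_P$ act trivially on $E/S^{\bot}\simeq S$, and then conclude via the Levi decomposition $P=U_P\cdot L_P^1\cdot L_P^2$. The only difference is packaging --- you phrase the first step via the projection $\pi_S$, while the paper writes out $X=X_1+X_2$ explicitly --- and your closing remark about conventions is unnecessary (the triviality of $U_P$ on $E/S^{\bot}$ is intrinsic to $U_P$, not a matter of how $G$ acts on $Q$).
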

\begin{proof} Take $g\in U_{P}$. Then, since $U_{P}$ acts via identity on $S^{\bot}$, and
$g(X_2) \in X_2 + S^{\bot}\,\text{for all}\, X_2\in S$, we have (for any $X\in E$)
\begin{align*} (g^{-1}\fp)X &= \fp(g
X)\\ &= \fp(gX_1+gX_2),\qquad\text{where $X=X_1+X_2$, $X_1\in S^{\bot}$, $X_2\in S$}\\
&= \fp(X_1+Y_1+X_2),\, \quad\text{ for some $Y_1\in S^{\bot}$}\\ &= \fp(X_2)\\ &= \fp(X).
\end{align*} For $g\in L^1_P$, \begin{align*} (g^{-1}\fp)X &= \fp(gX)\\ &=
\fp(gX_1+gX_2), \qquad
\text{where}\, X=X_1+X_2, X_1\in S^{\bot},\, X_2\in S\\ &= \fp(gX_1+X_2)\\ &= \fp(X_2)\\ &= \fp(X) . \end{align*}

This proves the lemma. \end{proof}

Since $G/P$ is a projective variety,
\[\cy:=G\cdot(\overline{P\cdot\fp})=\overline{G\cdot\fp}\subset Q.\]
 Thus, we have a proper surjective morphism
 \[ \phi: {G}\times_{P} \bigl(\overline{P\cdot \fp}\bigr)= G\times_P
 \bigl(\overline{L_{P}^2\cdot \fp}\bigr)\twoheadrightarrow \cy, \,\,[g,x]
 \mapsto g\cdot x,\]
 for $g\in G$ and $x\in \overline{P\cdot\fp}.$ Let (for any $d\geq 0$)
\beqn \label{e4.1}
 \bc^d\bigl[ \overline{L_P^2\cdot
\fp}\bigr] = \bigoplus_{\lambda\in D(L_P^2)}\, n_{\lam}(d)\,
V_{L_P^2}(\lam )^* , \eeqn where $\bc^d[\overline{L_P^2\cdot \fp}]$
denotes the space of homogeneous degree $d$-functions with respect
to the embedding $\overline{L_P^2\cdot \fp}
 \subset Q$,
$D(L_P^2)$ denotes the set of dominant characters for the group
$L_P^2$ (with respect to its standard diagonal subgroup) consisting of
$\lam=(\lam_1\geq \dots \geq \lam_{n^2+1})$ with $\lam_i\in \bz$, and
$V_{L_P^2}(\lam )$ is the irreducible $L_P^2$-module with highest weight
$\lam$.

\begin{theorem} \label{4.2} For any $\lambda\in D(L_P^2)$ and $d\geq 0$ such that
$n_\lam(d)>0$, we have $\lam_1\leq 0$.

Moreover, as $G$-modules,
\[ \bc^d[\cy] = \bigoplus_{\lambda\in D(L_P^2)}\, n_{\lam}(d)\,
V_{G}(\hat{\lam})^*,\]
where $\hat{\lam}:=(0\geq \dots \geq 0\geq \lam_1\geq  \dots\geq
\lam_{n^2+1})
\in D(G)$ (with initial $m^2-n^2-1$ zeroes).

Further, the $G$-equivariant morphism $\phi$ induces an isomorphism of
 $G$-modules:
 \[\phi^*: \bc[\cy]\to \bc[G\times_{P} \bigl(\overline{P\cdot \fp}
 \bigr)].\]
\end{theorem}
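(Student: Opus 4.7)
For part (1), note that $\fp$ depends only on the $S$-component of its argument, so $\fp\in S^m(S^*)\subset Q$; since $L_P^2=\GL(S)$ preserves $S^m(S^*)$, the orbit closure $Y:=\overline{L_P^2\cdot\fp}$ lies in $S^m(S^*)$, and $\bc^d[Y]$ is an $L_P^2$-equivariant quotient of $\bc^d[S^m(S^*)]=S^d(S^m(S))$. The latter decomposes as a sum of polynomial $\GL(S)$-representations $V_{L_P^2}(\mu)$ with $\mu_i\geq 0$, so in the dual notation $V_{L_P^2}(\lambda)^*$ can occur only when $\lambda_1\leq 0$.

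For parts (2) and (3), the plan is to compute $\bc^d[G\times_P Y]$ as a $G$-module via Frobenius reciprocity and then to verify that $\phi^*$ is an isomorphism. By Lemma~\ref{4.1}, together with the facts that $L_P^1$ commutes with $L_P^2$ and $L_P^2$ normalizes $U_P$, the subgroup $L_P^1U_P$ fixes every point of $L_P^2\cdot\fp$, hence of $Y$, hence acts trivially on $\bc[Y]$. Therefore every $P$-map $V_G(\mu)\to\bc[Y]$ factors through the coinvariants $V_G(\mu)_{L_P^1U_P}$, giving
\[\Hom_P(V_G(\mu),\bc^d[Y])=\Hom_{L_P^2}(V_G(\mu)_{L_P^1U_P},\bc^d[Y]).\]

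Now $V_G(\mu)_{U_P}$ is an irreducible $L_P$-module of highest $L_P$-weight $(\mu_{n^2+2},\ldots,\mu_{m^2};\mu_1,\ldots,\mu_{n^2+1})$ (obtained as $w_0^{L_P}w_0^G\mu$) with respect to the splitting $L_P=L_P^1\times L_P^2$. Its further $L_P^1$-coinvariants are nonzero exactly when the $L_P^1$-block $(\mu_{n^2+2},\ldots,\mu_{m^2})$ vanishes, i.e.\ when $\mu=\hat\lambda^*:=-w_0(\hat\lambda)$ for some $\lambda\in D(L_P^2)$ with $\lambda_1\leq 0$; in that case $V_G(\mu)_{L_P^1U_P}=V_{L_P^2}(-\lambda_{n^2+1},\ldots,-\lambda_1)=V_{L_P^2}(\lambda)^*$. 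Substituting into Frobenius gives $\bc^d[G\times_P Y]=\bigoplus_\lambda n_\lambda(d)\,V_G(\hat\lambda)^*$ as $G$-module.

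It then remains to show $\phi^*$ is an isomorphism, which yields (3) and, combined with the above, (2). Injectivity is immediate: any $f\in\bc[\cy]$ with $\phi^*f=0$ vanishes on the dense subset $G\cdot Y=\cy$, so $f=0$. The main obstacle is surjectivity. The plan is to first establish birationality of $\phi$ by a Lemma~\ref{3.5} argument: for any $h\in\GL(E)$ stabilizing $\fp$, the equation $\fp(h_{21}X_1+h_{22}X_2)=\fp(X_2)$ (valid for all $X_1\in S^\bot$ and $X_2\in S$) forces $h_{21}=0$, so $G_\fp\subset P$ and $\phi$ restricts to an isomorphism $G\times_P(L_P^2\cdot\fp)\simeq G\cdot\fp$. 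Combined with the surjectivity of the restriction map $r:\bc[\cy]\to\bc[Y]$ (holding since $Y$ is closed in the affine variety $\cy$), one argues via the Frobenius correspondence $\phi^*\leftrightarrow r$ that on each $V_G(\hat\lambda)^*$-isotype the map $\Hom_G(V_G(\hat\lambda)^*,\bc[\cy])\to\Hom_P(V_G(\hat\lambda)^*,\bc[Y])$ induced by $r$ is a bijection (injectivity of this map is the same argument as for $\phi^*$), establishing the isomorphism.
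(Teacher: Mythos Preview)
Your overall strategy matches the paper's: identify $\bc[G\times_P Y]$ with $\bigoplus_\lambda n_\lambda(d)\,V_G(\hat\lambda)^*$ via Frobenius/induction (the paper cites [Ku1, Lemma~8] for $H^0(G/P,V_{L_P^2}(\lambda)^*)\simeq V_G(\hat\lambda)^*$, which unwinds to your coinvariant calculation), observe that $\phi^*$ is injective since $\phi$ is surjective, and then argue surjectivity. Your argument for part~(1), via polynomiality of $S^d(S^m(S))$ as a $\GL(S)$-module, is slightly more direct than the paper's (which deduces $\lambda_1\le 0$ only after embedding highest weight vectors into $\bc[\cy]^*$).

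The gap is in the surjectivity of $\phi^*$. You assert that surjectivity of $r:\bc[\cy]\to\bc[Y]$, together with birationality of $\phi$, makes the map
\[
\Hom_G\bigl(V_G(\hat\lambda)^*,\bc[\cy]\bigr)\longrightarrow\Hom_P\bigl(V_G(\hat\lambda)^*,\bc[Y]\bigr)
\]
a bijection. Injectivity is fine, but surjectivity does not follow: surjectivity of a $P$-equivariant map $r$ does not guarantee that every $P$-map $V\to\bc[Y]$ lifts to a $G$-map $V\to\bc[\cy]$ (one would need to promote a $P$-section to a $G$-section, and $P$ is not reductive), and birationality of $\phi$ says nothing about regular functions absent normality of $\cy$, which is precisely what fails. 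The paper closes this gap by a different, purely representation-theoretic device that uses neither birationality nor lifting: it dualizes $r$ to an \emph{injection} $r^*:\bc[Y]^*\hookrightarrow\bc[\cy]^*$. Since $L_P^1U_P$ acts trivially on $\bc[Y]^*$, every $B_{L_P^2}$-highest weight vector of weight $\lambda$ in $\bc[Y]^*$ is automatically a $B$-highest weight vector of weight $\hat\lambda$ in $\bc[\cy]^*$, hence generates a copy of $V_G(\hat\lambda)$ there. This yields the lower bound $\dim\Hom_G(V_G(\hat\lambda)^*,\bc[\cy])\ge n_\lambda(d)$; combined with the upper bound from injectivity of $\phi^*$, one gets equality and hence the isomorphism. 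Note also that the inclusion $G_\fp\subset P$ (your birationality step) is proved in the paper only \emph{after} this theorem, as Proposition~\ref{4.8}, via a Bruhat-decomposition argument considerably more delicate than the one-line application of Lemma~\ref{3.5} you sketch (Lemma~\ref{3.5} concerns $S_1=\End\fv_1$, not $S=\bc e_{1,1}\oplus S_1$, and the block $h_{22}$ need not a priori be invertible).
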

\begin{proof}
 Observe
that, by Lemma ~\ref{4.1},  $\bc^d\bigl[ \overline{L_P^2\cdot \fp}\bigr]$
 is a $P$-module quotient
of $\bc^d[\overline{{G}\cdot \fp}]$ with
$U_P$ and $L_P^1$ acting trivially on $\bc^d\bigl[ \overline{L_P^2\cdot
\fp}\bigr]$. Thus, as $P$-modules,
\[\bc^d\bigl[ \overline{L_P^2\cdot \fp}\bigr]^*\simeq
\bigoplus_{\lambda\in D(L_P^2)}\, n_{\lam}(d)\,
V_{L_P^2}(\lam
)\hookrightarrow \bc^d[\cy]^*.\]
Take a nonzero $B_{L_P^2}$-eigenvector of weight $\lam$ in
$\bc^d\bigl[ \overline{L_P^2\cdot \fp}\bigr]^*$, where $B_{L_P^2}$ is the standard
Borel subgroup of $L_P^2$ consisting of upper triangular matrices. Then,
its image in $\bc^d[\cy]^*$ is a $B$-eigenvector of weight $\hat{\lam}$,
where $B$ is the standard Borel subgroup of $G$. In particular, for
any $\lam \in D(L_P^2)$ such that $n_\lam(d)>0, \hat{\lam}\in D(G)$
(since $\bc^d[\cy]^*$ is a $G$-module). Hence, $\lam_1\leq 0$ and
$\bigoplus_{\lambda\in D(L_P^2)}\, n_{\lam}(d)\,
V_{G}(\hat{\lam})\subset \bc^d[\cy]^*$. Dualizing, we get the $G$-module surjection:
\beqn \label{e4.2} \bc^d[\cy]\twoheadrightarrow \bigoplus_{\lambda\in D(L_P^2)}\, n_{\lam}(d)\,
V_{G}(\hat{\lam})^*.\eeqn
From the surjection $\phi$, we obtain  the $G$-module injective map:
\begin{align*}\phi^*: \bc^d[\cy] \hookrightarrow
&H^0(G/P, \bc^d\bigl[ \overline{L_P^2\cdot \fp}\bigr])\\
&= \bigoplus_{\lambda\in D(L_P^2)}\, n_{\lam}(d)\,
H^0(G/P, V_{L_P^2}(\lam)^*), \,\,\text{where $U_P$} \\
&\qquad \text{and $L_P^1$ act
 trivially on $V_{L_P^2}(\lam)^*$}\\
 &\simeq \bigoplus_{\lambda\in D(L_P^2)}\, n_{\lam}(d)\,
V_{G}(\hat{\lam})^*,
\end{align*}
where the last isomorphism follows from [Ku1, Lemma 8]. Combining the
injection
$\phi^*$ with \eqref{e4.2}, we get that $\phi^*$ is an isomorphism,
proving the theorem.
\end{proof}

\begin{proposition} \label{4.8} The isotropy of $\fp$ under the group $P$ is the same as that under the
group $G$. \end{proposition}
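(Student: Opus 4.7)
The plan is to establish the nontrivial inclusion $G_{\fp}\subset P_{\fp}$: any $g\in G$ with $g\cdot\fp=\fp$ must already lie in $P$, i.e., stabilize $S^{\bot}$. The strategy is to recognize $S^{\bot}$ as intrinsically attached to $\fp$, namely as the subspace of translation-invariance directions
\[
K:=\{Y\in E:\fp(X+Y)=\fp(X)\ \text{for all}\ X\in E\}.
\]
Any $g\in G_{\fp}$ will then automatically preserve $K$, and once we show $K=S^{\bot}$, we will get $g(S^{\bot})=S^{\bot}$, hence $g\in P$.

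First I would verify $K=S^{\bot}$. The inclusion $S^{\bot}\subset K$ is immediate since $\fp(X)=x_{1,1}^{m-n}\perm(X^o)$ depends only on the $S$-component of $X$. For the reverse inclusion, take $Y\in K$ and decompose $Y=Y^{\bot}+Y^{S}$; the equality $\fp(X+Y)=\fp(X)$ reduces to
\[
(x_{1,1}+y_{1,1})^{m-n}\perm(X^o+Y^o)=x_{1,1}^{m-n}\perm(X^o)\quad\text{for all}\ x_{1,1},X^o.
\]
Setting $x_{1,1}=0$ and using that $\perm$ is not identically zero forces $y_{1,1}=0$; the remaining identity $\perm(X^o+Y^o)=\perm(X^o)$ together with Lemma~\ref{3.5} yields $Y^o=0$. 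Hence $Y^{S}=0$, so $Y\in S^{\bot}$, completing the identification $K=S^{\bot}$.

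With this in hand, the stability of $K$ under $G_{\fp}$ is a formal manipulation. For $g\in G_{\fp}$ (so that $\fp(gZ)=\fp(Z)$ for all $Z$) and $Y\in K$, compute
\[
\fp(X+gY)=\fp\bigl(g(g^{-1}X+Y)\bigr)=\fp(g^{-1}X+Y)=\fp(g^{-1}X)=\fp(X),
\]
using the $g$-invariance of $\fp$, then $Y\in K$, then the $g$-invariance again. This gives $gY\in K$, and dimensionality forces $g(K)=K$, i.e., $g(S^{\bot})=S^{\bot}$, so $g\in P$. The main obstacle is the reverse inclusion $K\subset S^{\bot}$, which rests essentially on the rigidity of the permanent under translation (Lemma~\ref{3.5}); once that input is available, the rest is an abstract equivariance argument that uses nothing special about $\fp$ beyond its intrinsic characterization of $S^{\bot}$.
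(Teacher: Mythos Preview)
Your argument is correct and takes a genuinely different route from the paper. The paper proceeds via a Bruhat-type covering $G/P = W'_P\,U_P^-\,P/P$: writing an arbitrary element of $G_{\fp}$ (modulo $U_P L_P^1\subset P_{\fp}$) as $wur$ with $w\in W'_P$, $u\in U_P^-$, $r\in L_P^2$, it performs an explicit coordinate computation (invoking Lemma~\ref{3.5} twice) to force first $w=1$ and then $u=1$. Your approach instead characterizes $S^{\bot}$ intrinsically as the translation-invariance subspace $K$ of $\fp$, which is automatically $G_{\fp}$-stable by a two-line equivariance check; Lemma~\ref{3.5} enters only once, to pin down $K=S^{\bot}$. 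Your proof is shorter and more conceptual, and it would work verbatim for any $\fp$ whose ``invariance kernel'' $K$ coincides with $S^{\bot}$; the paper's approach, while heavier, sets up coordinates and Weyl-group bookkeeping that are in the same spirit as the subsequent computation in Proposition~\ref{4.5}.
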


\begin{proof} First of all $G/P = W'_PU^-_P\, P/P,$ where $U^-_P$
is the opposite  of the unipotent radical $U_P$
of $P$ and $W'_P$ is the set of all the  smallest coset
representatives  of $W/W_P$. (This follows since the right side is an open subset of
$G/P$ which is $T$-stable and contains all the $T$-fixed points of
$G/P$.)

Take $w\in W'_P$, $u\in U^-_P$, $r\in L^2_P$ such that $wur\cdot \fp
= \fp$. Then,
 \beqn \label{e4.3}
\fp(r^{-1}u^{-1}w^{-1}X) = \fp(X),\quad\text{ for any } \,X=X_1+X_2\in
E= S^{\bot}\oplus S. \eeqn
In particular, for  $X=wX_2$, we get
\beqn \label{e4.4} \fp(r^{-1}u^{-1}X_2) =
\fp(wX_2). \eeqn
We have $ u^{-1}X_2 = X_2,$ thus
 \beqn\label{e4.5} \fp(r^{-1}u^{-1}X_2) = \fp(r^{-1}X_2). \eeqn

Well order a basis of $S$ as $v_1,v_2,\dots ,v_d$ ($d=n^2+1$) and
also a basis $v_{d+1},\dots ,v_{m^2}$ of $S^{\bot}$. Then, $w$ can
be represented as the permutation $i\mapsto n_i$ with
\[ n_1 < \cdots < n_d, n_{d+1} <
\cdots < n_{m^2} . \]
 For $ X_2 = \sum^d_{i=1} z_iv_i\in S$,
\beqn \label{e4.5}
\fp(wX_2) = \fp\Bigl(
\sum^d_{i=1} z_i v_{n_i}\Bigr) = \fp\Bigl( \sum_{i\leq i_o} z_i
v_{n_i}\Bigr),
\eeqn
where $1\leq i_o\leq d$ is the maximum integer such that $n_{i_o} \leq
d$. In particular, $\fp(wX_2)$ only depends upon the variables $z_1,\cdots
,z_{i_o}$. Thus, by the identities \eqref{e4.4} -- \eqref{e4.5},
\[\fp\Bigl(
r^{-1} \sum^d_{i=1} z_i v_i\Bigr) = \fp\Bigl( \sum_{i\leq i_o} z_i
v_{n_i}\Bigr), \qquad \,\text{for any}\, z_i\in\bc,\]
 which gives
\[ \fp\Bigl( r^{-1} \sum^d_{i=1} z_iv_i\Bigr) =
\fp\Bigl( r^{-1}\Bigl( \sum^d_{i=1} z_iv_i + \sum_{d\geq j>i_o}
b_jv_j\Bigr)\Bigr), \quad\text{for any }b_j\in\bc.\] Thus,
\[\fp\Bigl(\sum^d_{i=1} z_iv_i\Bigr) =
\fp\Bigl(\sum^d_{i=1} z_iv_i + r^{-1}\sum_{d\geq j>i_o}
b_jv_j\Bigr). \]
 Applying Lemma ~\ref{3.5}, it is easy to see that
 $ \sum_{d\geq j> i_o} b_jv_j =0 $ (for any $b_j\in \bc$).  Thus, $i_o =d$, i.e.,
$w=1$.

Taking $X=X_2\in S$ in ~\eqref{e4.3}, we get (since $w=1$)
$\fp(r^{-1}X_2) = \fp(X_2)$, which is equivalent to $\fp(r^{-1}X) =
\fp(X)$, for all $ X\in E$.
 Thus, $r$ is in the isotropy of $\fp$ and hence $u$ is in the
isotropy of $\fp$, i.e., $ \fp(u^{-1}X) = \fp(X),$  for all $X=X_1+X_2\in
E$. This gives $\fp(X_1+X_2+Y_2) = \fp(X_1+X_2),$  where $Y_2 :=
u^{-1}X_1-X_1\in S$. Hence, $ \fp(X_2+Y_2) = \fp(X_2)$, for all $
X_2\in S$  and any $Y_2$ of the form $u^{-1}X_1-X_1$, for some
$X_1\in S^\bot$. Applying Lemma ~\ref{3.5} again, we see that
 $ Y_2 =0$, hence  $ u_{|_{S^{\bot}}} = \text{Id.}$ Thus,
$ u=1 .$  This proves the proposition since $U_P$ and $L^1_P$
stabilize $\fp$. \end{proof}

\begin{corollary} \label{4.4} The restriction $\phi_o$ of the map  $\phi$ to
$ {G}\times_{P} (P\cdot \fp )$ is a biregular isomorphism onto
  $G\cdot \fp$.

  Morepver,  $\phi^{-1}(G\cdot \fp)= {G}\times_{P} (P\cdot \fp ).$
\end{corollary}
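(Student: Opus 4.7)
The plan is to use Proposition~\ref{4.8} (which gives $G_{\fp}=P_{\fp}$) to realize both $G\times_P(P\cdot\fp)$ and $G\cdot\fp$ as the $G$-homogeneous space $G/P_{\fp}$ in a compatible way, so that $\phi_o$ becomes the identity map, and then to deduce the preimage statement by exhibiting $P\cdot\fp$ as a fiber of a natural morphism $G\cdot\fp\to G/P$.

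For the first (biregular isomorphism) part, I would first invoke $G_{\fp}=P_{\fp}$ from Proposition~\ref{4.8}, which gives the orbit isomorphism $P\cdot\fp\simeq P/P_{\fp}$. This yields
\[G\times_{P}(P\cdot\fp)\simeq G\times_{P}(P/P_{\fp})\simeq G/P_{\fp}\]
via $[g,pP_{\fp}]\mapsto gpP_{\fp}$; similarly, $G\cdot\fp\simeq G/G_{\fp}=G/P_{\fp}$. Both identifications are $G$-equivariant, and $\phi_o$ carries $[e,\fp]\mapsto\fp$, i.e.\ $eP_{\fp}\mapsto eP_{\fp}$. Since a $G$-equivariant morphism of transitive $G$-spaces is determined by the image of a single point, $\phi_o$ corresponds to the identity $G/P_{\fp}\to G/P_{\fp}$, and is therefore a biregular isomorphism onto $G\cdot\fp$.

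For the preimage assertion, I would first note that $G\cdot\fp$ is $G$-stable, so for any $[g,x]\in G\times_P\overline{P\cdot\fp}$ the condition $\phi([g,x])=gx\in G\cdot\fp$ is equivalent to $x=g^{-1}(gx)\in G\cdot\fp$. The claim therefore reduces to
\[(G\cdot\fp)\cap\overline{P\cdot\fp}=P\cdot\fp.\]
To establish this, I would consider the $G$-equivariant morphism $\pi:G\cdot\fp\to G/P$, $g\cdot\fp\mapsto gP$. This is well-defined precisely because $G_{\fp}=P_{\fp}\subset P$: if $g_1\cdot\fp=g_2\cdot\fp$, then $g_2^{-1}g_1\in G_{\fp}\subset P$, so $g_1P=g_2P$. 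Since $\pi^{-1}(eP)=P\cdot\fp$ and $\{eP\}$ is closed in $G/P$, the orbit $P\cdot\fp$ is closed in $G\cdot\fp$. The standard identity $\overline{A}^{X}=X\cap\overline{A}^{Y}$ for $A\subset X\subset Y$ then yields
\[(G\cdot\fp)\cap\overline{P\cdot\fp}=\overline{P\cdot\fp}^{\,G\cdot\fp}=P\cdot\fp,\]
as required.

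The main obstacle has already been overcome upstream: the entire argument hinges on $G_{\fp}\subset P$, which is exactly the nontrivial content of Proposition~\ref{4.8} (itself proved via the Bruhat-type decomposition $G/P=W'_PU_P^-\,P/P$ together with Lemma~\ref{3.5}). With that in hand, both parts of the corollary follow by purely formal homogeneous-space and fiber-of-morphism arguments; no further input from the specific nature of $\fp$ is needed. The only subtlety worth emphasizing is that $P\cdot\fp$ being a fiber (rather than merely a locally closed orbit) of $\pi$ is precisely what forces it to be closed in $G\cdot\fp$.
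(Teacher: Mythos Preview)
Your proof is correct and takes essentially the same approach as the paper's: both hinge on Proposition~\ref{4.8} ($G_{\fp}=P_{\fp}$), and both reduce the preimage statement to showing $P\cdot\fp$ is closed in $G\cdot\fp$ via the fibration $G\cdot\fp\to G/P$ (the paper obtains this fibration as $\phi_o^{-1}$ followed by the projection $G\times_P(P\cdot\fp)\to G/P$, which is exactly your $\pi$). The only cosmetic difference is in part~1: you identify both sides abstractly with $G/P_{\fp}$, whereas the paper checks bijectivity of $\phi_o$ directly and then invokes the fact that a bijective morphism between smooth varieties is an isomorphism.
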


\begin{proof} Of course, $\phi_o$ is surjective. We next claim that $\phi_o$ is injective. Take
$\phi_o [g,  \fp] = \phi_o [g_1, \fp]$, i.e., $g\cdot
\fp = g_1\cdot \fp,$ which is equivalent to $ \bigl(
g_1^{-1}g\bigr)\cdot \fp = \fp$, i.e., $g_1^{-1}g
\in G_\fp = P_\fp,$ by Proposition ~\ref{4.8}. Thus, $ g_1^{-1}g =
\tilde{r},$   for some  $\tilde{r}\in P_\fp\subset P.$  Hence,
$[g,  \fp] =  [g_1, \fp]$, proving  that $\phi_o$ is bijective. Since $G\times_P (P\cdot \fp)$
and $G\cdot \fp$ are both smooth, $\phi_o$ is an isomorphism
(cf. [Ku2, Theorem A.11]).

To prove that $\phi^{-1}(G\cdot \fp)= {G}\times_{P} (P\cdot \fp ),$
 take $[g,y]\in
G\times_P\,(\overline{P\cdot \fp})$ such that $\phi[g,y] \in
G\cdot \fp,$  i.e.,  $g\cdot y = h\cdot \fp,$ for some  $ h\in G$.
This gives $y \in G\cdot
\fp\cap \overline{P\cdot \fp} .$
 But, $P\cdot \fp$ is closed in $G\cdot \fp$ by the first part of the
 corollary and hence  $y\in P\cdot \fp$, establishing the claim.
\end{proof}

Let ${S_1}$ be the subspace of $S$ spanned by  $e_{i,j}$, $m-n+1\leq
i,j\leq m$. Consider the maximal parablic subgroup $R$ of $L_P^2=\Aut S$,
consisting of those $g\in \Aut S$ which stabilize the line
  $\Bbb
C e_{1,1}.$  Then, $L_R:=\Aut (\bc e_{1,1})\times \Aut S_1$ is a Levi
subgroup of $R$. Let $U_R$ be the unipotent radical of $R$ and $U_R^-$ the
opposite unipotent radical.
\begin{proposition} \label{4.5} The isotropy of $\fp$ under the
group $L_P^2$
 is the same as the
isotropy of the Levi subgroup $L_R$.
\end{proposition}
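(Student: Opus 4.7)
The plan is to mirror the template of Proposition~\ref{4.8}: show that any $g\in L_P^2$ stabilizing $\fp$ preserves both $S_1\subset S$ and the line $\bc e_{1,1}\subset S$; since $L_R = \GL(\bc e_{1,1})\times \GL(S_1)$ is precisely the simultaneous stabilizer of these two subspaces, this forces $g\in L_R$. The reverse inclusion $(L_R)_\fp\subseteq (L_P^2)_\fp$ is immediate, giving the desired equality of isotropies.

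Step 1: $g$ preserves $S_1$. The restriction of $\fp$ to $S$ is the polynomial $x_{1,1}^{m-n}\perm(X^o)\in \bc[S]$ (writing $X = x_{1,1}e_{1,1} + X^o$ with $X^o\in S_1$). The permanent, being classically irreducible as a polynomial of degree $n\geq 3$ on $\End\fv_1\cong S_1$, remains irreducible in $\bc[S] = \bc[x_{1,1}]\otimes\bc[S_1]$, since any factorization in $\bc[S]$ forces both factors to have $x_{1,1}$-degree $0$. Consequently $x_{1,1}^{m-n}\perm(X^o)$ is the prime factorization of $\fp|_S$, and $x_{1,1}$ is the unique linear prime factor up to scalar. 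Because $g$ acts on $\bc[S]$ as a graded algebra automorphism fixing $\fp$, it permutes primes preserving degree, and so sends $x_{1,1}$ to a scalar multiple of itself; hence $g$ preserves the hyperplane $S_1 = \{x_{1,1}=0\}$.

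Step 2: $g$ preserves $\bc e_{1,1}$. Since $g(S_1)=S_1$, the induced action of $g^{-1}$ on $S/S_1$ is multiplication by a nonzero scalar $\alpha$, so I write $g^{-1}(e_{1,1}) = \alpha e_{1,1} + W$ with $W\in S_1$, and $g^{-1}|_{S_1}\in\GL(S_1)$. For $X = z e_{1,1} + Y\in S$ with $z\in\bc$, $Y\in S_1$,
\[
g^{-1}X = z\alpha\, e_{1,1} + \bigl(zW + (g|_{S_1})^{-1}Y\bigr),
\]
so $\fp(g^{-1}X) = \fp(X)$ reads
\[
(z\alpha)^{m-n}\perm\bigl(zW + (g|_{S_1})^{-1}Y\bigr) = z^{m-n}\perm(Y).
\]
Dividing by $z^{m-n}$ and letting $z\to 0$ gives $\alpha^{m-n}\perm((g|_{S_1})^{-1}Y) = \perm(Y)$; reinserting this identity yields $\perm(zW + Y') = \perm(Y')$ for all $z\in\bc$ and $Y'\in S_1$. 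Taking $z=1$ and invoking Lemma~\ref{3.5} forces $W=0$, so $g^{-1}(e_{1,1})\in\bc e_{1,1}$ and $g$ preserves $\bc e_{1,1}$, which together with Step 1 completes the proof.

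The main ingredient beyond routine computation is the classical irreducibility of the permanent (valid already for $n\geq 2$), which is what distinguishes $x_{1,1}$ as the unique linear prime factor of $\fp|_S$; once this is in hand, Step 2 is a direct reduction to Lemma~\ref{3.5}, exactly as in Proposition~\ref{4.8}.
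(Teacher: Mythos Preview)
Your proof is correct, and it takes a cleaner route than the paper's. The paper argues via a Bruhat-type decomposition of $L_P^2$ relative to the parabolic $R$: it writes an arbitrary element of $(L_P^2)_\fp$ as $wu^-ug$ with $g\in L_R$, $u\in U_R$, $u^-\in U_R^-$, and $w$ either trivial or a transposition $((1,1),(i,j))$, and then eliminates $w$, $u^-$, $u$ one at a time, each step combining the fact that no linear form divides $\perm$ with Lemma~\ref{3.5}. Your argument bypasses this case analysis entirely by invoking unique factorization in $\bc[S]$: once $\perm$ is known to be irreducible (or even just free of linear factors), $x_{1,1}$ is singled out as the unique linear prime factor of $\fp|_S$, and preservation of $S_1=\{x_{1,1}=0\}$ is immediate. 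Your Step~2 then reduces directly to Lemma~\ref{3.5}, whereas the paper needs two separate applications of that lemma (to kill $u^-$ and $u$) after first disposing of $w$. The paper's approach has the virtue of paralleling the proof of Proposition~\ref{4.8} more closely, but your unique-factorization argument is more conceptual and avoids the somewhat ad hoc decomposition $L_P^2 = W'_R U_R^- U_R L_R$.
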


\begin{proof}
In the proof, we let $i,j$ run over $m-n+1\leq i,j\leq m$. Any element
$u\in U_R$ is given by:
$ue_{1,1}=e_{1,1}$, $u\, e_{i,j}=e_{i,j} + a_{i,j}e_{1,1}$, for some
$a_{i,j}\in\bc$. Similarly, $U_R^-$ consists of $u^-$ such that
$u^-e_{i,j}=e_{i,j}$ and $u^-e_{1,1} = e_{1,1} + \sum
c_{i,j}e_{i,j}$. Any element of $L_P^2$ can be written as $wu^-ug$
 (for some
$g\in L_R, u\in U_R, u^-\in U_R^-$ and $w$ either the identity element
or a $2$-cycle $((1,1),(i,j))$). Take any $X= x_{1,1}e_{1,1} +
\sum x_{i,j}e_{i,j}\in S$. By $X_{S_1}$ we mean $\sum x_{i,j}e_{i,j}$ and by
$(X)_{1,1}$ we mean $x_{1,1}$.
 \begin{align*} ((wu^-ug)^{-1} \cdot\fp)(X)
&= \fp(wu^-ug\, X)\\
&= \Bigl( (wu^-ug\, X)_{1,1}\Bigr)^{m-n} \perm \Bigl( (wu^-ug\, X)_{S_1}
\Bigr).
 \end{align*} So, if $(wu^-ug)^{-1} \in (L_P^2)_\fp$, then
 \[ \bigl( (wu^-ug)^{-1}\cdot \fp\bigr) (X) =
\fp(X)= x_{1,1}^{m-n} \perm (X_{S_1}), \quad\text{for all $X\in S$}.
\]
Since no linear form divides $\perm$, we get
\beqn \label{e4.6} \al x_{1,1} = (wu^-ugX)_{1,1},
\qquad\text{for some constant }\al\neq 0\in\bc, \,\text{and}
\eeqn
\begin{align}\label{e4.7} \beta\perm (X_{S_1}) &= \perm \bigl( (wu^-ug\, X)_{S_1}\bigr),
\quad\text{for some constant}\,\beta\neq 0\in\bc \notag\\
&= \perm \bigl(
(wu^-ug\, X_{S_1} + x_{1,1} wu^-ug\, e_{1,1})_{S_1}\bigr).
\end{align} Since the left hand side of \eqref{e4.7} is independent of $x_{1,1}$, we
get \[ \perm\bigl( (wu^-ug\, X)_{S_1}\bigr) = \perm\bigl( (wu^-ug\, X)_{S_1}
+ (\alpha_{1,1}wu^-ug\, e_{1,1})_{S_1}\bigr), \] for all $X\in S$ and $\alpha_{1,1}\in \bc$.

Since $wu^-ug\in \Aut S$, as $X$ varies over $S$,
 $(wu^-ug\, X)_{S_1}$ varies over all of ${S_1}$. Thus, by
Lemma ~\ref{3.5}, \begin{equation}\label{e4.8} (wu^-ug\, e_{1,1})_{S_1} = 0.
\end{equation}
Now,
\begin{align} \label{e4.9} u^-u ge_{1,1} &= u^- (\lam\, e_{1,1}), \qquad\text{for
some }\lam\neq 0\notag\\ &= \lam \bigl( e_{1,1} + \sum c_{i,j}
e_{i,j}\bigr). \end{align}
Thus,
If $w$ is the $2$-cycle $((1,1), (i_o,j_o))$ for some  $m-n+1\leq i_o,j_o
\leq m$,  then\[ wu^- ug\, e_{1,1} = \lam
\Bigl( e_{i_o,j_o} + \sum_{(i,j)\neq (i_o,j_o)} c_{i,j} e_{i,j} +
c_{i_o,j_o}e_{1,1}\Bigr) . \] In particular, $(wu^-ug\, e_{1,1})_{S_1}
\neq 0$, a contradiction to the identity \eqref{e4.8}. Thus, $w=1$. By the equations \eqref{e4.8}--
\eqref{e4.9}, we
get \[ c_{i,j} =0 \qquad\text{for all }i,j . \] Thus, $u^- =1$.

By equation \eqref{e4.6}, we get \[ \al\,
x_{1,1}=(wu^-ug\, X)_{1,1} = (ug\,
X)_{1,1} = \bigl( ug(X_{S_1}+x_{1,1}\, e_{1,1})\bigr)_{1,1}.\]
  In particular, $(ug\, X_{S_1})_{1,1} = 0$. Since
$g$ maps $S_1$ onto ${S_1}$, we get
\[ (u\, e_{i,j})_{1,1} = 0,
\qquad \text{for all}\,\, m-n+1\leq i,j\leq m.\]
 Hence,  $a_{i,j} = 0 .$ Thus,
$u=1$ as well. This proves the proposition. \end{proof}

\begin{corollary} Let $3\leq n <m$. Then, each irreducible component of $\overline{L_P^2 \cdot\fp}
\backslash L_P^2\cdot \fp$ is of codimension $1$ in
$\overline{L_P^2 \cdot\fp}$. \end{corollary}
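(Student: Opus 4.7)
The strategy is to show $L_P^2 \cdot \fp$ is affine, then invoke the standard fact (Lemma~\ref{4.6a}) that the complement of an affine open subvariety in an irreducible variety is pure of codimension one; this is the same approach used for $\cx$ in the remark following Corollary~\ref{2.5}.

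To prove affineness, by Matsushima's theorem it suffices to show that the isotropy $(L_P^2)_\fp$ is reductive. By Proposition~\ref{4.5}, this isotropy equals $(L_R)_\fp$, where $L_R = \Aut(\bc e_{1,1}) \times \Aut S_1 \cong \bc^* \times \GL(\End \fv_1)$. A pair $(\lambda, h) \in L_R$ stabilizes $\fp$ precisely when $h \cdot \perm = \lambda^{m-n}\, \perm$: the rescaling by $\lambda^{-(m-n)}$ coming from the factor $x_{1,1}^{m-n}$ must be compensated by $h$ acting on $\perm$ as the scalar $\lambda^{m-n}$. Let $H \subseteq \GL(\End \fv_1)$ denote the subgroup of elements acting on $\perm$ by some scalar, and let $\chi \colon H \to \bc^*$ be the character $\chi(h) = (h \cdot \perm)/\perm$. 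Its kernel is the honest isotropy $(\GL(\End \fv_1))_{\perm}$, which is reductive by the explicit description in Proposition~\ref{3.1}; the image of $\chi$ is all of $\bc^*$ (already realized on scalar matrices $cI$, which act on $\perm$ by $c^n$). Hence $H$ is reductive, as an extension of the torus $\bc^*$ by a reductive group. Finally, $(L_R)_\fp$ is the kernel of the character $(\lambda, h) \mapsto \lambda^{m-n}\chi(h)^{-1}$ of the reductive group $\bc^* \times H$, so it too is reductive.

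By Matsushima's theorem, $L_P^2 \cdot \fp \simeq L_P^2/(L_P^2)_\fp$ is therefore an affine variety; it is also open in its closure (any algebraic group orbit is locally closed in its closure). Applying Lemma~\ref{4.6a} yields the desired pure codimension one conclusion for each irreducible component of $\overline{L_P^2 \cdot \fp} \setminus L_P^2 \cdot \fp$. The main obstacle is the reductivity check in the first step, which cleanly reduces to Propositions~\ref{3.1} and~\ref{4.5}; once it is in hand, the affine-complement lemma finishes the proof immediately.
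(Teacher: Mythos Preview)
Your proof is correct and follows essentially the same approach as the paper: reduce to showing the isotropy $(L_P^2)_\fp = (L_R)_\fp$ is reductive via Proposition~\ref{4.5} and the explicit description in Proposition~\ref{3.1}, apply Matsushima's theorem to get affineness of the orbit, and then invoke Lemma~\ref{4.6a}. The paper argues reductivity by projecting $(L_R)_\fp$ to the first factor $\bc^*$ (with kernel $(\Aut S_1)_{\perm}$), while you instead realize $(L_R)_\fp$ as the kernel of a character on $\bc^*\times H$; these are minor repackagings of the same idea. One tiny slip: under the paper's action $(g\cdot\perm)(X)=\perm(g^{-1}X)$, the scalar $cI$ acts on $\perm$ by $c^{-n}$ rather than $c^n$, but this does not affect your conclusion that $\chi$ is surjective.
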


\begin{proof} By the last proposition, the isotropy of $\fp$ inside $L^2_P$ is
the same as that of the isotropy of $\fp$ inside $L_R$.
 For any $\lam \in \bc^*$,  take $\tau_\lam\in \Aut (\bc e_{1,1})$ defined by
 $e_{1,1} \mapsto
\lam\, e_{1,1} $.  Then, for any $g\in \Aut S_1$ and $X=x_{1,1}e_{1,1}+
X_1$ with $X_1\in S_1$, we have
\begin{align}\label{e4.10}
\Bigl((\tau_\lam, g)\cdot \fp\Bigr)(X) &= \fp\Bigl( \lam^{-1}x_{1,1}e_{1,1}
+ g^{-1}X_1\Bigr)\notag\\ &= (\lam^{-1}x_{1,1})^{m-n}\, \perm (g^{-1}X_{1}).
\end{align}
Thus, $(\tau_\lam, g)\in (L_R)_\fp$ if and only if
$(\lam^{\frac{1}{n}})^{m-n}g\in (\Aut S_1)_{\perm}$, for some
$n$-th root $\lam^{\frac{1}{n}}$ of $\lam$. Considering the projection
to the first factor $(L_R)_\fp\to \Aut (\bc e_{1,1})=\bc^*$ and using
Corollary ~\ref{3.2}, it is easy to see that $(L_R)_\fp=(L_P^2)_\fp$ is
reductive. Thus, $L^2_P\cdot \fp$ is an affine variety.
 Of course,
$\overline{L^2_P\cdot \fp}$ is an affine variety. Moreover,
$0\in (\overline{L^2_P\cdot \fp})\backslash L^2_P\cdot \fp$ by \eqref{e4.10}.
 Thus,
 $(\overline{L^2_P\cdot
\fp})\backslash L_P^2\cdot \fp$ is nonempty and each of its irreducible
components is of codimension $1$ in $\overline{L^2_P\cdot
\fp}$ by the following lemma. \end{proof}

We recall the following well known result from algebraic geometry. For the lack of
reference, we include a proof.
\begin{lemma} \label{4.6a} Let $X$ be an irreducible affine variety and let $X^o\subset X$
be an open normal affine subvariety. Then, each irreducible component of
$X\setminus X^o$ is of codimension $1$ in $X$.
\end{lemma}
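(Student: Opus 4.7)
The plan is to argue by contradiction: supposing some irreducible component $Z$ of $X\setminus X^o$ has codimension $\geq 2$ in $X$, I will manufacture two morphisms from a suitable open subset into $X$ that agree on a dense open subset but have distinct images, contradicting separatedness of $X$.

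First I would reduce to the case where $X$ itself is normal. The normalization $\pi\colon \tilde X\to X$ is finite and surjective between irreducible affine varieties of the same dimension. Because $X^o$ is already normal, the restriction $\pi^{-1}(X^o)\to X^o$ is an isomorphism, so $\tilde X^o:=\pi^{-1}(X^o)$ is an open affine subvariety of $\tilde X$. Since a finite surjective morphism preserves dimensions of irreducible components, the codimensions of the components of $\tilde X\setminus \tilde X^o$ agree with those of the corresponding components of $X\setminus X^o$. It therefore suffices to prove the conclusion for $\tilde X$, so we may henceforth assume that $X$ is normal.

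Next, let $Y$ be the union of all codimension-one irreducible components of $X\setminus X^o$ (empty if none exist) and put $V:=X\setminus Y$. Then $V$ is normal, open in $X$, contains $X^o$, and also contains the hypothesized component $Z$. The closed subset $V\setminus X^o$ is nonempty and has pure codimension at least two in $V$. Algebraic Hartogs for normal varieties then gives that the restriction map $\bc[V]\to \bc[X^o]$ is an isomorphism.

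Since $X^o$ is affine, $X^o=\Spec \bc[X^o]$, so the canonical morphism $V\to \Spec \bc[V]$ becomes a morphism $\psi\colon V\to X^o$ whose restriction to $X^o$ is the identity. Composing $\psi$ with the open inclusion $\iota\colon X^o\hookrightarrow X$ yields $\iota\circ \psi\colon V\to X$, which agrees with the inclusion $j\colon V\hookrightarrow X$ on the dense open $X^o\subset V$; since $X$ is separated, $\iota\circ \psi=j$. But the image of $\iota\circ \psi$ is contained in $X^o$, whereas the image of $j$ is $V\supsetneq X^o$, giving the required contradiction. The main obstacle is this last step: once Hartogs yields $\bc[V]\simeq \bc[X^o]$, the affineness of $X^o$ must be used to convert this algebraic identity into a geometric morphism $\psi\colon V\to X^o$, after which separatedness is invoked to force $\psi$ to be compatible with the ambient inclusion; it is precisely here that the hypothesis that $X^o$ is affine (and not merely normal and open) enters.
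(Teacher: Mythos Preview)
Your proof is correct and follows essentially the same route as the paper's: pass to the normalization, remove the codimension-one components of the complement, apply Hartogs to identify function rings, use affineness of $X^o$ to convert this into a morphism, and then invoke density/separatedness to force the inclusion to be surjective. The only cosmetic differences are that you frame the argument as a contradiction and carry out the reduction to the normal case before running the Hartogs argument, whereas the paper works on the normalization $\tilde X$ throughout and pushes the conclusion down to $X$ via finiteness of $\pi$ at the end.
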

\begin{proof} Let $\pi:\tilde{X}\to X$ be the normalization of $X$. Then,
$X^o$ being normal and open subvariety of $X$, $\pi:\pi^{-1}(X^o)\to X^o$
is an isomorphism. We identify $\pi^{-1}(X^o)$ with  $X^o$ under $\pi$. Decompose
$\tilde{X}\setminus X^o=C_1\cup C_2$, where $C_1$ (resp. $C_2$) is the union of
codimension $1$ (resp. $\geq 2$) irreducible components of $\tilde{X}
\setminus X^o$. Then, by Hartog's theorem, the inclusion $i: X^o\subset
\tilde{X}\setminus C_1$ induces an isomorphism $i^*:\bc[\tilde{X}
\setminus C_1]\simeq \bc[X^o]$ of the rings of regular functions. Let $f$ be the
inverse of $i^*$. Then, $X^o$ being affine, there exists a morphism
$j: \tilde{X}\setminus C_1 \to X^o$ such that the induced map $j^*=f$
and $j_{|X^o}=\text{Id}$ (cf. [H, Proposition 3.5, Chap. I]). Since the composite
morphism $i\circ j: \tilde{X}\setminus C_1 \to \tilde{X}\setminus C_1$ restricts
to the identity map on $X^o$ and $X^o$ is dense in $\tilde{X}\setminus C_1$,
$i\circ j=\text{Id}$. In particular, $i$ is surjective, i.e., $X^o=
\tilde{X}\setminus C_1$. Thus,
\[X\setminus X^o=\pi(\tilde{X}\setminus X^o)=\pi(C_1).\]
But, since $\pi$ is a finite morphism, $\pi(C_1)$ is closed in $X$ and, moreover, all the
 irreducible components of $\pi(C_1)$ are of codimension $1$ in $X$.
 \end{proof}

As another corollary of Proposition ~\ref{4.5} (together with Corollary ~\ref{3.2},
Lemma ~\ref{4.1}, Proposition ~\ref{4.8} and identity ~\eqref{e4.10}),
we get the following.
\begin{corollary} For $3\leq n <m$, $\dim \cy=m^2(n^2+1)-2n+1.$
\end{corollary}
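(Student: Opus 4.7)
The approach is to compute $\dim G_\fp$ by cascading through the isotropy results established earlier in this section, and then use $\dim \cy = m^4 - \dim G_\fp$.

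First I would reduce from $G_\fp$ all the way down to $(L_P^2)_\fp$. Proposition~\ref{4.8} immediately gives $G_\fp = P_\fp$. Writing an arbitrary $p \in P$ as $u\ell_1\ell_2$ with $u \in U_P$, $\ell_i \in L_P^i$, Lemma~\ref{4.1} tells us that both $u$ and $\ell_1$ fix $\fp$; combined with the normality of $U_P$ in $P$ and the fact that $L_P^1$ commutes with $L_P^2$, a short calculation shows $p\cdot\fp=\fp$ if and only if $\ell_2 \in (L_P^2)_\fp$. This yields the clean dimension decomposition
\[
\dim G_\fp \;=\; \dim U_P + \dim L_P^1 + \dim (L_P^2)_\fp.
\]

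Next I would evaluate $\dim (L_P^2)_\fp$. By Proposition~\ref{4.5} it equals $\dim (L_R)_\fp$, and identity~\eqref{e4.10} shows that the projection $(L_R)_\fp \to \Aut(\bc e_{1,1}) = \bc^*$, $(\tau_\lam, g) \mapsto \lam$, is surjective with each fiber a coset of $(\Aut S_1)_{\perm} = (\GL(\End \fv_1))_{\perm}$. By Corollary~\ref{3.2}, the identity component of this last group is $\bar{\gamma}(\hat{D}\times\hat{D})$, which has dimension $2(n-1)$ since the diagonal part of $\hat{D}\subset\GL(\fv_1)$ has dimension $n-1$. Hence $\dim (L_P^2)_\fp = 1 + 2(n-1) = 2n-1$.

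Finally, setting $a := m^2 - n^2 - 1 = \dim S^\bot$ and $b := n^2+1 = \dim S$, we have $\dim L_P^1 = a^2$ and $\dim U_P = ab$, so
\[
\dim G_\fp \;=\; ab + a^2 + (2n-1) \;=\; a(a+b) + 2n - 1 \;=\; (m^2 - n^2 - 1)m^2 + 2n - 1,
\]
and subtracting from $m^4$ collapses to the claimed $m^2(n^2+1) - 2n + 1$. There is no real obstacle here: every nontrivial ingredient has already been established, and the only point requiring a moment of care is verifying the decomposition of $P_\fp$ via the normality of $U_P$ in $P$ and the commutation of the two Levi factors.
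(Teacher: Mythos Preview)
Your proposal is correct and follows precisely the route the paper indicates: it cites Proposition~\ref{4.8}, Lemma~\ref{4.1}, Proposition~\ref{4.5}, Corollary~\ref{3.2}, and identity~\eqref{e4.10} as the ingredients, and you have assembled them exactly as intended. The arithmetic and the decomposition $P_\fp = U_P \cdot L_P^1 \cdot (L_P^2)_\fp$ are both fine.
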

\section{A partial desingularization of $\overline{L_P^2\cdot \fp}$}

By virtue of the results in the last section (specifically Theorem
~\ref{4.2}), study of the $G$-module $\bc[\cy]$ reduces to that of the
$L_P^2$-module $\bc[\overline{L_P^2\cdot \fp}]$.

\begin{definition} \label{5.1} {\em Define the morphism \[ \beta : L^2_P \times_R
(\overline{R\cdot
\fp}) \to \overline{L^2_P\cdot \fp}, \,\,[g, f] \mapsto g\cdot
f , \]
for $g\in L_P^2, f\in \overline{R\cdot \fp}$, where the closure
$\overline{R\cdot \fp}$ is taken inside $S^m(E^*)$.

Since $L_P^2/R$ is a projective variety, $\beta$ is a proper and
surjective morphism.}
 \end{definition}

\begin{lemma} \label{5.2} The inverse image $\beta^{-1} (L^2_P\cdot \fp )
=L^2_P \times_R (R\cdot \fp).$ Moreover, the restriction
$\beta_o$ of $\beta$ to $L^2_P \times_R (R\cdot \fp)$ is a
biregular isomorphism onto $L_P^2\cdot \fp$.  \end{lemma}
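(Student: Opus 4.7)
The plan is to mimic the argument used for Corollary~\ref{4.4}, with the roles of $G$ and $P$ replaced by $L_P^2$ and $R$, invoking Proposition~\ref{4.5} in place of Proposition~\ref{4.8}. The key observation that unlocks everything is that Proposition~\ref{4.5} gives $(L_P^2)_\fp=(L_R)_\fp$, and since $L_R\subset R\subset L_P^2$, the inclusions $(L_R)_\fp\subset R_\fp\subset (L_P^2)_\fp$ force $(L_P^2)_\fp=R_\fp$. In particular the isotropy of $\fp$ in $L_P^2$ is already contained in $R$.

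First I would treat the map $\beta_o:L_P^2\times_R(R\cdot\fp)\to L_P^2\cdot\fp$. Surjectivity is immediate from the definition. For injectivity, suppose $\beta_o[g_1,r_1\cdot\fp]=\beta_o[g_2,r_2\cdot\fp]$, i.e.\ $(g_2r_2)^{-1}(g_1r_1)\cdot\fp=\fp$. Then $(g_2r_2)^{-1}(g_1r_1)\in(L_P^2)_\fp=R_\fp\subset R$, so $g_2^{-1}g_1\in R$, which means $[g_1,r_1\cdot\fp]=[g_2,r_2\cdot\fp]$ in the fibre product. Both $L_P^2\times_R(R\cdot\fp)$ and $L_P^2\cdot\fp$ are smooth (the former is a locally trivial fibration over the smooth variety $L_P^2/R$ with smooth fibre $R\cdot\fp\simeq R/R_\fp$, and the latter is a homogeneous space of $L_P^2$), so by the same result [Ku2, Theorem A.11] used in the proof of Corollary~\ref{4.4}, the bijective morphism $\beta_o$ is a biregular isomorphism.

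For the first assertion, take $[g,y]\in L_P^2\times_R(\overline{R\cdot\fp})$ with $\beta[g,y]=g\cdot y\in L_P^2\cdot\fp$. Then $y\in L_P^2\cdot\fp\cap\overline{R\cdot\fp}$. But from the isomorphism just established, $R\cdot\fp$ is precisely the fibre of $\beta_o$ over the distinguished point $\fp\in L_P^2\cdot\fp$ (under the identification coming from $eR\in L_P^2/R$), hence $R\cdot\fp$ is closed in $L_P^2\cdot\fp$. Therefore $y\in R\cdot\fp$, so $[g,y]\in L_P^2\times_R(R\cdot\fp)$, which gives $\beta^{-1}(L_P^2\cdot\fp)=L_P^2\times_R(R\cdot\fp)$.

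The only nontrivial input is Proposition~\ref{4.5}; once the isotropy identification is in hand, the injectivity and inverse-image statements are formal. I do not anticipate a serious obstacle, although one should check that invoking [Ku2, Theorem A.11] here really only needs smoothness of source and target of $\beta_o$ (which, as noted, follows respectively from being a fibre bundle with smooth fibre and being a group orbit).
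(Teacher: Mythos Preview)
Your proof is correct and follows essentially the same approach as the paper's: both use Proposition~\ref{4.5} to identify $(L_P^2)_\fp$ with $R_\fp$, deduce injectivity of $\beta_o$, and invoke bijectivity between smooth varieties for the biregular isomorphism; the inverse-image statement then reduces to showing $R\cdot\fp$ is closed in $L_P^2\cdot\fp$. The only cosmetic difference is that the paper proves the inverse-image statement first (phrasing the closedness as a consequence of $L_P^2/R$ being projective, though this too implicitly uses $(L_P^2)_\fp\subset R$), whereas you prove the isomorphism first and read off closedness from the resulting fibration over $L_P^2/R$---both arguments are equivalent.
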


\begin{proof} Take $[g,f]\in \beta^{-1}(L_P^2\cdot \fp).$ Then,
$f\in (L_P^2\cdot \fp)\cap \overline{R\cdot \fp}.$ But, $L_P^2/R$
being projective, $R\cdot \fp$ is closed in $L_P^2\cdot \fp.$ Thus,
$(L_P^2\cdot \fp)\cap \overline{R\cdot \fp}=R\cdot \fp.$ This proves
the first part of the lemma.

 By Proposition ~\ref{4.5}, the isotropy of
$\fp$ inside $L^2_P$ is the same as that in $R$. From this the
injectivity of $\beta_o$ follows easily. Since $\beta_o$ is a bijective
morphism
between smooth varieties, it is a biregular isomorphism.
 \end{proof}

As in Section 3, consider $\perm\in S^n(S_1^*)$, where $S_1$ is
viewed as $\End \fv_1$ and $\fv_1$ is equipped with the basis
$\{e_{m-n+1}, \dots, e_m \}.$ Moreover, the decomposition
$E=S^{\bot}\oplus \bc e_{1,1}\oplus S_1$ gives rise to the
projection $E\to S_1$ and, in turn, an embedding
$S^n(S_1^*)\hookrightarrow S^n(E^*)$. Thus, we can think of $\perm
\in S^n(E^*).$ Let
\[\cz^o:=(\Aut S_1)\cdot \perm \subset S^n(E^*),\]
where $\Aut S_1$ is to be thought of as the subgroup of $G$ by
extending any automorphism of $S_1$ to that of $E$ by defining it to
be the identity map on $S^{\bot}\oplus \bc e_{1,1}$. Let $\cz$ be the
closure of $\cz^o$ in $S^n(E^*)$.

Consider the standard (dual) action of $L_P^2=\Aut S$ on $S^*$.
In particular, we get an action of $R$ on $S^*$.  Also, it is easy
to see that $U_R$ and $\Aut (\bc e_{11})$ act trivially on $\cz^o$
(and hence on $\cz$) under the standard action of $G$ on $S^n(E^*)$.
 In particular, $\cz$ is a $R$-stable closed subset of $S^n(E^*)$
 (under the standard action of $R$).

Consider the morphism
  \[
\bar{\al} : S^*\times \cz \to Q, \,\,
(\lam, f) \mapsto \bar{\lam}^{m-n}f, \]
for $\lam\in S^*$ and $f\in \cz$,
where $\bar{\lam}\in E^*$ is the image of $\lam$ under the inclusion
$S^*\hookrightarrow E^*$ induced from the projection $E\to S$.
  Then, $\bar{\alpha}$ is $R$-equivariant under the diagonal action of
$R$ on $S^*\times \cz$.
  Define an action of $\bc^*$ on $S^*\times \cz$ via
  \beqn \label{e5.1}
z(\lam ,f) = (z\lam , z^{n-m}f).
  \eeqn
This action commutes with the action of $R$.  Then, $\bar{\al}$ clearly
factors through the $\bc^*$-orbits, and hence we get an $R$-equivariant
morphism
  \[  \al: (S^*\times \cz)//\bc^* \to Q .  \]

  \begin{proposition} \label{5.3} The above morphism $\al$ is a finite morphism
  with image precisely equal to $\overline{R\cdot\fp}$.

Moreover, $\al^{-1}(R\cdot\fp )=\bigl( (S^*\backslash S^*_1)\times
 \cz^o\bigr) //\bc^*$ and the map $\al_o$ obtained from the
 restriction of $\al$ to $\bigl( (S^*\backslash S^*_1)\times \cz^o\bigr)
 //\bc^*$ is a biregular isomorphism
  \[
\al_o: \bigl( (S^*\backslash S^*_1)\times \cz^o\bigr)
//\bc^* \simto R\cdot\fp ,
  \]
where $S^*_1$ is thought of as a subspace of $S^*$ via the projection
$S =\bc e_{1,1}\oplus S_1 \to S_1$.

In particular, $\al$ is a proper and birational morphism onto
$\overline{R\cdot\fp}$.
  \end{proposition}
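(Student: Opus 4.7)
The plan is to first prove that $\alpha_o$ is a biregular isomorphism onto $R\cdot\fp$ by an orbit-stabilizer argument, and then derive finiteness of the global map $\alpha$ from Lemma~\ref{2.2}; properness, surjectivity, the identity for $\alpha^{-1}(R\cdot\fp)$, and birationality then follow formally.

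For the biregularity of $\alpha_o$, I would first compute $\alpha^{-1}(\fp)$. Since $\Aut S_1$ preserves $S^n(S_1^*)\subset S^n(E^*)$ and $\perm\in S^n(S_1^*)$, we have $\cz\subset S^n(S_1^*)$, so any $f\in\cz$ is a polynomial purely in the $S_1$-coordinates and in particular not divisible by $\overline{e_{1,1}^*}$. If $(\lambda,f)$ lies over $\fp$, then $\bar\lambda^{m-n}f=(\overline{e_{1,1}^*})^{m-n}\perm$ in the UFD $S^{\bullet}(E^*)$; comparing multiplicities of $\overline{e_{1,1}^*}$ (noting that $\bar\lambda^{m-n}$ contributes either $0$ or $m-n$ copies, and $f$ contributes none) forces $\bar\lambda=c\,\overline{e_{1,1}^*}$ for some $c\in\bc^*$, and then $f=c^{n-m}\perm$. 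Modding by $\bc^*$, the fiber is the single point $[e_{1,1}^*,\perm] \in X_o:=((S^*\setminus S_1^*)\times\cz^o)//\bc^*$. I would then check directly that $R$ acts transitively on $X_o$ with base point $[e_{1,1}^*,\perm]$: the $\bc^*$-rescaling normalises the $e_{1,1}^*$-coefficient of $\lambda$ to any nonzero value, $U_R$ (which fixes $\cz^o$ pointwise) adjusts the $S_1^*$-component of $\lambda$ arbitrarily, and $(\tau,g)\in L_R$ scales the $e_{1,1}^*$-component and sends $\perm$ to $g\cdot\perm$. An analogous computation identifies the $R$-stabiliser of $[e_{1,1}^*,\perm]$ as $\{u(\tau,g):u=1,\ g\cdot\perm=\tau^{m-n}\perm\}$, which by formula~(4.10) and Proposition~\ref{4.5} is exactly $R_\fp=(L_R)_\fp$. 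The $R$-equivariant bijection $\alpha_o$ between the two homogeneous $R$-spaces $R/R_\fp$ is then a biregular isomorphism.

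For the finiteness of $\alpha$, I would apply Lemma~\ref{2.2} to the graded algebra map $\alpha^*:\bc[\overline{R\cdot\fp}]\hookrightarrow\bc[X]$, where $X:=(S^*\times\cz)//\bc^*$. Both are non-negatively graded finitely generated integral domains with degree-zero part $\bc$: $\overline{R\cdot\fp}$ is a cone (containing the origin obtained by letting $\lambda\to\infty$ in~(4.10)), and $\bc[X]=\bigoplus_{b\geq 0}S^{(m-n)b}(S)\otimes\bc[\cz]^b$ is the $\bc^*$-weight-zero subring of $\bc[S^*\times\cz]$, graded by $b$ via an auxiliary $\bc^*$-action (e.g.\ scaling only the $\cz$-factor) that commutes with the quotient action and makes $\alpha$ equivariant. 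Injectivity of $\alpha^*$ follows from dominance of $\alpha$. The key hypothesis of Lemma~\ref{2.2} is $\alpha^{-1}(0)=\{o\}$, where $o$ is the unique cone-point of $X$: this holds because $\bar\alpha(\lambda,f)=0$ in $S^{\bullet}(E^*)$ forces $\bar\lambda=0$ or $f=0$, and this entire unstable locus collapses to $o$ in the GIT quotient (all positive-weight invariants vanish on it). Lemma~\ref{2.2} then yields that $\alpha$ is a finite morphism, hence proper, with closed image; since the image contains $R\cdot\fp$, it equals $\overline{R\cdot\fp}$.

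Finally, from the fiber computation at $\fp$ together with $R$-equivariance we obtain $\alpha^{-1}(R\cdot\fp)=R\cdot[e_{1,1}^*,\perm]=X_o$, and birationality follows from $\alpha_o$ being an isomorphism on the dense open subsets. The main technical obstacle is verifying the hypothesis $\alpha^{-1}(0)=\{o\}$ of Lemma~\ref{2.2}: this relies on the cone structure of the GIT quotient $X$ and the observation that the entire unstable locus $\{\bar\lambda=0\}\cup\{f=0\}$ is collapsed to the unique cone-point.
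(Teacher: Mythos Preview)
Your proposal is correct and follows essentially the same strategy as the paper: compute $\alpha^{-1}(\fp)$ via unique factorization in $S^\bullet(E^*)$, use $R$-equivariance to handle the whole orbit, and deduce finiteness from Lemma~\ref{2.2} by checking that the fiber over $0$ is the cone point. The only cosmetic differences are that the paper applies Lemma~\ref{2.2} to the ambient map $\psi:(E^*\times S^n(E^*))//\bc^*\to Q$ and then composes with the closed embedding $\iota$ (so the fiber-over-$0$ check is done in the larger, simpler space), whereas you apply it directly to $\alpha$; and the paper concludes biregularity of $\alpha_o$ from ``bijective morphism between smooth varieties'' rather than your explicit orbit--stabilizer matching.
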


  \begin{proof}  Consider the $\bc^*$-equivariant closed embedding
  \[  S^*\times \cz \hookrightarrow E^*\times S^n(E^*),  \]
where $\bc^*$ acts on the right side by the same formula as ~\eqref{e5.1}.  This gives rise to the closed embedding
    \[
\iota : S^*\times \cz//\bc^*\hookrightarrow E^*\times S^n(E^*)//\bc^* .
   \]
We next claim that the morphism
  \[  \psi : E^*\times S^n(E^*)//
  \bc^* \to Q = S^m(E^*),
   \]
induced from the map
$(\bar{\lam}, f) \mapsto \bar{\lam}^{m-n}f, $
for $\bar{\lam}\in E^*$ and $f\in S^n(E^*)$, is a finite morphism.
 Define a new $\bc^*$ action on $E^*\times S^n(E^*)$ by
  \[
t\odot (\bar{\lam},f) = (t\bar{\lam}, tf), \quad\text{ for }
t\in \bc^*.
  \]
This $\bc^*$-action commutes with the $\bc^*$-action given by
~\eqref{e5.1}.
Thus, we get a $\bc^*$-action (still denoted by $\odot$) on
$E^*\times S^n(E^*)//\bc^*$.  Also, define a $\bc^*$-action on
 $S^m(E^*)$ by
  \[
t\odot f = t^{m-n+1}f, \quad\text{ for $t\in\bc^*$ and $f\in S^m(E^*)$}.
  \]
Then, $\psi$ is $\bc^*$-equivariant.  Moreover, $\psi^{-1}(0) =
(0\times S^n(E^*) \cup E^*\times 0)//\bc^* = \{ 0\}$.  Thus, by
Lemma ~\ref{2.2} (applied to the map $\psi$ considered as a map:
$E^*\times S^n(E^*)//\bc^* \to \overline{\text{Im} \psi}$), $\psi$ is a finite morphism.

Since $\al =\psi\circ\iota$, we get that $\al$ is a finite morphism.

We next calculate $\al^{-1}(\fp )$.  Let $[\lam ,f]\in \al^{-1}(\fp )$,
where $[\lam ,f]$ denotes the image of $(\lam ,f)$ in
$S^*\times \cz//\bc^*$.  Then,
  \beqn \label{e5.2}
\bar{\lam}^{m-n}f = \fp = \bar{\lam}^{m-n}_o\perm ,
  \eeqn
where $\lam_o\in S^*$ is defined by $\lam_o (ze_{1,1}+X_1)=z$, for
any $z\in\bc$ and $X_1\in S_1$.

Since $\bar{\lam}$ does not divide $\perm$, from ~\eqref{e5.2} we get
  \[
\lam =a\lam_o \text{  and  } f=a^{n-m}\perm,\,\,\text{for some}\,
a\in\bc^*,
  \]
 which gives
  \[
[\lam ,f] = [\lam_o, \perm ].
  \]
Thus, $\al^{-1}(\fp )$ is a singleton and hence so is $\al^{-1}
(r\cdot\fp )$ for any $r\in R$ (by the $R$-equivariance of $\al$).
 In particular,
  \begin{align*}
\al^{-1}(R\cdot\fp ) &= R\cdot [\lam_o, \perm ]\\
&= \bigl(\Aut (\bc e_{1,1})\, U_R\Aut (S_1)\bigr) \cdot [\lam_o ,\perm ]\\
&= \bigl(\Aut (\bc e_{1,1})\, U_R\bigr) \cdot [\lam_o ,\cz^o ],\,\,
 \text{ since}\, \Aut (S_1)\cdot \lam_o=\lam_o\\
&= \bigl[(\Aut (\bc e_{1,1})\, U_R) \cdot \lam_o ,\cz^o \bigr],\,\,
 \text{ since}\, \Aut (\bc e_{1,1})\\
 &\quad \text{and}\, U_R \,\,\text{act trivially on} \, \cz^o\\
&= [S^*\backslash S^*_1, \cz^o]\\
&= \bigl( (S^*\backslash S^*_1)\times \cz^o\bigr)//\bc^*.
  \end{align*}
  Observe that all the $\bc^*$-orbits in $(S^*\backslash S^*_1)\times
  \cz^o$ are closed in $S^*\times \cz$ and hence
  $\bigl( (S^*\backslash S^*_1)\times \cz^o\bigr)//\bc^*=
  \bigl( (S^*\backslash S^*_1)\times \cz^o\bigr)/\bc^*$ can be thought of as
  an open subset of $\bigl( S^*\times \cz\bigr)//\bc^*.$
 This proves that ${\al_o}$ is a bijective morphism between smooth irreducible
 varieties and hence it is a biregular isomorphism (cf. [Ku2, Theorem A.11]).

Finally, since $\al$ is a finite morphism (in particular, a
proper morphism), Im $\al$ is closed in $Q$ and contains $R\cdot\fp$.
 Thus, Im $\al \supset \overline{R\cdot\fp}$.  But, since
 $\bigl( (S^*\backslash S^*_1)\times \cz^o\bigr)//\bc^*$ is dense
 in $S^*\times \cz//\bc^*$, we get Im $\al\subset \overline{R\cdot\fp}$
 and hence Im $\al = \overline{R\cdot\fp}$.

This completes the proof of the proposition.
  \end{proof}
\begin{remark} {\em Even though we do not need, the above map $\al$ is a
bijection onto its image.}
\end{remark}
Combining Lemma ~\ref{5.2} with Proposition ~\ref{5.3}, we get the
following:

\begin{corollary} We have \begin{align*} \bc &\bigl[ \overline{L^2_P\cdot
\fp}] \overset{\beta^*}\hookrightarrow \bc \bigl[ L^2_P \times_R
(\overline{R\cdot \fp})\bigr]
\simeq H^0 \bigl( L^2_P/R, \bc [ \overline{R\cdot \fp}]\bigr)\\
 &\overset{\alpha^*}\hookrightarrow H^0 \bigl( L^2_P/R, \bc [S^*\times
\cz]^{\bc^*}\bigr).
\end{align*}
 \end{corollary}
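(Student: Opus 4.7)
The proof simply concatenates three essentially formal steps, each relying on the preceding structural results, so my plan is to verify them in turn and then assemble the chain.

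\smallskip

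\textbf{Step 1 (Injectivity of $\beta^*$).} By Definition~\ref{5.1} the morphism $\beta: L^2_P \times_R (\overline{R\cdot\fp}) \to \overline{L^2_P\cdot\fp}$ is proper and surjective, and both sides are irreducible (the source being a fiber bundle over the irreducible projective variety $L^2_P/R$ with irreducible fiber $\overline{R\cdot\fp}$). Any $f\in\bc[\overline{L^2_P\cdot\fp}]$ with $\beta^*f=0$ then vanishes on all of the target; hence $\beta^*$ is injective. I would include this as a one-line observation.

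\smallskip

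\textbf{Step 2 (Associated bundle identification).} This is the standard identification between regular functions on an associated bundle with affine fiber and equivariant-sections over the base. Since $\overline{R\cdot\fp}$ is affine (closed in $Q$), the projection $\pi: L^2_P\times_R(\overline{R\cdot\fp})\to L^2_P/R$ is an affine morphism whose direct image $\pi_*\mathcal O$ is the sheaf of sections of the (possibly infinite-rank) vector bundle $L^2_P\times_R \bc[\overline{R\cdot\fp}]$ obtained from the $R$-module $\bc[\overline{R\cdot\fp}]$. Taking global sections gives
\[
\bc\bigl[L^2_P\times_R (\overline{R\cdot\fp})\bigr] \;=\; H^0\bigl(L^2_P/R,\; \bc[\overline{R\cdot\fp}]\bigr),
\]
where the right hand side is the author's shorthand for this induced-sheaf construction. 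This is purely formal; I would cite it and not reprove it.

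\smallskip

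\textbf{Step 3 (Injectivity of $\alpha^*$ and passage to $H^0$).} By Proposition~\ref{5.3}, $\alpha:(S^*\times\cz)//\bc^*\to \overline{R\cdot\fp}$ is a finite birational (in particular dominant) $R$-equivariant morphism. Pullback by a dominant morphism is injective on regular functions, so
\[
\alpha^*:\bc[\overline{R\cdot\fp}]\hookrightarrow \bc\bigl[(S^*\times\cz)//\bc^*\bigr]=\bc[S^*\times\cz]^{\bc^*}
\]
is an $R$-equivariant injection of $R$-modules. Applying the left-exact functor $H^0(L^2_P/R,-)$ to the induced sheaf injection preserves injectivity, yielding the second inclusion in the corollary. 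Concatenating the three steps produces the displayed chain.

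\smallskip

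There is essentially no obstacle here: all inputs (properness/surjectivity of $\beta$, finiteness/birationality of $\alpha$) have already been proved in Lemma~\ref{5.2} and Proposition~\ref{5.3}, and the middle isomorphism is a standard fact about associated bundles. The only thing requiring a small sanity check is that $\bc[\overline{R\cdot\fp}]$, viewed as an infinite-dimensional $R$-module, genuinely gives a well-defined quasi-coherent sheaf on $L^2_P/R$ whose global sections compute regular functions on the total space; this is handled by covering $L^2_P/R$ by $R$-trivializing affine opens and gluing.
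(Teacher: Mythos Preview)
Your proposal is correct and matches the paper's approach: the paper simply states the corollary as an immediate consequence of Lemma~\ref{5.2} and Proposition~\ref{5.3} (together with the properness and surjectivity of $\beta$ noted in Definition~\ref{5.1}), without writing out a proof. Your three steps are exactly the formal unpacking of that sentence.
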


\section{Determination of $H^0\bigl(L_P^2/R, \bc[S^*\times
\cz]^{\bc^*}\bigr)$}

We continue to follow the notation from the last section. In particular,
$3 \leq n <m$. For any
$d\geq 0$, we have the canonical inclusion:
  \[
j: H^0 \Bigl( L_P^2/R, (\bc [S^*] \otimes \bc^d[\cz])^{\bc^*}\Bigr)
\hookrightarrow H^0\Bigl( L_P^2/R, (\bc [S^*\backslash S_1^* ]
\otimes \bc^d[\cz])^{\bc^*}\Bigr),\] where $\bc^d[\cz]$ denotes the
space of degree $d$-homogeneous functions on $\cz\subset S^n(E^*)$.
Thus, $\bc^d[\cz]$ is a quotient
 of $S^d(S^n(E))$. In this section, we will
determine the image of $j$.

For any $R$-module $M$, $H^0(L_P^2/R, M)$ can canonically be
identified with the space of regular maps
\[
\Bigl\{ \phi : L_P^2 \to M:
 \,\, \phi (\ell r) = r^{-1}\cdot (\phi (\ell )),\, \forall \ell\in L_P^2, r\in
 R\Bigr\}.
 \]
 Thus, by the Peter-Weyl theorem and the Tannaka-Krein duality (cf.
 [BD, Chap. III]),
 \beqn \label{e6.1}
H^0\Bigl( L_P^2/R, M\Bigr) \simeq \bigoplus_{\lam=(\lam_1\geq \cdots
\geq\lam_{n^2+1})\in D(L_P^2)} V_{L_P^2}(\lam)^*\otimes \Hom_R
\Bigl( V_{L_P^2}(\lam)^*, M\Bigr).\eeqn We will apply this to the
cases $M=(\bc [S^*] \otimes \bc^d[\cz])^{\bc^*}$ and $M=(\bc
[S^*\backslash S_1^* ] \otimes \bc^d[\cz])^{\bc^*}$.

\begin{lemma} \label{6.1} Take any $\lam=(\lam_1\geq \cdots
\geq\lam_{n^2+1})\in D(L_P^2)$ and any $d\geq 0$. Then, the
canonical inclusion
\[\Hom_R \bigl( V_{L_P^2}(\lam)^*, (\bc
[S^* ]\otimes \bc^d[\cz])^{\bc^*}\bigr) \hookrightarrow \Hom_R \bigl(
V_{L_P^2}(\lam)^*, (\bc [S^*\backslash S_1^* ]\otimes
\bc^d[\cz])^{\bc^*}\bigr)\] is an isomorphism if
$\lam_1\leq 0$.

Moreover, if $\lam_1 >0$, then the left side is $0$.
\end{lemma}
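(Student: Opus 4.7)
The plan is to exploit the $T_0$-weight decomposition, where $T_0 := \Aut(\bc e_{1,1}) \subset L_R$. Setting $N := (m-n)d$, $y := e_{1,1} \in S \subset \bc[S^*]$, $M^{\bc^*} := (\bc[S^*] \otimes \bc^d[\cz])^{\bc^*}$, and $(M')^{\bc^*} := (\bc[S^* \setminus S_1^*] \otimes \bc^d[\cz])^{\bc^*}$, the $\bc^*$-action \eqref{e5.1} gives $y$ and each $e_{i,j} \in S_1 \subset \bc[S^*]$ weight $-1$, while $\bc^d[\cz]$ has weight $(m-n)d$. Hence
\[ M^{\bc^*} = \bigoplus_{k=0}^{N} y^k\, S^{N-k}(S_1) \otimes \bc^d[\cz], \qquad (M')^{\bc^*} = \bigoplus_{k \in \bz,\, k \leq N} y^k\, S^{N-k}(S_1) \otimes \bc^d[\cz]. \]
Since $T_0$ acts on $y$ with weight $+1$ and trivially on $S_1$ and on $\cz$, the summand $y^k\, S^{N-k}(S_1) \otimes \bc^d[\cz]$ is precisely the $T_0$-weight $k$ subspace.

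For the first assertion, the $T_0$-weights of $V_{L_P^2}(\lambda)^*$ are at least $-\lambda_1$ (they are the negatives of the first coordinates of the weights of $V_{L_P^2}(\lambda)$, which are bounded above by $\lambda_1$), so under $\lambda_1 \leq 0$ they are non-negative. Since any $R$-equivariant map is $T_0$-equivariant and the non-negative $T_0$-weight parts of $M^{\bc^*}$ and $(M')^{\bc^*}$ coincide, every $R$-equivariant map $\phi : V_{L_P^2}(\lambda)^* \to (M')^{\bc^*}$ has image inside $M^{\bc^*}$, which yields the surjectivity of the inclusion.

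For the second assertion, I filter $M^{\bc^*}$ by the decreasing chain of $R$-submodules $F^k := y^k\, S^{N-k}(S) \otimes \bc^d[\cz]$ for $0 \leq k \leq N+1$ ($R$-stability following from the $R$-stability of the line $\bc e_{1,1}$). The successive quotients
\[ F^k/F^{k+1} \cong y^k\, S^{N-k}(S_1) \otimes \bc^d[\cz] \]
carry a trivial $U_R$-action (since $U_R$ fixes $e_{1,1}$, acts trivially on $S/\bc e_{1,1} \cong S_1$, and acts trivially on $\cz$) and have constant $T_0$-weight $k$. For any $R$-module $N$ with trivial $U_R$-action the standard adjunction gives $\Hom_R(V_{L_P^2}(\lambda)^*, N) = \Hom_{L_R}((V_{L_P^2}(\lambda)^*)_{U_R}, N)$; using the classical description of $V_{L_P^2}(\lambda)^{U_R}$ as the irreducible $L_R$-module with highest $L_R$-weight $(\lambda_1;\lambda_2,\dots,\lambda_{n^2+1})$, the dual $(V_{L_P^2}(\lambda)^*)_{U_R} \cong (V_{L_P^2}(\lambda)^{U_R})^*$ is an irreducible $L_R$-module on which $T_0$ acts by the constant scalar $-\lambda_1$. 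When $\lambda_1 > 0$ one has $-\lambda_1 < 0 \leq k$, so no nonzero $L_R$-equivariant map exists, forcing $\Hom_R(V_{L_P^2}(\lambda)^*, F^k/F^{k+1}) = 0$ for every $k \in \{0,1,\dots,N\}$. Downward induction on $k$ via the left-exact sequence of $\Hom_R(V_{L_P^2}(\lambda)^*,-)$ applied to $0 \to F^{k+1} \to F^k \to F^k/F^{k+1} \to 0$, starting from $F^{N+1} = 0$, then yields $\Hom_R(V_{L_P^2}(\lambda)^*, M^{\bc^*}) = 0$.

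The main obstacle will be verifying that $(V_{L_P^2}(\lambda)^*)_{U_R}$ is irreducible as an $L_R$-module with constant $T_0$-eigenvalue $-\lambda_1$; this rests on the classical structure theorem for $U_R$-invariants in an irreducible $L_P^2$-module, and once this structural input is in hand, the filtration argument is essentially formal.
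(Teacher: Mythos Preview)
Your proof is correct. The core idea---exploiting the $T_0=\Aut(\bc e_{1,1})$-grading on both sides---is the same as the paper's, but the execution differs. The paper argues pointwise on the lowest weight vector $v_\lambda^*\in V_{L_P^2}(\lambda)^*$ (which is cyclic under $R\supset B$): writing $\phi_1:=\phi(v_\lambda^*)$ and using equivariance under $\hat z=[z,1,\dots,1]\in T_0$, it shows directly that $\phi_1=z_{1,1}^{-\lambda_1}P_{-\lambda_1}(z_{i,j},x)$, so $\phi_1$ is regular on all of $S^*\times\cz$ precisely when $-\lambda_1\geq 0$, and is forced to vanish in $M^{\bc^*}$ when $\lambda_1>0$. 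Your first assertion is the global version of the same observation: the $T_0$-weight decomposition of $V_{L_P^2}(\lambda)^*$ is concentrated in weights $\geq -\lambda_1$, and the nonnegative-$T_0$-weight parts of $M^{\bc^*}$ and $(M')^{\bc^*}$ agree.

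For the second assertion your filtration argument is valid but heavier than needed. Once you know $M^{\bc^*}=\bigoplus_{k=0}^N y^k S^{N-k}(S_1)\otimes\bc^d[\cz]$ has only nonnegative $T_0$-weights, any $T_0$-equivariant $\phi$ must kill the lowest weight vector $v_\lambda^*$ (which has $T_0$-weight $-\lambda_1<0$), and then cyclicity of $v_\lambda^*$ under $R$ forces $\phi=0$; this is essentially the paper's argument. Your route through the $R$-filtration $F^k=y^k S^{N-k}(S)\otimes\bc^d[\cz]$, the coinvariant adjunction $\Hom_R(V_{L_P^2}(\lambda)^*,N)\cong\Hom_{L_R}\bigl((V_{L_P^2}(\lambda)^*)_{U_R},N\bigr)$ for $U_R$-trivial $N$, and the identification $(V_{L_P^2}(\lambda)^*)_{U_R}\cong(V_{L_P^2}(\lambda)^{U_R})^*$ is correct (the latter is the standard structural fact that for a parabolic $P=LU$ and an irreducible $G$-module $V$, both $V^U$ and $V_U$ are irreducible $L$-modules, with $T_0$ central in $L_R$ forcing a constant $T_0$-eigenvalue), but it amounts to rediscovering, via more machinery, that the only obstruction is the single $T_0$-weight $-\lambda_1$.
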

\begin{proof}
Take $\phi\in\Hom_R \bigl( V_{L_P^2}(\lam)^*, (\bc [S^*\backslash
S_1^* ]\otimes \bc^d[\cz])^{\bc^*}\bigr)$.  Let $v^*_{\lam}\in
V_{L_P^2}(\lam)^*$ be the lowest weight vector of weight $-\lam$.
Then, $\phi$ is completely determined by its value on $v^*_{\lam}$.
Let
  \[
\phi_1 := \phi (v^*_{\lam}) : (S^*\backslash S_1^*) \times \cz \to \bc
  \]
be the corresponding map. For $z\in \bc^*$, take the diagonal matrix
$\hat{z} = [z, 1, \dots, 1] \in L_P^2$ with respect to the basis
$\{e_{1,1}, e_{i,j}\}_{m-n+1\leq i,j\leq m}$. Then, $\phi (\hat{z}
v^*_{\lam}) = \hat{z}\cdot \phi (v^*_{\lam} )$, i.e.,
$e^{-\lam}(\hat{z})\phi_1 = \hat{z}\cdot \phi_1.$ This gives
$z^{-\lam_1}\phi_1 = \hat{z}\cdot \phi_1$, i.e.,
 \begin{align}\label{e6.2} z^{-\lam_1}\phi_1\bigl( (z_{1,1},
z_{i,j}), x\bigr) &=
  \phi_1\bigl( \hat{z}^{-1} \bigl( (z_{1,1},z_{i,j}), x\bigr)\bigr) \notag \\
  &= \phi_1 \bigl( (z z_{1,1},z_{i,j}), x\bigr) ,
  \end{align}
  where $\{z_{1,1}, z_{i,j}\}$ are the coordinates on $S^*$ with
  respect to the basis $\{e_{1,1}, e_{i,j}\}$ of $S$.
Write
  \[
\phi_1\bigl( (z_{1,1}, z_{i,j}), x\bigr) = \sum_{\ell\in\bz}
z^{\ell}_{1,1}\, P_{\ell} (z_{i,j}, x),
  \]
for some $P_{\ell} (z_{i,j}, x) \in\bc [S_1^*]\otimes \bc^d [\cz]$.
Equation ~\eqref{e6.2} gives
  \[
z^{-\lam_1} \sum_{\ell\in\bz} z^{\ell}_{1,1} P_{\ell} (z_{i,j}, x) =
\sum_{\ell\in\bz} z^{\ell} z^{\ell}_{1,1} P_{\ell} (z_{i,j}, x),
  \]
for all $z_{1,1}, z\in \bc^*$, $z_{i,j}\in\bc$ and  $x\in \cz$.  For
any $\ell\in\bz$ such that $P_{\ell} (z_{i,j}, x)\neq 0$ (for some
$z_{i,j}\in\bc$ and some $x\in \cz$), from the above equation, we get
$z^{-\lam_1} = z^{\ell}$.  In particular,
  \[
\phi_1 \bigl( (z_{1,1}, z_{i,j}), x\bigr) = z^{-\lam_1}_{1,1}
P_{-\lam_1} (z_{i,j}, x).
  \]
Thus, if nonzero,  $\phi_1 : (S^*\backslash S_1^*) \times \cz \to\bc$
extends to a morphism $S^*\times \cz\to \bc$ iff $-\lam_1\geq 0$. This proves the
lemma.
\end{proof}
As a corollary of the above lemma and the identity ~\eqref{e6.1}, we get
 the following.

  \begin{proposition} \label{6.2} For any $d\geq 0$, let
  \[
H^0\Bigl( L_P^2/R, (\bc [S^*\backslash  S_1^* ] \otimes
\bc^d[\cz])^{\bc^*}\Bigr) = \bigoplus_{\lam =(\lam_1\geq \cdots\geq
\lam_{n^2+1})\in D(L_P^2)} m_{\lam}(d)\, V_{L_P^2}(\lam)^*.
  \]
 Then,
 \[
H^0\Bigl( L_P^2/R, (\bc [S^*] \otimes \bc^d[\cz])^{\bc^*}\Bigr) =
\bigoplus_{\lam =(\lam_1\geq \cdots\geq \lam_{n^2+1})\in D(L_P^2):
\lam_1\leq 0} m_{\lam}(d)\, V_{L_P^2}(\lam)^* .
  \]
  \end{proposition}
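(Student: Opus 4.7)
The plan is to apply the canonical decomposition~\eqref{e6.1} to both $R$-modules
$M := (\bc[S^*] \otimes \bc^d[\cz])^{\bc^*}$ and
$M' := (\bc[S^* \setminus S_1^*] \otimes \bc^d[\cz])^{\bc^*}$
and then read off how Lemma~\ref{6.1} compares the two multiplicity spaces one weight at a time. The inclusion of $R$-modules $M \hookrightarrow M'$ (induced by restriction of regular functions from $S^*$ to the open subset $S^* \setminus S_1^*$) commutes with the decomposition, so we get, for each $\lam \in D(L_P^2)$, a corresponding inclusion of multiplicity spaces $\Hom_R(V_{L_P^2}(\lam)^*, M) \hookrightarrow \Hom_R(V_{L_P^2}(\lam)^*, M')$.

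Next, I invoke Lemma~\ref{6.1} term-by-term. That lemma says exactly that the inclusion of multiplicity spaces is an isomorphism when $\lam_1 \leq 0$, and that the left-hand multiplicity space vanishes when $\lam_1 > 0$. Consequently, in the Peter-Weyl/Tannaka-Krein expansion of $H^0(L_P^2/R, M)$, the coefficient of $V_{L_P^2}(\lam)^*$ equals $m_\lam(d)$ when $\lam_1 \leq 0$ and is zero otherwise. Summing over $\lam$ gives precisely the stated equality.

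I do not expect any real obstacle: the substantive analytic content has already been absorbed into Lemma~\ref{6.1}, where the vanishing/extension dichotomy governed by the sign of $\lam_1$ was extracted from the $\bc^*$-equivariance condition~\eqref{e5.1} by a Laurent expansion in the coordinate $z_{1,1}$. The present proposition is the formal bookkeeping step that promotes that lemma, weight by weight, to a statement about the full $H^0$.
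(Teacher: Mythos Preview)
Your proposal is correct and follows exactly the route taken in the paper: the proposition is stated as an immediate corollary of Lemma~\ref{6.1} together with the Peter--Weyl/Tannaka--Krein decomposition~\eqref{e6.1}, and you have simply spelled out that deduction term by term.
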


Define a new action of $R$ on $\cz$ by \beqn \label{e6.0} r\odot
x=\chi(r)^{n-m} r\cdot x,\eeqn where $\chi:R\to \bc^*$ is the
character defined by $\chi(r)=(re_{1,1})_{1,1},$ where $(X)_{1,1}$
is defined in the proof of Proposition ~\ref{4.5}.

\begin{lemma} \label{6.3} For any $d\geq 0$, there is a canonical
isomorphism of $L_P^2$-modules:
\[H^0\Bigl( L_P^2/R, (\bc [S^*\backslash  S_1^* ] \otimes
\bc^d[\cz])^{\bc^*}\Bigr)\simeq H^0\Bigl( L_P^2/L_R,
\bc^d[\cz]^{\chi}\Bigr),\] where $\bc^d[\cz]^{\chi}$ is the same space
as $\bc^d[\cz]$ but the $L_R$-module structure on $\bc^d[\cz]^{\chi}$ is
induced from the action $\odot$ of $R$ (in particular, $L_R$) on
$\cz$.
\end{lemma}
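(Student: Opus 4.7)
The plan is to establish the identification via induction in stages along the chain $L_R\subset R\subset L_P^{2}$: for any $L_R$-module $V$ (inflated to an $R$-module by letting $U_R$ act trivially when needed), one has the canonical isomorphism
\[H^{0}\bigl(L_P^{2}/L_R,\,V\bigr)\;\simeq\;H^{0}\Bigl(L_P^{2}/R,\;H^{0}(R/L_R,\,V)\Bigr),\]
where $H^{0}(R/L_R,V)=\mathrm{Ind}_{L_R}^{R}V$ carries its natural $R$-module structure by left translation. Thus it suffices to produce a canonical $R$-module isomorphism
\[\bigl(\bc[S^{*}\!\setminus\! S_{1}^{*}]\otimes \bc^{d}[\cz]\bigr)^{\bc^{*}}\;\simeq\;H^{0}\bigl(R/L_R,\,\bc^{d}[\cz]^{\chi}\bigr),\]
and then apply the functor $H^{0}(L_P^{2}/R,-)$.

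I would deduce the displayed $R$-module isomorphism from an $R$-equivariant isomorphism of varieties
\[\bigl((S^{*}\!\setminus\! S_{1}^{*})\times\cz\bigr)//\bc^{*}\;\simeq\;R\times_{L_R}\cz^{\odot},\]
where $\cz^{\odot}$ denotes $\cz$ equipped with the $L_R$-action $\odot$. For the left-hand side, every $\bc^{*}$-orbit in $(S^{*}\!\setminus\! S_{1}^{*})\times\cz$ meets the slice $\{x_{1,1}=1\}\times\cz\simeq S_{1}^{*}\times\cz$ in exactly one point, since the $\bc^{*}$-orbit of $(x_{1,1}e_{1,1}^{*}+\zeta,\,f)$ contains $(e_{1,1}^{*}+x_{1,1}^{-1}\zeta,\,x_{1,1}^{m-n}f)$. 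For the right-hand side, the Levi decomposition $R=U_R\cdot L_R$ yields $R\times_{L_R}\cz^{\odot}\simeq U_R\times\cz^{\odot}$ as $R$-varieties, with $U_R$ translating the first factor by left multiplication and $L_R$ acting by conjugation on $U_R$ together with $\odot$ on $\cz$. I would match these two $R$-varieties via the identification $U_R\simeq S_{1}^{*}$ sending $u=\bigl(\begin{smallmatrix}1&b\\ 0&I\end{smallmatrix}\bigr)\mapsto -b$.

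The main work is then checking that the two $R$-actions on $S_{1}^{*}\times\cz$ agree. For $r=\bigl(\begin{smallmatrix}a&b\\ 0&C\end{smallmatrix}\bigr)\in R$, a direct computation of the dual action on $\lambda=e_{1,1}^{*}+\sum_{\alpha}\zeta_{\alpha}e_{\alpha}^{*}$ followed by the $\bc^{*}$-renormalisation that returns the first coordinate to $1$ yields
\[\zeta'_{\alpha}=-(bC^{-1})_{\alpha}+a\sum_{\beta}(C^{-1})_{\beta\alpha}\,\zeta_{\beta},\qquad f'=a^{n-m}(C\cdot f)=r\odot f.\]
Specialising to $r\in U_R$ ($a=1$, $C=I$) gives pure translation $\zeta\mapsto\zeta-b$, which matches left multiplication on $U_R$ under the convention $u\leftrightarrow -b$; specialising to $r\in L_R$ ($b=0$) gives the conjugation formula $b(u)\mapsto abC^{-1}$ on $U_R$ together with $\odot$ on $\cz$. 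Here I use crucially that $U_R$ and $\Aut(\bc e_{1,1})$ act trivially on $\cz$, so that the $\cz$-component depends only on the $\Aut S_{1}$-part of $r$ twisted by $\chi(r)^{n-m}$. Passing to the homogeneous degree-$d$ subspace in the $\cz$-variable of the resulting ring isomorphism $\bc[S_{1}^{*}\times\cz]\simeq \bc[U_R\times\cz^{\odot}]$ gives the required $R$-module isomorphism, and induction in stages completes the proof. The principal obstacle is keeping the sign and twist conventions (slice normalisation, parametrisation of $U_R$, and the definition of $\odot$) consistent throughout the equivariance verification.
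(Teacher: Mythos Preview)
Your proposal is correct and follows essentially the same route as the paper: both reduce, via induction in stages (the paper phrases this as the fibration $R/L_R\to L_P^2/L_R\to L_P^2/R$), to an $R$-equivariant identification of $\bigl((S^*\setminus S_1^*)\times\cz\bigr)//\bc^*$ with $R\times_{L_R}\cz^\odot$, and both realize this via the same affine slice $\{\lambda_{1,1}=1\}\simeq S_1^*$ matched with $R/L_R\simeq U_R$. The only difference is presentational: the paper encodes the isomorphism by writing down the single map $\gamma_1(rL_R)(X)=\chi(r)(r^{-1}X)_{1,1}$ and checking the cocycle identity $\gamma_1(r'rL_R)=\chi(r')\,r'\cdot\gamma_1(rL_R)$, whereas you trivialize both sides first and then verify equivariance by explicit matrix computations for $U_R$ and $L_R$ separately.
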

\begin{proof} From the fibration $R/L_R\to L_P^2/L_R\to L_P^2/R$, we
get
\[H^0\Bigl( L_P^2/L_R,
\bc^d[\cz]^{\chi}\Bigr)\simeq H^0\Bigl( L_P^2/R, \bc[R/L_R]\otimes
(\bc^d[\cz]^{\chi})\Bigr).\] So, it suffices to define an $R$-module
isomorphism
\[\gamma: (\bc [S^*\backslash  S_1^* ] \otimes
\bc^d[\cz])^{\bc^*}\to  \bc[R/L_R]\otimes (\bc^d[\cz]^{\chi}).\] First,
define a morphism $\gamma_1: R/L_R\to S^*\backslash  S_1^*$ by
$(\gamma_1(rL_R))(X)=\chi (r)(r^{-1}X)_{1,1}$, for $r\in R$ and $X\in
S$. Then, $\gamma_1$ satisfies: \beqn \label{e6.-1}
\gamma_1(r'rL_R)=\chi(r')r'\cdot\gamma_1(rL_R),\,\,\text{for any}\,
r,r'\in R.\eeqn
 Now, define the morphism
\[\bar{\gamma}_1:R/L_R\times (\cz,\odot)\to ((S^*\backslash  S_1^*)\times
\cz)//\bc^*,\,\, (rL_R,x)\mapsto [\gamma_1(rL_R),x],\] where $(\cz,\odot)$
denotes the variety $\cz$ together with the action $\odot$ of $R$.
From ~\eqref{e6.-1}, it is easy to see that $\bar{\gamma}_1$ is an
$R$-equivariant morphism. Moreover, it is a biregular isomorphism.
(Observe that all the $\bc^*$-orbits in $(S^*\backslash S_1^*)\times
\cz$ are closed and hence $((S^*\backslash  S_1^*)\times \cz)//\bc^*$ is
the same as the orbit space $((S^*\backslash S_1^*)\times
\cz)/\bc^*$.) Now, $\gamma$ is nothing but the induced map from $\bar{\gamma}_1$.
\end{proof}

  Now, we determine $H^0 \bigl( L_P^2/L_R, \bc^d [\cz]^\chi\bigr)$.

  \begin{lemma} \label{6.4} For any $d\geq 0$,
    \beqn \label{e6.3}
H^0 \bigl( L_P^2/L_R, \bc^d [\cz]^\chi\bigr) \simeq \bigoplus_{\lam
=(\lam_1\geq \cdots \geq \lam_{n^2+1})\in D(L_P^2)} V_{L_P^2}(\lam)
\otimes\Hom_{L_R} (V_{L_P^2}(\lam), \bc^d[\cz]^\chi) .
    \eeqn
Thus, for any $\lam =(\lam_1\geq \lam_2\geq \cdots \geq
\lam_{n^2+1})\in D(L_P^2)$, $V_{L_P^2}(\lam)$ appears in $H^0 \bigl(
L_P^2/L_R, \bc^d [\cz]^\chi\bigr)$ if and only if  the following two
conditions are satisfied:
  \begin{enumerate}
\item $|\lam |=dm$, where $|\lam|:=\sum\lam_i,$ and
\item $\exists \,\mu =(\mu_1\geq \cdots \geq \mu_{n^2})$ such that $\mu$ interlaces $\lam$, i.e.,
\[ \lam_1 \geq \mu_1 \geq \lam_2 \geq \mu_2 \geq \cdots \geq \lam_{n^2}
\geq \mu_{n^2} \geq \lam_{n^2+1},
  \]
and the $GL(S_1 )$-irreducible module ${V}_{\GL(S_1)}(\mu)$ appears
in $\bc^d[\cz]$.
  \end{enumerate}
  \end{lemma}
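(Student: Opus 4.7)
The first statement (display \eqref{e6.3}) is the algebraic Peter--Weyl / Tannaka--Krein decomposition of the induced module $H^0(L_P^2/L_R,-)$, exactly analogous to the earlier identity \eqref{e6.1} but with $R$ replaced by $L_R$; so the substantive content is the equivalence describing when $V_{L_P^2}(\lam)$ appears. My plan is to apply Frobenius reciprocity to reduce the question to computing $\dim \Hom_{L_R}(V_{L_P^2}(\lam), \bc^d[\cz]^{\chi})$, and then to decompose each of the two $L_R$-modules appearing in that Hom under the factorization $L_R = \Aut(\bc e_{1,1}) \times \Aut(S_1) = \bc^* \times \GL(S_1)$.

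For the restriction $V_{L_P^2}(\lam)|_{L_R}$, I would invoke the classical Gelfand--Tsetlin branching rule for $\GL(n^2+1) \downarrow \GL(1) \times \GL(n^2)$ applied to the decomposition $S = \bc e_{1,1} \oplus S_1$, to obtain
\[
V_{L_P^2}(\lam)|_{L_R} \;\simeq\; \bigoplus_{\mu} \bc_{|\lam|-|\mu|} \boxtimes V_{\GL(S_1)}(\mu),
\]
where $\mu = (\mu_1 \geq \cdots \geq \mu_{n^2})$ runs over those sequences that interlace $\lam$, and $\bc_{k}$ denotes the one-dimensional $\bc^*$-module on which $z$ acts by $z^{k}$.

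For the $L_R$-structure on $\bc^d[\cz]^{\chi}$: the group $\Aut(\bc e_{1,1})$ (as well as $U_R$) acts trivially on $\cz$ under the ordinary $G$-action on $S^n(E^*)$, so the original $L_R$-action on $\bc^d[\cz]$ factors through $\GL(S_1)$. The twist $r \odot x = \chi(r)^{n-m} r \cdot x$, when passed to the space of homogeneous degree $d$ functions on $\cz \subset S^n(E^*)$, multiplies the standard action by the scalar $\chi(r)^{d(m-n)}$. If I write the $\GL(S_1)$-decomposition of $\bc^d[\cz]$ as $\bigoplus_\nu n_\nu^{(d)} V_{\GL(S_1)}(\nu)$, this yields
\[
\bc^d[\cz]^{\chi} \;\simeq\; \bigoplus_{\nu}\, n_{\nu}^{(d)}\, \bc_{d(m-n)} \boxtimes V_{\GL(S_1)}(\nu).
\]

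Combining the two displays, a nonzero contribution to $\Hom_{L_R}(V_{L_P^2}(\lam), \bc^d[\cz]^{\chi})$ requires the existence of some $\mu$ interlacing $\lam$ with $|\lam|-|\mu| = d(m-n)$ and $n_{\mu}^{(d)} > 0$; its dimension is then precisely $n_{\mu}^{(d)}$ summed over such $\mu$. To recover the two conditions stated in the lemma, I would observe that the center of $\GL(S_1)$ acts on $\bc^d[\cz]$ by the character $z \mapsto z^{nd}$ (because $\cz \subset S^n(E^*)$ and we are looking at homogeneous degree $d$ functions), so $n_{\mu}^{(d)} > 0$ forces $|\mu| = nd$; hence the weight constraint $|\lam|-|\mu| = d(m-n)$ becomes $|\lam| = dm$, giving condition (1), while condition (2) is exactly the existence of such a $\mu$. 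Conversely, if (1) and (2) both hold, then $|\mu| = nd$ (automatic from (2) and the center of $\GL(S_1)$), so $|\lam|-|\mu| = dm - nd = d(m-n)$ and the contribution is nonzero. No step here is genuinely obstructive; the only delicate bookkeeping is tracking the scalar weights introduced by the twist $\chi$ and the grading of $\bc^d[\cz]$ by the center of $\GL(S_1)$.
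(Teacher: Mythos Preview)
Your proposal is correct and follows essentially the same approach as the paper: Peter--Weyl/Tannaka--Krein for \eqref{e6.3}, the Gelfand--Tsetlin branching rule for $\GL(S)\downarrow\GL(S_1)$ to produce the interlacing condition, and a central-character computation to extract $|\lam|=dm$. The only cosmetic difference is that the paper tracks the scalar weight via the central element $\bar z\hat z=zI\in L_P^2$ (which acts on $\cz$ under $\odot$ by $z^{-m}$, hence on $\bc^d[\cz]^\chi$ by $z^{dm}$), whereas you separate $L_R=\bc^*\times\GL(S_1)$ and compute the $\Aut(\bc e_{1,1})$-weight $d(m-n)$ together with $|\mu|=dn$; the two computations are equivalent and yield the same constraint.
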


  \begin{proof} The isomorphism \eqref{e6.3} of course follows from the Peter-Weyl theorem and the
  Tannaka-Krein duality.

   For $z\in\bc^*$, let
$\bar{z}$ be the diagonal matrix $[1, z, \dots, z] \in \Aut
S_1\subset \Aut S$ and $\hat{z}$ the diagonal matrix $[z, 1, \dots,
1]\in \Aut (\bc e_{1,1})\subset \Aut S$. Then, $\bar{z}\hat{z}$ acts
on $\cz$ via
  \beqn \label{e6.4}  (\bar{z}\hat{z} )\odot x = z^{n-m} (\bar{z}\cdot x)=z^{-m}x .
  \eeqn
By the branching law for the pair ($L_P^2=\GL(S), \GL(S_1)$) (cf.
[GW, Theorem 8.1.1]), we get, for any $\lam \in D(L_P^2)$, \beqn
\label{e6.5} V_{L_P^2}(\lam)\simeq \oplus_{\mu \in D(\GL(S_1)): \mu
\,\text{interlaces}\, \lam}\,\,V_{\GL(S_1)}(\mu),\,\,\,\text{as
$\GL(S_1)$-modules}. \eeqn Now, since $\GL(S_1)$ and
$\bar{z}\hat{z}$ generate the group $L_R$, combining the equations
~\eqref{e6.3}--\eqref{e6.5}, we get the second part of the lemma.
(Observe that the two actions $\cdot$ and $\odot$ of $\GL(S_1)$ on
$\cz$ coincide.)
  \end{proof}

  Combining Proposition ~\ref{6.2} with the Lemmas
  ~\ref{6.3}-- \ref{6.4} and the identities \eqref{e6.4}-- \eqref{e6.5},
  we get the following:

  \begin{theorem} \label{6.5} For any $d\geq 0$, decompose
  \[\bc^d[\cz]\simeq \oplus_{\mu\in D(\GL(S_1))}\,\, q_\mu(d)V_{\GL(S_1)}(\mu), \,\,\text{as $\GL(S_1)$-modules}.\]
  Then, as $L_P^2$-modules,
\begin{align}\label{e6.6} H^0\Bigl( & L_P^2/R, (\bc [S^*] \otimes
\bc^d[\cz])^{\bc^*}\Bigr) \simeq \notag\\ &\bigoplus_{\lam
=(\lam_1\geq \cdots\geq \lam_{n^2+1}\geq 0):|\lam|=dm}\,\,
\Bigl(\sum_{\mu=(\mu_1\geq \dots \geq \mu_{n^2}\geq 0): \mu
\,\text{interlaces}\, \lam} q_\mu(d)\Bigr) \, V_{L_P^2}(\lam).
  \end{align}
  In particular, $V_{L_P^2}(\lam)$ occurs in $H^0\Bigl( L_P^2/R, (\bc [S^*] \otimes
\bc^d[\cz])^{\bc^*}\Bigr)$ if and only if the following two conditions
are satisfied:
  \begin{enumerate}
  \item  $\lam =
(\lam_1\geq \cdots \geq \lam_{n^2+1}\geq 0)$ and  $|\lam |=dm$, and
  \item there exists a $ \mu = (\mu_1\geq \cdots \geq \mu_{n^2}\geq 0)$ which interlaces $\lam$
and such that  the irreducible $GL(S_1)$-module $V_{\GL(S_1)}(\mu)$
occurs in $\bc^d[\cz].$
\end{enumerate}

(Observe that if $V_{\GL(S_1)}(\mu)$ occurs in $\bc^d[\cz],$ then
automatically $|\mu |=dn$ and $\mu_{n^2}\geq 0$, since $\bc^d[\cz]$
is a $\GL(S_1)$-module quotient of $S^d(S^n(E))$.)
\end{theorem}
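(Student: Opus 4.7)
The plan is to combine Proposition~\ref{6.2}, Lemma~\ref{6.3}, Lemma~\ref{6.4} and the branching identity~\eqref{e6.5} in a single chain. First, I would use Lemma~\ref{6.3} together with Lemma~\ref{6.4} to decompose the larger ambient space
\[
H^0\bigl(L_P^2/R, (\bc[S^*\setminus S_1^*]\otimes \bc^d[\cz])^{\bc^*}\bigr) \simeq H^0(L_P^2/L_R, \bc^d[\cz]^\chi)
\]
as $\bigoplus_\nu V_{L_P^2}(\nu)\otimes \Hom_{L_R}(V_{L_P^2}(\nu), \bc^d[\cz]^\chi)$. A summand indexed by $\nu\in D(L_P^2)$ survives precisely when (a) the central element $\bar{z}\hat{z}=z\,\mathrm{Id}_S$ acts by the same scalar on source and target of the Hom: it acts by $z^{|\nu|}$ on $V_{L_P^2}(\nu)$ and, by~\eqref{e6.4} lifted to degree-$d$ polynomial functions (on which the base scalar $z^{-m}$ becomes $z^{md}$), by $z^{md}$ on $\bc^d[\cz]^\chi$, forcing $|\nu|=dm$; and (b) the branching rule~\eqref{e6.5} for $(\GL(S),\GL(S_1))$, combined with the observation that $\GL(S_1)$ acts identically under $\cdot$ and $\odot$, shows $\Hom_{L_R}(V_{L_P^2}(\nu), \bc^d[\cz]^\chi)\neq 0$ iff some $\mu$ interlacing $\nu$ satisfies $V_{\GL(S_1)}(\mu)\subset \bc^d[\cz]$, with multiplicity $\sum_\mu q_\mu(d)$ over all such interlacing $\mu$.

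Next, I would invoke Proposition~\ref{6.2} to cut this decomposition down to the subspace $H^0\bigl(L_P^2/R, (\bc[S^*]\otimes \bc^d[\cz])^{\bc^*}\bigr)$, which (in Proposition~\ref{6.2}'s parametrization by $V_{L_P^2}(\lam')^*$) keeps only the summands with $\lam'_1\leq 0$. Since $V_{L_P^2}(\lam')^*\simeq V_{L_P^2}(\nu)$ where $\nu:=-w_0\lam' = (-\lam'_{n^2+1}, \ldots, -\lam'_1)$, the cutoff $\lam'_1\leq 0$ translates into $\nu_{n^2+1}\geq 0$. Re-labeling $\nu$ as $\lam$, this yields exactly the stated characterization: $V_{L_P^2}(\lam)$ appears iff $\lam=(\lam_1\geq\cdots\geq \lam_{n^2+1}\geq 0)$ with $|\lam|=dm$ and some $\mu$ interlacing $\lam$ gives $V_{\GL(S_1)}(\mu)\subset \bc^d[\cz]$; the multiplicity formula~\eqref{e6.6} is then just the count of such $\mu$, each weighted by $q_\mu(d)$.

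The only genuinely delicate step is the bookkeeping between the dual parametrization of Proposition~\ref{6.2} and the direct parametrization of Lemma~\ref{6.4}; tracing the swap $\nu=-w_0\lam'$ is what converts the boundedness $\lam'_1\leq 0$ into the non-negativity $\lam_{n^2+1}\geq 0$ in the final statement. The parenthetical addendum ($|\mu|=dn$ and $\mu_{n^2}\geq 0$) is automatic from the fact that $\bc^d[\cz]$ is a $\GL(S_1)$-equivariant quotient of $S^d(S^n(E))$, whose irreducible constituents are polynomial of weight-sum $dn$; I would dispatch this as a closing remark.
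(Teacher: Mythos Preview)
Your proposal is correct and follows exactly the paper's approach, which consists of combining Proposition~\ref{6.2} with Lemmas~\ref{6.3}--\ref{6.4} and the identities~\eqref{e6.4}--\eqref{e6.5}. You have simply spelled out in detail the one step the paper leaves implicit, namely the passage between the dual parametrization $V_{L_P^2}(\lam')^*$ in Proposition~\ref{6.2} and the direct parametrization $V_{L_P^2}(\nu)$ in Lemma~\ref{6.4} via $\nu=-w_0\lam'$, which turns $\lam'_1\leq 0$ into $\nu_{n^2+1}\geq 0$.
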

\begin{remark} {\em Since
\[(\bc[S^*]\otimes \bc^d[\cz])^{\bc^*}\simeq S^{(m-n)d}(S)\otimes
\bc^d[\cz],\]
and $S$ is a $L_P^2$-module, we also get (using [Ku1, Lemma 8])
\begin{align*}H^0\Bigl( L_P^2/R, (\bc [S^*] \otimes
\bc^d[\cz])^{\bc^*}\Bigr)&\simeq S^{(m-n)d}(S)\otimes
H^0(L_P^2/R, \bc^d[\cz])\\
&\simeq \oplus_{\mu =(\mu_1\geq \dots \geq \mu_{n^2}): \mu_{n^2}\geq
0} \,\,q_\mu(d) S^{(m-n)d}(S)\otimes V_{L_P^2}(\hat{\mu}),
\end{align*}
where $\hat{\mu}:= (\mu_1\geq \dots \geq \mu_{n^2}\geq 0)\in
D(L_P^2).$}
\end{remark}

\section{nonnormality of the orbit closures of $\fp$}

It is easy to see that the morphism $\al$ of Section 5 induces an
injective map (for any $d \geq 0$)
\[\al^*:\bc^d[\overline{R\cdot \fp}] \hookrightarrow
(\bc [S^*] \otimes \bc^d[\cz])^{\bc^*}=S^{d(m-n)} (S) \otimes
\bc^d[\cz].\]

 \begin{proposition} \label{7.1} For any $m\geq 2n$, the inclusion
    \[
H^0\bigl( L_P^2/R, \bc^d [\overline{R\cdot \fp}]\bigr)
\hookrightarrow H^0\bigl( L_P^2/R, (\bc [S^*] \otimes
\bc^d[\cz])^{\bc^*}
\bigr),
  \]
induced from the inclusion $\al^*$, is not an isomorphism for $d=1$.
  \end{proposition}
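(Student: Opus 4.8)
The plan is to compute both sides for $d=1$ explicitly and exhibit a concrete irreducible $L_P^2$-module appearing in the right side but not in the left. First I would identify the $d=1$ right-hand side using Theorem~\ref{6.5}: for $d=1$ we have $\bc^1[\cz]$, which is a $\GL(S_1)$-submodule of $S^1(S^n(E))=S^n(E)$, and since $\perm\in S^n(S_1^*)\subset S^n(E^*)$ spans a line, $\bc^1[\cz]$ is the cyclic $\GL(S_1)$-span of $\perm$ inside $S^n(E)$, i.e. the irreducible $\GL(S_1)$-module $V_{\GL(S_1)}(\mu_\perm)$ where $\mu_\perm=(n,0,\dots,0)$ corresponds to the partition giving the permanent representation (with $n^2$ parts, exactly one equal to $n$). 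Hence, by Theorem~\ref{6.5}, $H^0(L_P^2/R,(\bc[S^*]\otimes\bc^1[\cz])^{\bc^*})$ decomposes as $\bigoplus_\lambda V_{L_P^2}(\lambda)$ over all $\lambda=(\lambda_1\geq\cdots\geq\lambda_{n^2+1}\geq 0)$ with $|\lambda|=m$ that interlace $\mu_\perm=(n,0,\dots,0)$; the interlacing condition forces $\lambda_3=\cdots=\lambda_{n^2+1}=0$ and $\lambda_1\geq n\geq\lambda_2\geq 0$, so $\lambda=(\lambda_1,\lambda_2,0,\dots,0)$ with $\lambda_1+\lambda_2=m$ and $\lambda_1\geq n\geq\lambda_2$.

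Next I would compute the $d=1$ left-hand side. By the corollary at the end of Section~5, $\bc^1[\overline{R\cdot\fp}]$ is the degree-one part of the coordinate ring of $\overline{R\cdot\fp}\subset S^m(E^*)$, i.e. a quotient of $S^m(E)$; equivalently its dual is the $R$-span of $\fp=\bar\lambda_o^{\,m-n}\perm$ inside $S^m(E^*)$. Then $H^0(L_P^2/R,\bc^1[\overline{R\cdot\fp}])$ is, by Frobenius reciprocity/Peter–Weyl, the $L_P^2$-span of $\fp$ inside $S^m(E^*)$, which is an \emph{irreducible} $L_P^2$-module — namely $V_{L_P^2}(\lambda^\circ)^*$ for a single highest weight $\lambda^\circ$, since $\overline{L_P^2\cdot\fp}$ being an orbit closure of the single vector $\fp$ means $\bc^1$ of it (the linear span) is cyclic hence irreducible as an $L_P^2$-module. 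Reading off the highest weight of $\fp=x_{1,1}^{m-n}\perm(X^o)$ under the diagonal torus of $L_P^2=\Aut S$ acting on $S=\bc e_{1,1}\oplus S_1$: the $e_{1,1}$-slot contributes $(m-n)$ and the $\perm$ factor contributes the weight $(n,0,\dots,0)$-type highest weight on $S_1$, so $\lambda^\circ=(m-n,\,n,\,0,\dots,0)$ in the $e_{1,1}$-first coordinate ordering (one should be careful about which coordinate is listed first, but in any ordering $H^0(L_P^2/R,\bc^1[\overline{R\cdot\fp}])$ is a \emph{single} irreducible).

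The conclusion is then immediate by a dimension/multiplicity count: the right side contains, for $m\geq 2n$, \emph{at least two} distinct summands $V_{L_P^2}(\lambda)$ — for instance $\lambda=(m-n,n,0,\dots,0)$ and $\lambda=(m-1,1,0,\dots,0)$, both of which satisfy $|\lambda|=m$, $\lambda_1\geq n$ (using $m-1\geq n$, which holds since $m\geq 2n>n+1$ for $n\geq 3$... actually $m\geq 2n\geq n+1$ suffices, with equality excluded when $n\geq 2$), $\lambda_2\leq n$, and these are genuinely different partitions since $m-n\neq m-1$ when $n\neq 1$ — while the left side is a single irreducible. Therefore $\al^*$ on global sections cannot be surjective. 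I would phrase the argument so that one only needs to produce \emph{one} dominant weight $\lambda$ in the right-side list that is different from $\lambda^\circ$; the hypothesis $m\geq 2n$ is exactly what guarantees enough room (e.g. $\lambda=(m-n+1,n-1,0,\dots,0)$ works when $n\geq 2$ and $m-n+1\geq n$, i.e. $m\geq 2n-1$). The main obstacle I anticipate is purely bookkeeping: getting the interlacing inequalities and the weight of $\fp$ recorded with the correct coordinate conventions (where $e_{1,1}$ sits in the ordered basis of $S$), and confirming that $\bc^1[\cz]$ is exactly the irreducible $\GL(S_1)$-module attached to $\perm$ with no extra constituents — which follows since $S^1(S^n(E))\cong S^n(E)$ and the $\GL(S_1)$-orbit of $\perm$ spans an irreducible (the permanent is a highest-weight vector for a suitable Borel of $\GL(S_1)$). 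Once those conventions are pinned down, comparing the two explicit decompositions finishes the proof.
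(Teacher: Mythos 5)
Your computation of the right-hand side for $d=1$ is correct and matches the paper: $\bc^1[\cz]\cong S^n(S_1)$ is the irreducible with highest weight $\mu=(n,0,\dots,0)$, and Theorem~\ref{6.5} then gives a direct sum of $V_{L_P^2}(\lambda)$'s over $\lambda=(\lambda_1\geq\lambda_2\geq 0\geq\cdots\geq 0)$ with $\lambda_1+\lambda_2=m$ and $\lambda_1\geq n\geq\lambda_2$, which is visibly reducible once $m\geq n+1$.

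The gap is in your treatment of the left-hand side. You assert that $H^0(L_P^2/R,\bc^1[\overline{R\cdot\fp}])$ equals the $L_P^2$-span of $\fp$ inside $S^m(E^*)$ ``by Frobenius reciprocity/Peter--Weyl,'' and then that this span is irreducible because it is ``cyclic.'' Neither step holds as stated. First, the induced module $H^0(L_P^2/R,M)$ is not the $L_P^2$-span of $M$ inside any ambient module; what Frobenius reciprocity gives is a map \emph{from} $\bc^1[\overline{L_P^2\cdot\fp}]$ \emph{into} $H^0(L_P^2/R,\bc^1[\overline{R\cdot\fp}])$ (this is exactly the map $\beta^*$ of Section~5), and a priori that map need not be surjective — indeed controlling its failure to be surjective is the entire content one needs. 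Second, ``cyclic $\Rightarrow$ irreducible'' is false for $L_P^2$-modules. (It does happen that $\langle L_P^2\cdot\fp\rangle=S^m(S^*)$ is irreducible, but only because it is a nonzero $L_P^2$-submodule of the \emph{already irreducible} module $S^m(S^*)$; that argument concerns $\bc^1[\overline{L_P^2\cdot\fp}]$, not $H^0(L_P^2/R,\bc^1[\overline{R\cdot\fp}])$.)

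What the paper actually does is more delicate and is precisely where the hypothesis $m\geq 2n$ enters. One writes the short exact sequence of $R$-modules $0\to K\to S^m(S)\to\bc^1[\overline{R\cdot\fp}]\to 0$, computes $K\cong e_{1,1}^{m-n+1}S^{n-1}(S_1)\oplus\cdots\oplus e_{1,1}^m S^0(S_1)$ by determining $\langle\overline{R\cdot\fp}\rangle\subset S^m(S^*)$, and then proves a Bott-type vanishing $H^0(L_P^2/R,K)=H^1(L_P^2/R,K)=0$ for $m\geq 2n$ (the $H^0$ vanishing uses $m>2n-2$ and the $H^1$ vanishing uses $m>2n-1$). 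Only then does the long exact sequence give $H^0(L_P^2/R,\bc^1[\overline{R\cdot\fp}])\cong H^0(L_P^2/R,S^m(S))=S^m(S)$, which is irreducible. You have also slightly misattributed the role of $m\geq 2n$: the right side is reducible already for $m\geq n+1$, so the hypothesis is really needed for the left-side cohomology vanishing, which your proposal skips entirely.
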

\begin{proof}
Of course,
$\bc^1 [\overline{R\cdot \fp}]$  is a $R$-module quotient of
$S^m(E)$; in fact, it is a $R$-module quotient of
$S^m(S).$
 Let $K$ be the kernel
  \beqn \label{e7.1} 0\to K\to S^m(S) \to \bc^1[\overline{R\cdot \fp}]
  \to 0.
  \eeqn
We first determine the linear span $\ip<\overline{R\cdot \fp}>$ of
the image of $\overline{R\cdot \fp}$ inside $S^m(S^*)$.  For $u\in
U_R, z\in \bc^*$ and $g\in\GL (S_1)$ (where $\tau_z\in \Aut (\bc e_{1,1})$
is defined by $\tau_z(e_{1,1})=ze_{1,1}$),
  \begin{align*}
\bigl( (gu\tau_z)^{-1}\cdot \fp\bigr) &(x_{1,1}e_{1,1} + \sum_{m-n+1\leq i,j\leq m}\, x_{i,j}
e_{i,j})\\
 &= \fp\bigl( (z x_{1,1} +\sum x_{i,j}a_{i,j}) e_{1,1} + g\sum x_{i,j}e_{i,j}
 \bigr) \\
  &\qquad\quad (\text{where }ue_{i,j} = e_{i,j}+a_{i,j}e_{1,1}) \\
 &= (zx_{1,1} + \sum x_{i,j}a_{i,j})^{m-n} (g^{-1}\cdot\perm )
 (\sum x_{i,j}e_{i,j}) .
   \end{align*}
For any vector space $V$, the span of $\{v^{m-n}, v\in V\}$ inside
$S^{m-n}(V)$ equals $S^{m-n}(V)$. Moreover, since $S^n(S_1^*)$ is an
irreducible $\GL(S_1)$-module, the span of $\{g^{-1}\cdot\perm
\}_{g\in \GL(S_1)}$ is equal to $S^n(S_1^*).$
 Here we
have identified $S^n(S_1^*) \hookrightarrow S^n(S^*)$ via the
projection
  $S \to S_1,\,\,
  e_{1,1} \mapsto 0.$

  Thus,
  \begin{align*}
\ip<\overline{R\cdot \fp}> &= S^n(S_1^*)\cdot S^{m-n}(S^*)\\
&= \lam_o^{m-n} S^n(S_1^*)\oplus \lam_o^{m-n-1} S^{n+1}(S_1^*)\\
&\qquad \oplus\; \cdots \, \oplus\; \lam_o^0 \; S^m(S_1^*),
  \end{align*}
  where $\lam_o\in S^*$ is defined in the proof of Proposition ~\ref{5.3}.
Thus,
  \[  K \simeq e^{m-n+1}_{1,1} S^{n-1}(S_1)\oplus\cdots\oplus e_{1,1}^m
  S^0(S_1).
   \]
None of the weights of $K$ are $L_P^2$-antidominant with respect to
the basis $\{e_{1,1}, e_{i,j}\}_{m-n+1\leq i,j\leq m}$ of $S$ if
  \[  m-n+1 > n-1,\quad\, \text{i.e., if}\, m > 2n-2.  \]
Hence,
  \beqn \label{e7.2} H^0( L_P^2/R, K) = 0, \quad \text{if}\, m> 2n-2.
  \eeqn
Also,
  \beqn \label{e7.3}
H^1 ( L_P^2/R, K) = 0, \quad\text{ if } m>2n-1.
  \eeqn
To prove this, it suffices to show that, for any weight $\mu$ of $K$
and any simple reflection $s_i$ for $L_P^2$, $s_i(-\mu +\rho )-\rho$ is not
dominant, i.e.,  $s_i\mu +\al_i$ is not antidominant.
Writing $\mu = (\mu_1, \dots, \mu_{n^2+1})$, we have
\[
\mu_1 > \mu_j+1,\quad\; \forall j\geq 2 \,\,\text{(since $m> 2n-1$)} .
  \]
Thus, if $i>1$,
  \[
(s_i\mu +\al_i)_1 = \mu_1 > (s_i\mu +\al_i)_2.
  \]
Hence, $s_i\mu +\al_i$ is not antidominant for $i>1$.  For $i=1$, we get
\[
(s_1\mu +\al_1)_2=\mu_1-1 > (s_1\mu +\al_1)_3=\mu_3.\]
Combining \eqref{e7.2}--\eqref{e7.3}, we get
  \beqn \label{e7.4}
H^0( L_P^2/R, K) = H^1 ( L_P^2/R, K) = 0\,\,\text{for all}\, m\geq 2n.
   \eeqn
   Considering the long exact cohomology sequence, corresponding to the
   coefficient sequence  ~\eqref{e7.1}, we get for all $m\geq 2n$ (by using
   ~\eqref{e7.4}),
  \beqn \label{e7.4'}
H^0( L_P^2/R, \bc^1[\overline{R\cdot \fp}])\simeq H^0(L_P^2/R, S^m(S))
 =S^m(S).
  \eeqn
  In particular, $H^0( L_P^2/R, \bc^1[\overline{R\cdot \fp}])$ is an irreducible
  $L_P^2$-module.

  Next, we determine $M=H^0(L_P^2/R, (\bc[S^*]\otimes \bc^1[\cz])^{\bc^*}).$
By Theorem ~\ref{6.5}, the irreducible $L_P^2$-module
$V_{L_P^2}(\lam)$ appears in $M$ if and only if the following three conditions
are satisfied:

1) $\lam_{n^2+1}\geq 0$, \, $|\lam |=m$,

2) $\exists \,\mu =(\mu_1\geq \cdots \geq \mu_{n^2}\geq 0)$
which interlaces $\lam$, and

3) the irreducible $\GL(S_1)$-module $V_{\GL(S_1)}(\mu)$ occurs in $\bc^1
[\cz]$.

But, $\bc^1[\cz]$ is the irreducible $\GL(S_1)$-module $S^n(S_1)$,
since $\cz$ is a closed $\GL(S_1)$-subvariety of $S^n(S_1^*)$. Thus,
$\mu =(n\geq 0\geq 0\geq \cdots \geq 0)$.  Hence, $V_{L_P^2}(\lam)$
occurs in $M$ if and only if
  \[
\lam=(\lam_1\geq\lam_2 \geq 0 \cdots \geq 0)\,\, \text{with} \,
\lam_1\geq n\geq \lam_2 \,\, \text{and} \, \lam_1 +\lam_2 = m.
  \]
In particular, $M$ is not irrreducible. This proves the proposition.
    \end{proof}

  \begin{corollary} \label{7.2} Let $m\geq 2n$. Then,  $\overline{R\cdot \fp}$ is
  {\em not} normal.
  \end{corollary}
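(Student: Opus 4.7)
The plan is to combine the finite birational morphism $\al$ constructed in Proposition \ref{5.3} with the failure of surjectivity recorded in Proposition \ref{7.1}. Set $W := (S^*\times \cz)//\bc^*$, which is irreducible since both $S^*$ and $\cz$ are. By Proposition \ref{5.3}, $\al : W \to \overline{R\cdot\fp}$ is a finite, surjective, birational morphism between irreducible affine varieties.

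I would argue by contradiction: assume $\overline{R\cdot\fp}$ is normal. Then $\al^*\bc[\overline{R\cdot\fp}] \hookrightarrow \bc[W]$, with $\bc[W]$ integral over $\al^*\bc[\overline{R\cdot\fp}]$ (by finiteness) and contained in the common field of fractions (by birationality). Normality of $\overline{R\cdot\fp}$ therefore forces $\al^*$ to be an isomorphism of coordinate rings. Unpacking the $\bc^*$-action \eqref{e5.1}, one has
\[
\bc[W] = \bigl(\bc[S^*]\otimes \bc[\cz]\bigr)^{\bc^*} = \bigoplus_{d\geq 0} \bigl(\bc[S^*]\otimes \bc^d[\cz]\bigr)^{\bc^*} = \bigoplus_{d\geq 0} S^{d(m-n)}(S)\otimes \bc^d[\cz],
\]
and the formula $[\lambda,f]\mapsto \bar\lambda^{m-n}f$ sends a degree-$d$ function on $\overline{R\cdot\fp}\subset Q$ to a function of bidegree $(d(m-n),d)$ on $S^*\times\cz$. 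Hence the grading on $\bc[\overline{R\cdot\fp}]$ matches the decomposition of $\bc[W]$, and $\al^*$ restricts in each degree to an $R$-module isomorphism $\bc^d[\overline{R\cdot\fp}]\simto (\bc[S^*]\otimes \bc^d[\cz])^{\bc^*}$.

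Now apply the induction functor $H^0(L_P^2/R,-)$ from $R$-modules to $L_P^2$-modules. Since functors preserve isomorphisms, we would obtain
\[
H^0\bigl(L_P^2/R,\,\bc^d[\overline{R\cdot\fp}]\bigr)\;\simto\; H^0\bigl(L_P^2/R,\,(\bc[S^*]\otimes\bc^d[\cz])^{\bc^*}\bigr)
\]
for every $d\geq 0$, contradicting Proposition \ref{7.1}, which exhibits a failure of surjectivity in degree $d=1$ whenever $m\geq 2n$. This contradiction shows $\overline{R\cdot\fp}$ is not normal. The substantive content of the argument has already been absorbed into Propositions \ref{5.3} and \ref{7.1}; the only remaining subtlety is the passage from the non-isomorphism at the level of induced $L_P^2$-modules to the non-isomorphism of the original $R$-modules, which is the elementary observation that a functor sends isomorphisms to isomorphisms.
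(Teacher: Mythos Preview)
Your argument is correct and follows essentially the same route as the paper: assume normality, use Proposition~\ref{5.3} to conclude that $\al^*$ is an isomorphism of graded $R$-algebras (the paper invokes Zariski's main theorem, while you unpack the equivalent integral-closure argument), and then derive a contradiction with Proposition~\ref{7.1}. Your added remark that applying the functor $H^0(L_P^2/R,-)$ transports the degree-one isomorphism of $R$-modules to the level where Proposition~\ref{7.1} is stated is exactly the implicit step in the paper's one-line deduction.
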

\begin{proof} If $\overline{R\cdot \fp}$ were normal, by
  the original form of the Zariski's main theorem
  (cf. [M, Chap. III, $\S$9]) and Proposition ~\ref{5.3}
  (following its notation),
    \[
\al^*:\bc [\overline{R\cdot \fp}] \to
\bc [(S^*\times \cz)//\bc^*]
  \]
would be an isomorphism.  In particular, we would get the $R$-module
ismorphism
  \[
\al^*:\bc^1[\overline{R\cdot \fp}] \simto (\bc[ S^*]\otimes
\bc^1[\cz])^{\bc^*} .
  \]
But this contradicts Proposition ~\ref{7.1}.
  \end{proof}
The following corollary follows similarly.
  \begin{corollary} \label{7.5} Let $m\geq 2n$. Then,  $\overline{L_P^2\cdot \fp}$ is
  {\em not} normal.
  \end{corollary}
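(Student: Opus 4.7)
The plan is to mimic the proof of Corollary~\ref{7.2}, this time using the partial desingularization $\mathcal{D} := L_P^2 \times_R \bigl((S^*\times \cz)//\bc^*\bigr)$ of $\overline{L_P^2\cdot \fp}$. Concretely, I would consider the composite morphism
\[\gamma: \mathcal{D} \xrightarrow{1\times \al} L_P^2 \times_R \overline{R\cdot \fp} \xrightarrow{\beta} \overline{L_P^2\cdot \fp}.\]
By Proposition~\ref{5.3}, $\al$ is proper and birational onto $\overline{R\cdot \fp}$, so $1\times \al$ is proper and birational; by Lemma~\ref{5.2}, $\beta$ is proper and restricts to the biregular isomorphism $\beta_o$ onto the open orbit $L_P^2\cdot \fp$, so $\beta$ is birational as well. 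Hence $\gamma$ is a proper birational morphism onto $\overline{L_P^2\cdot \fp}$.

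Assume for contradiction that $\overline{L_P^2\cdot \fp}$ is normal. Then by the original form of Zariski's main theorem (as cited in the proof of Corollary~\ref{7.2}), the pullback
\[\gamma^* : \bc[\overline{L_P^2\cdot \fp}] \to \bc[\mathcal{D}]\]
would be an isomorphism of graded $L_P^2$-algebras. Taking degree~$1$ parts and factoring $\gamma^*=(1\times \al)^*\circ \beta^*$, we would obtain a chain of $L_P^2$-equivariant injective maps
\[\bc^1[\overline{L_P^2\cdot \fp}] \xrightarrow{\beta^*} H^0\bigl(L_P^2/R,\, \bc^1[\overline{R\cdot \fp}]\bigr) \xrightarrow{(1\times \al)^*} H^0\bigl(L_P^2/R,\, (\bc[S^*]\otimes \bc^1[\cz])^{\bc^*}\bigr)\]
whose composite is an isomorphism. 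In particular, $(1\times \al)^*$ would have to be surjective in degree~$1$.

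But this directly contradicts Proposition~\ref{7.1}, which asserts precisely that this second inclusion is strict for all $m\geq 2n$ in degree $d=1$. The main obstacle is verifying that the composite $\gamma$ is proper and birational so that Zariski's main theorem applies cleanly; this requires combining Lemma~\ref{5.2} with Proposition~\ref{5.3} and is the only substantive point beyond the pattern of Corollary~\ref{7.2}, the rest of the argument being a direct analogue.
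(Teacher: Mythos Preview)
Your proposal is correct and follows essentially the same approach as the paper's own proof. The paper likewise considers the composite $\beta\circ(\text{Id}\times\al)$, argues it is proper and birational, and derives from Zariski's main theorem that $(\text{Id}\times\al)^*$ would be an isomorphism on $H^0(L_P^2/R,-)$, contradicting Proposition~\ref{7.1}; the only cosmetic difference is that the paper applies Zariski separately to $\beta$ and to the composite, whereas you apply it once to the composite and use injectivity of the two factors to reach the same conclusion.
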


  \begin{proof}  By Definition ~\ref{5.1} and Lemma ~\ref{5.2}, we have
  the proper, surjective, birational morphism
  \[
\beta: L_P^2 \times_R (\overline{R\cdot \fp}) \to
\overline{L_P^2\cdot \fp}.
  \]
If $\overline{L_P^2\cdot \fp}$ were normal, both the maps $\beta$
and the
  composite map $\beta\circ (\text{Id}\times \al)$ (which are both proper and
  birational morphisms)
  \[
L_P^2 \times_R \bigl( (S^*\times \cz)//\bc^*\bigr)
\overset{\text{Id}\times \al}{\longrightarrow} L_P^2
\times_R (\overline{R\cdot \fp})\overset{\beta}{\twoheadrightarrow}
 \overline{L_P^2 \cdot \fp}
  \]
  would induce isomorphisms (via the Zariski's main theorem
  [H, Chap. III, Corollary 11.4 and its proof])
\[
\beta^*:\bc \bigl[ \overline{L_P^2 \cdot \fp}\bigr] \simto
H^0\bigl( L_P^2/R, \bc [\overline{R\cdot \fp}]\bigr)\]
and
\[\bigl(\beta\circ (\text{Id}\times \al)\bigr)^*:\bc \bigl[ \overline{L_P^2 \cdot \fp}\bigr]
\simto H^0\bigl( L_P^2/R, \bc\bigl[S^*\times \cz\bigr]^{\bc^*}
\bigr).
  \]
In particular, the canonical map
\[(\text{Id}\times \al)^*:H^0(L_P^2/R, \bc
\bigl[ \overline{R\cdot \fp}\bigr]) \simto H^0\bigl( L_P^2/R,
\bc\bigl[S^*\times \cz\bigr]^{\bc^*} \bigr)
  \]
  would be an isomorphism. This contradicts Proposition ~\ref{7.1}.
   Hence $\overline{L_P^2\cdot
\fp}$ is not normal.
  \end{proof}

  \begin{theorem}  \label{7.4} Let $m\geq 2n$. Then, $\overline{{G}\cdot \fp}$ is {\em not} normal.
    \end{theorem}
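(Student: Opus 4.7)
The argument parallels Corollaries ~\ref{7.2} and ~\ref{7.5}, but lifted from the $L_P^2$-level to the $G$-level by induction over the fibration $G/P$. The strategy is to assume $\cy$ is normal and obtain a contradiction from Proposition ~\ref{7.1}.

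First, assemble the proper, surjective morphism
\[
\Phi : G \times_P \bigl(L_P^2 \times_R ((S^*\times \cz)//\bc^*)\bigr) \longrightarrow \cy
\]
as the composition of $\operatorname{Id}_G \times \operatorname{Id}_{L_P^2} \times \al$ (finite, by Proposition ~\ref{5.3}), $\operatorname{Id}_G \times \beta$ (proper, by Definition ~\ref{5.1}, and biregular on open orbits by Lemma ~\ref{5.2}), and $\phi$ (proper, surjective, biregular on open orbits by Corollary ~\ref{4.4}). In particular $\Phi$ is proper and birational. Also consider the intermediate proper, surjective, birational morphism $\Phi_1 := \phi \circ (\operatorname{Id}_G \times \beta)$.

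Now assume $\cy$ is normal. Then Zariski's main theorem (cf.\ [H, Chap.~III, Corollary 11.4]) applied to both $\Phi$ and $\Phi_1$ forces $\Phi^*$ and $\Phi_1^*$ to be isomorphisms on rings of regular functions; consequently $(\operatorname{Id}_G \times \operatorname{Id}_{L_P^2} \times \al)^*$ also induces an isomorphism on coordinate rings. Since $L_P^1$ and $U_P$ act trivially on $\overline{L_P^2\cdot \fp}$ by Lemma ~\ref{4.1} (and the action on the fibered pieces factors through $L_P^2$), the identification $\bc[G \times_P X] \simeq H^0(G/P, \bc[X])$ for affine $P$-varieties $X$ (as in the proof of Theorem ~\ref{4.2}, using [Ku1, Lemma 8]) together with its $L_P^2 \times_R$ analogue yields, upon restricting to degree $1$ in the $\bc^*$-grading inherited from $Q$, an isomorphism of $G$-modules
\[
H^0\bigl(G/P, H^0(L_P^2/R, \bc^1[\overline{R \cdot \fp}])\bigr) \;\simto\; H^0\bigl(G/P, H^0(L_P^2/R, (\bc[S^*]\otimes \bc^1[\cz])^{\bc^*})\bigr).
\]

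By Proposition ~\ref{7.1}, the underlying $L_P^2$-level inclusion is strict; since both sides are semisimple $L_P^2$-modules, the cokernel $C$ is a nonzero direct summand. The calculation in the proof of Proposition ~\ref{7.1} shows $C$ contains $V_{L_P^2}(\lam)$ for each partition $\lam=(\lam_1, \lam_2, 0, \dots, 0)$ with $\lam_1+\lam_2 = m$, $n \leq \lam_1 < m$, and $1 \leq \lam_2 \leq n$; such $\lam$ exist since $m \geq 2n$. Writing $V_{L_P^2}(\lam) \simeq V_{L_P^2}(\lam')^*$ with $\lam'=(0, \dots, 0, -\lam_2, -\lam_1)$, one has $\lam_1'=0\leq 0$, so $\hat{\lam}' \in D(G)$ and, by the Borel--Weil-type identification used in the proof of Theorem ~\ref{4.2}, $H^0(G/P, V_{L_P^2}(\lam')^*) \simeq V_G(\hat{\lam}')^* \neq 0$. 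Therefore $H^0(G/P, C) \neq 0$, contradicting the displayed $G$-module isomorphism. Hence $\cy$ is not normal.

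The main subtlety lies in this last step: the cokernel irreducibles in Proposition ~\ref{7.1} are presented as $V_{L_P^2}(\lam)$ with $\lam_1 > 0$, so a naive attempt to induce them to $G$ would fail the dominance condition of Theorem ~\ref{4.2}. One must first rewrite them in their dual form $V_{L_P^2}(\lam')^*$ with $\lam_1' \leq 0$ in order to place $\hat\lam'$ inside $D(G)$ and invoke [Ku1, Lemma 8], thereby confirming that the $L_P^2$-obstruction of Proposition ~\ref{7.1} survives the $H^0(G/P,-)$ induction.
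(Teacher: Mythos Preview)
Your proof is correct and follows essentially the same route as the paper: construct a proper birational morphism from $G\times_P(L_P^2\times_R((S^*\times\cz)//\bc^*))$ to $\cy$, assume normality, invoke Zariski's main theorem, and derive a contradiction from Proposition~\ref{7.1} after inducing to $G$.

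There is one packaging difference worth noting. You introduce the intermediate morphism $\Phi_1$ and obtain the contradiction by comparing the two induced $G$-modules $H^0(G/P,H^0(L_P^2/R,\bc^1[\overline{R\cdot\fp}]))$ and $H^0(G/P,H^0(L_P^2/R,(\bc[S^*]\otimes\bc^1[\cz])^{\bc^*}))$, then arguing that the $L_P^2$-level cokernel $C$ survives induction. The paper bypasses $\Phi_1$ entirely: it simply observes that $\bc^1[\cy]$, being a nonzero quotient of the irreducible $G$-module $S^m(E)$, is itself irreducible, whereas the induced module on the right is not (it contains $V_G(\hat\lam)$ for every $\lam=(\lam_1,\lam_2,0,\dots,0)$ with $\lam_1\geq n\geq\lam_2$ and $\lam_1+\lam_2=m$). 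This makes the ``subtlety'' you flag disappear: the paper applies [Ku1, Lemma~8] directly in the form $H^0(G/P,V_{L_P^2}(\lam))\simeq V_G(\hat\lam)$ with $\hat\lam$ obtained by \emph{appending} $m^2-n^2-1$ zeroes to $\lam$ (valid since $\lam_{n^2+1}\geq 0$), rather than the dual form from Theorem~\ref{4.2} (zeroes prepended, requiring $\lam_1\leq 0$). Your dualization trick $V_{L_P^2}(\lam)\simeq V_{L_P^2}(\lam')^*$ is correct and yields the same answer, but it is an artifact of routing the computation through Theorem~\ref{4.2} rather than invoking Borel--Weil directly.
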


  \begin{proof}  Recall from Section 4 the proper and surjective morphism
  $\phi : {G} \times_{{P}} (\overline{{P}\cdot \fp}) \twoheadrightarrow \overline{{G}\cdot
  \fp}.$ It is birational by Corollary ~\ref{4.4}.
Consider the projection $\pi:P \to L_P^2,$ obtained by identifying
$L_P^2\simeq P/(U_P\cdot L_P^1)$ and let $P_R$ be the parabolic
subgroup of $P$ defined as $\pi^{-1}(R).$ Now, define the variety
\[Y=P\times_{P_R}\bigl((S^*\times \cz)//{ \bc^*}\bigr),\]
where $P_R$ acts on $(S^*\times \cz)//{\bc^*}$ via its projection
onto $R$. Consider the morphism
\[\al_P:Y\to \overline{P\cdot \fp}= \overline{L_P^2\cdot
\fp},\,\,[p,x]\mapsto p\cdot\al(x),\] for $p\in P$ and $x\in
(S^*\times \cz)//{\bc^*}.$ Observe that, under the canonical
identification (induced from the map $\pi$)
$L_P^2\times_{R}\bigl((S^*\times \cz)//{\bc^*}\bigr)\simeq Y$, the
map $\al_P$ is nothing but the composite map $\beta\circ
(\text{Id}\times \al)$ (cf., the proof of Corollary ~\ref{7.5}).
Hence, $\al_P$ is a proper, birational morphism. The $P$-morphism
$\al_P$ of course gives rise to a proper, birational $G$-morphism
\[\bar{\al}_P:G\times_P Y\to G\times_P (\overline{P\cdot \fp}).\]
Finally, define the proper, birational, surjective $G$-morphism as
the composite
\[\hat{\al}_P:=\phi\circ \bar{\al}_P:G\times_P Y\to  \overline{G\cdot \fp}.\]
If $ \overline{G\cdot \fp}$ were normal, we would get an isomorphism
\[{\hat{\al}_P}^*:\bc[\overline{G\cdot \fp}]\to \bc[G\times_P Y]\simeq H^0(G/P,
H^0(L_P^2/R, \bc[S^*\times \cz]^{\bc^*})),  \] where $P$ acts on
$H^0(L_P^2/R, \bc[S^*\times \cz]^{\bc^*})$ via its projection $\pi$.
It is easy to see that this, in particular, would induce an
isomorphism
\beqn \label{e7.6} \bc^1[\overline{G\cdot \fp}]\simeq
H^0(G/P, H^0(L_P^2/R, \bigl(\bc[S^*]\otimes
\bc^1[\cz]\bigr)^{\bc^*})). \eeqn
Now, by the proof of Proposition
~\ref{7.1}, there exists $k_\lam>0$ such that
\begin{align*}
H^0(& G/P, H^0(L_P^2/R, \bigl(\bc[S^*]\otimes
\bc^1[\cz]\bigr)^{\bc^*}))\\
&\simeq \oplus_{\lam=(\lam_1\geq \lam_2\geq 0\geq\cdots \geq 0)\in
D(L_P^2):\lam_1\geq n\geq \lam_2, \lam_1+\lam_2=m}\,\,
k_\lam H^0(G/P, V_{L_P^2}(\lam))\\
&\simeq \oplus_{\hat{\lam}=(\lam_1\geq \lam_2\geq 0\geq\cdots \geq
0)\in D(G):\lam_1\geq n\geq \lam_2,\lam_1+\lam_2=m}\,\, k_\lam
V_{G}(\hat{\lam}),\,\,\text{by [Ku1, Lemma 8]},
\end{align*}
 where $\hat{\lambda}$ is obtained from $\lambda$ by adding
 $m^2-n^2-1$ zeroes in the end to $\lambda$. In particular, $H^0(G/P, H^0(L_P^2/R, \bigl(\bc[S^*]\otimes
\bc^1[\cz]\bigr)^{\bc^*}))$ is not an irreducible $G$-module.

Finally, $\bc^1[\overline{G\cdot\fp}]$ is, by definition, a
$G$-module quotient of the irreducible $G$-module $Q^*\simeq S^m(E)$.
Clearly,  $\bc^1[\overline{G\cdot\fp}]$ is nonzero and hence
\[\bc^1[\overline{G\cdot\fp}]\simeq S^m(E).\]
This contradicts \eqref{e7.6} and hence the theorem is proved.
  \end{proof}

\bibliographystyle{plain}
\def\noopsort#1{}

\vskip6ex

\noindent Address:
 Department of Mathematics, University of North Carolina, Chapel Hill, NC 27599-3250, USA\\
\noindent(email: shrawan@email.unc.edu)

  \end{document}